\documentclass[11pt, reqno, english]{amsart}  
\usepackage[utf8]{inputenc}
\usepackage[T1]{fontenc}
\usepackage{amsmath,amsthm}
\usepackage{amsfonts,amssymb}
\usepackage{url}
\usepackage{mathtools}  
\usepackage[colorlinks=true,urlcolor=blue,linkcolor=red,citecolor=magenta]{hyperref}
\usepackage{enumerate, paralist}
\usepackage{tikz-cd}
\usepackage[left=1in,right=1in,top=1in,bottom=1in]{geometry}
\numberwithin{equation}{section}

\theoremstyle{plain}
\newtheorem{theorem}{Theorem}[section]
%[section]
\newtheorem{corollary}[theorem]{Corollary}
\newtheorem{proposition}[theorem]{Proposition}

\newtheorem{question}[theorem]{Question}

\theoremstyle{definition}

\newcommand{\R}{\mathbb{R}}

\newcommand{\Z}{\mathbb{Z}}
\newcommand{\F}{\mathcal{F}}
\newcommand{\KG}{\mathrm{KG}}

\linespread{1.135}

%-------------------------------------------------------------------%
\begin{document}
% -------------------------------------------------------------------%

\title[The Generalized Makeev Problem Revisited]{The Generalized Makeev Problem Revisited}

% -------------------------------------------------------------------%

% -------------------------------------------------------------------%

\author{Andres Mejia}
\address[AM]{Dept.\ Math.\, University of Pennsylvania, Philadelphia, PA 19104, USA}
\email{amejia83@sas.upenn.edu}

\author{Steven Simon}
\address[SS]{Dept.\ Math.\, Bard College, Annandale-on-Hudson, NY 12504, USA}
\email{ssimon@bard.edu} 

\author{Jialin Zhang}
\address[JZ]{Dept.\ Math. and Stat.\, Boston University, Boston, MA 02215, USA}
\email{zjleric@bu.edu}

% -------------------------------------------------------------------%

\begin{abstract} 
\small 

Based on a result of Makeev, in 2012 Blagojevi\'c and Karasev proposed the following problem: given any positive integers $m$ and $1\leq \ell\leq k$, find the minimum dimension $d=\Delta(m;\ell/k)$ such that for any $m$ mass distributions on $\R^d$, there exist $k$ hyperplanes, any $\ell$ of which equipartition each mass. The $\ell=k$ case is a central question in geometric and topological combinatorics which remains open except for few values of $m$ and $k$. For $\ell< k$ and arbitrary $m$, we establish new upper bounds on $\Delta(m;\ell/k)$ when (1) $\ell=2$ and $k$ is arbitrary and (2) $\ell=3$ and $k=4$. When $\ell=k-1$ and $m+1$ is a power of two these bounds are nearly optimal and are exponentially smaller than the current best upper bounds when $\ell=k$. Similar remarks apply to our upper bounds when the hyperplanes are prescribed to be pairwise orthogonal. Lastly, we provide transversal extensions of our results along the lines recently established by Frick et al.: given $m$ families of compact convex sets in $\R^d$ such that no $2^\ell$ members of any family are pairwise disjoint, we show that every member of each family is pierced by the union of any $\ell$ of some collection of $k$ hyperplanes.
\end{abstract}

\date{}
\maketitle

\section{Introduction and Statement of Main Results}

\subsection{A Brief History of Hyperplane Mass Equipartition Problems}  Measure equipartition problems concern the following general situation: given a family of finite measures on $\R^d$ which are absolutely continuous with respect to Lebesgue measure (henceforth to be called \textit{masses}), one seeks a division of $\R^d$ by prescribed interior disjoint (and typically convex) regions so that each region contains an equal fraction of each total mass. Beginning with the classical Ham Sandwich theorem~\cite{BZ04} (any $d$ masses on $\R^d$ can be equipartitioned by the two half-spaces determined by a single hyperplane) via the Borsuk--Ulam theorem, in recent years these problems have come to occupy a central position in geometric combinatorics with results almost invariably obtained by the application of equivariant topological machinery (see, e.g. ~\cite{Ma08, Zi17} and the recent survey ~\cite{RS22}). 

Of the mass equipartition problems, none has been so well studied as that which seeks extensions of the Ham Sandwich theorem to the multiple hyperplane setting (see, e.g., ~\cite{Av84, BFHZ16, BFHZ18, Gr60, Ha66, Kl21, MLVZ06, Ra96, YDEP89, Zi15}). This question dates back to the work of Gr\"unbaum~\cite{Gr60} and Hadwiger~\cite{Ha66} and was first studied in its current form by Ramos~\cite{Ra96}. To set notation, let $\{H^+,H^-\}$ denote the closed half-spaces determined by a hyperplane $H$ in $\R^d$. Thus any collection of $k$ hyperplanes $H_1,\ldots, H_k$ in $\R^d$ determines $2^k$ (not necessarily distinct or non-empty) regions $H_1^\pm \cap \cdots \cap H_k^\pm$, and we say that the $k$ hyperplanes equipartition a collection of masses if the resulting regions do (in which case the hyperplanes are necessarily distinct, and in fact affinely independent). 

\begin{question}[Gr\"unbaum--Hadwiger--Ramos Problem] 
\label{quest:G-H-R} 
Let $m\geq 1$ and $k\geq 1$ be integers. What is the minimum dimension $d:=\Delta(m;k)$ such that any $m$ masses on $\R^d$ can be equipartitioned by $k$ hyperplanes? 
\end{question}

A degrees of freedom count due to Avis~\cite{Av84} and Ramos~\cite{Ra96} gives  \begin{equation}
\label{eqn:Ramos} 
\Delta(m;k)\geq \left\lceil\frac{m(2^k-1)}{k}\right\rceil,
\end{equation} and this lower bound  is conjectured  to be tight in all cases. Besides the $k=1$ case (i.e., the Ham Sandwich theorem), however, exact bounds on $\Delta(m;k)$, all of which match the conjectured value, are currently known in only the following cases:
\begin{itemize}
\item for $k=2$, for all $m=2^r+s$, where $r\geq 1$ and $s\in\{-1,0,1\}$~\cite{MLVZ06, BFHZ16, BFHZ18}, and 
\item for $k=3$, if $m\in \{1,2,4\}$~\cite{Ha66, BFHZ16, BFHZ18}.
\end{itemize}

 The best general upper bound is due to ~\cite{MLVZ06}. Writing $m=2^{q+1}-t$ for $q\geq 0$ and $1\leq t\leq 2^q$, one has 
\begin{equation}
\label{eqn:M-L-Z} \Delta(2^{q+1}-t;k)\leq 2^q\cdot (2^{k-1}+1)-t. 
\end{equation}
For multiple hyperplanes, this upper bound matches the Ramos lower bound precisely when $k=2$ and $m+1$ is a power of 2 and deviates exponentially from this bound in both parameters $k$ and $q$.

Inspired by a result of Makeev~\cite{Ma07} showing that a single mass in $\R^d$ can be equipartitioned by any pair of some collection of $d$ pairwise orthogonal hyperplanes, Blagojevi\'c and Karasev ~\cite{BK12} posed a fractional generalization of Question~\ref{quest:G-H-R}:

 \begin{question}[Generalized Makeev Problem] 
\label{quest:generalized Makeev}   
Let $m\geq 1$ and $1\leq \ell \leq k$ be integers. What is the minimum dimension $d:=\Delta(m;\ell/k)$ (respectively, $d=\Delta^\perp(m;\ell/k))$ such that for any $m$ masses on $\R^d$, there exist $k$ (pairwise orthogonal) hyperplanes, any $\ell$ of which equipartition each mass? 
 \end{question}

 The main result of ~\cite{BK12} gave polynomial criteria for upper bounds to Question~\ref{quest:generalized Makeev}, which, as with the upper bound ~(\ref{eqn:M-L-Z}), arise from ideal--valued cohomological index theory ~\cite{FH98} applied to the group $\mathbb{Z}_2^k$. In the non-orthogonal setting this technique resulted in the rough upper bound $\Delta(m;\ell/k)\leq m \left(\sum_{j=0}^\ell \binom{k-1}{j}\right)$, while for $\Delta^\perp(m;\ell/k)$ no upper bound was obtained; see Section~\ref{sec:polynomial} for more details. 
 
\subsection{New Upper Bounds for the Generalized Makeev Problem}

Our paper gives improved upper bounds for Question~\ref{quest:generalized Makeev} when $\ell=2$ and $k$ is arbitrary, as well as when $\ell=3$ and $k=4$:

\begin{theorem}
\label{thm:upper}
Let $q\geq 0$ and $1\leq t\leq 2^q$ be integers.
\begin{compactenum}[(a)]
\item $\Delta(2^{q+1}-t;2/k)\leq 2^q\cdot (k+1)-t$ and
\item $\Delta(2^{q+1}-t;3/4)\leq 7\cdot 2^q -2t$
\end{compactenum} 
\end{theorem}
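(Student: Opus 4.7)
My plan is to apply the equivariant configuration space/test map paradigm to the group $G=\mathbb{Z}_2^k$, acting by sign-flips on the $2^k$ orthants determined by $k$ oriented hyperplanes. I would take the configuration space to be $X=(S^d)^k$, where each factor parametrizes an oriented affine hyperplane in $\R^d$ via a standard projective embedding $\R^d\hookrightarrow S^d$, with the $i$-th copy of $\mathbb{Z}_2$ acting antipodally on the $i$-th factor. Its mod-2 Borel cohomology is
\[
H_G^*(X;\mathbb{F}_2) \;\cong\; \mathbb{F}_2[t_1,\ldots,t_k]\big/\bigl(t_1^{d+1},\ldots,t_k^{d+1}\bigr),
\]
with $t_i$ the Euler class of the line bundle associated to the character $\chi_{\{i\}}$.

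For each mass $\mu_j$ and each nonempty $S\subseteq [k]$ with $|S|\leq\ell$, I would associate the continuous function on $X$ extracting the $\chi_S$-Fourier coefficient of the distribution of $\mu_j$ across the $2^{|S|}$ regions cut out by $\{H_i: i\in S\}$. The simultaneous vanishing of all such functions is precisely the required $\ell$-fractional equipartition, so assembling produces a $G$-equivariant test map $f\colon X \to V_{\ell,k}^{\oplus m}$ with $V_{\ell,k} = \bigoplus_{\varnothing \neq S \subseteq [k],\, |S|\leq\ell} \chi_S$. By standard obstruction theory, a zero of $f$ exists whenever the mod-2 Euler class
\[
e_{\ell,k,m} \;=\; \Bigl(\prod_{\varnothing\neq S\subseteq[k],\, |S|\leq\ell} \sum_{i\in S}t_i\Bigr)^m
\]
is nonzero in $H_G^*(X;\mathbb{F}_2)$. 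For part (a) this specializes to $\bigl(\prod_i t_i \cdot \prod_{i<j}(t_i+t_j)\bigr)^m$, and for part (b) it includes the additional triple-sum factor $\prod_{i<j<r}(t_i+t_j+t_r)^m$.

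The proof then reduces to an algebraic verification that $e_{\ell,k,m}\not\equiv 0$ modulo $(t_1^{d+1},\ldots,t_k^{d+1})$ at the claimed values of $d$. The crucial tool is Frobenius over $\mathbb{F}_2$, which gives $\bigl(\sum_{i\in S}t_i\bigr)^{2^{q+1}} = \sum_{i\in S}t_i^{2^{q+1}}$: using the factorization $e_{\ell,k,1}^{2^{q+1}} = e_{\ell,k,m}\cdot e_{\ell,k,1}^t$ and substituting Frobenius coordinates $u_i=t_i^{2^{q+1}}$, the factor $\prod_{i<j}(t_i+t_j)^{2^{q+1}}$ becomes $\prod_{i<j}(u_i+u_j)$, a sum of monomials indexed by tournaments on $[k]$. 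Combined with $\prod_i t_i^{2^{q+1}}$, this yields clean leading monomials in $e_{\ell,k,1}^{2^{q+1}}$, and the $-t$ correction must be carefully absorbed by pulling out $t$ factors of $e_{\ell,k,1}$. The main obstacle is to exhibit, for each $k$ and each $t\in[1,2^q]$, a specific monomial in $e_{\ell,k,m}$ whose coordinate-wise degree is at most $d = 2^q(k+1)-t$ for part (a) (respectively $d = 7\cdot 2^q-2t$ for part (b)); this amounts to a combinatorial parity count of tournaments, which for part (b) requires a more delicate joint count involving the extra triple-sum factor that exploits the smallness of $k=4$. As a sanity check, for $k=3, m=2^q$ (so $t=2^q$) one computes $\prod_{i<j}(u_i+u_j)=\sum_{i\neq j}u_i^2u_j$, yielding leading monomials of $e_{2,3,2^q}$ with maximum degree $3\cdot 2^q$, which matches $2^q(k+1)-t = 3\cdot 2^q$; the general case, especially part (b), is where the real work lies.
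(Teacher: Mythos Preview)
Your configuration space, test map, and reduction to the nonvanishing of the Euler class $p_{\ell,k}^m$ in $\mathbb{F}_2[t_1,\dots,t_k]/(t_1^{d+1},\dots,t_k^{d+1})$ is exactly the paper's framework. Where your plan diverges is in the algebraic verification, and there are two points worth flagging.

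First, the factorization you propose, $e_{\ell,k,1}^{2^{q+1}}=e_{\ell,k,m}\cdot e_{\ell,k,1}^{t}$, does not directly exhibit a surviving monomial of $e_{\ell,k,m}$: you cannot ``divide out'' the $e^t$ factor, and the monomials of $e^{2^{q+1}}$ all have exponents divisible by $2^{q+1}$, hence generically miss the target degree $d=2^q(k+1)-t$. The paper instead uses the multiplicative splitting $p_{\ell,k}^m=p_{\ell,k}^{2^q}\cdot p_{\ell,k}^{2^q-t}$ (so Frobenius is applied at exponent $2^q$, not $2^{q+1}$), and then, rather than hunting for a monomial by parity-counting tournaments, it \emph{augments} the target representation with carefully chosen extra summands (partial orthogonality constraints $U(\mathcal{S})$, and equipartition conditions on fewer hyperplanes for $t-1$ auxiliary masses) so that the \emph{total} Euler class $p_U$ becomes exactly $t_1^d\cdots t_k^d$. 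The verification is then a routine cascade of degree-forcing arguments. This buys two things: the computation becomes mechanical, and the augmented theorems (Theorems~\ref{thm:2} and~\ref{thm:3}) are strictly stronger than the bare bounds you are after.

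Second, for part~(b) the paper avoids any ``delicate joint count involving the extra triple-sum factor.'' The key step is an explicit expansion of $p_{3,4}$ into four $\mathfrak{S}_4$-symmetrized monomial sums, after which one observes that in $p_{3,4}^{2^q}$ only the summand $\bigl(\sum_\sigma t_{\sigma(1)}^5t_{\sigma(2)}^4t_{\sigma(3)}^3t_{\sigma(4)}^2\bigr)^{2^q}$ survives modulo $(t_i^{d+1})$, since every other summand has an exponent $\ge 6$ and $6\cdot 2^q>d$. This gives $p_{3,4}^m\equiv (t_1t_2t_3t_4)^m\, p_{2,4}^m$ in the quotient, collapsing part~(b) to the $k=4$ case of part~(a). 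Your tournament approach could in principle be made to work, but this reduction is the insight that makes part~(b) tractable.
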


 We observe that the upper bounds of ~\cite{BK12} above give $\Delta(2^{q+1}-t;2/k)\leq (2^{q+1}-t)\cdot (1+\binom{k}{2})$ and $\Delta(2^{q+1}-t;3/4)\leq 7\cdot (2^{q+1}-t)$. The upper bounds Theorem~\ref{thm:upper} are nearly tight when $m+1$ is a power of two and $\ell=k-1$:

\begin{corollary}
\label{cor:explicit} Let $q\geq 0$ be an integer. \begin{compactenum}[(a)]
\item $4\cdot 2^q-2\leq \Delta(2^{q+1}-1;2/3) \leq 4\cdot 2^q-1$ 
\item $7\cdot 2^q-3\leq \Delta(2^{q+1}-1;3/4) \leq 7\cdot 2^q-2$
\end{compactenum}
\end{corollary}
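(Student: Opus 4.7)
My plan: the upper bounds in both (a) and (b) are immediate specializations of Theorem~\ref{thm:upper}. Setting $t=1$ and $k=3$ in part (a) gives $\Delta(2^{q+1}-1; 2/3) \leq 2^q\cdot 4 - 1 = 4\cdot 2^q - 1$, and setting $t=1$ in part (b) gives $\Delta(2^{q+1}-1; 3/4) \leq 7\cdot 2^q - 2$, so no further work is needed there.

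For the lower bounds, the plan is a Ramos-style degrees-of-freedom count adapted to the fractional setting via Fourier analysis on $\mathbb{Z}_2^k$. Given $m$ masses $\mu_1,\ldots,\mu_m$ on $\R^d$ and hyperplanes $H_1,\ldots,H_k$, let $f_i\colon \{+,-\}^k\to \R$ record the normalized $\mu_i$-measures of the $2^k$ orthants $H_1^{\epsilon_1}\cap\cdots\cap H_k^{\epsilon_k}$. Expanding $f_i$ in the character basis of $\mathbb{Z}_2^k$, the statement that a subset $S\subseteq [k]$ of $\ell$ hyperplanes equipartitions $\mu_i$ is equivalent to $\widehat{f}_i(T)=0$ for every non-empty $T\subseteq S$. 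Ranging $S$ over all $\binom{k}{\ell}$ subsets of size $\ell$ collapses the full equipartition requirement to
\[
\widehat{f}_i(T)=0 \quad\text{for every non-empty } T\subseteq [k] \text{ with } |T|\leq \ell,
\]
contributing exactly $N_{k,\ell}:=\sum_{j=1}^{\ell}\binom{k}{j}$ constraints per mass. Since the space of $k$-tuples of hyperplanes in $\R^d$ is $kd$-dimensional, a generic configuration of $m$ masses can be equipartitioned only if $kd\geq m\,N_{k,\ell}$, i.e.
\[
d \;\geq\; \left\lceil \frac{m\, N_{k,\ell}}{k}\right\rceil.
\]
Plugging in $N_{3,2}=6$, $N_{4,3}=14$, and $m=2^{q+1}-1$ yields $d\geq 2(2^{q+1}-1)=4\cdot 2^q-2$ in case (a) and $d\geq \lceil 7(2^{q+1}-1)/2\rceil = 7\cdot 2^q-3$ in case (b). As a sanity check, specializing to $\ell=k$ recovers the Avis--Ramos bound~(\ref{eqn:Ramos}) since $N_{k,k}=2^k-1$.

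The main obstacle is upgrading the parameter count from a necessary condition to an actual lower bound: this requires exhibiting explicit masses for which the $m\cdot N_{k,\ell}$ Fourier constraints are genuinely independent. The natural construction is absolutely continuous smoothings of sums of Dirac measures supported near sufficiently generic points (e.g.\ on a moment-curve-type configuration), for which one shows that the Jacobian of the constraint map $(H_1,\ldots,H_k)\mapsto(\widehat{f}_i(T))_{i,T}$ has full rank at a carefully chosen arrangement. This independence verification is the same nontrivial step underlying the classical lower bound~(\ref{eqn:Ramos}); the novelty here lies only in identifying the correct constraint count $N_{k,\ell}$ for the fractional equipartition problem.
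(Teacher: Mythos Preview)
Your upper bounds are correct and match the paper: both are the $t=1$ specializations of Theorem~\ref{thm:upper}.

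For the lower bounds, however, your plan has a genuine gap. The ``independence verification'' you defer is the entire substance of the argument, and your claim that it is ``the same nontrivial step'' as in the Avis--Ramos bound~(\ref{eqn:Ramos}) is not accurate: those proofs are explicit combinatorial constructions (moment-curve point configurations with direct counting), not Jacobian or transversality arguments, and nothing in them transfers automatically to the $\ell<k$ setting. The paper sidesteps this issue entirely with a short, self-contained incidence count (Proposition~\ref{prop:lower}): put each $\mu_i$ on $2^\ell-1$ tiny balls centered at points in general position; each ball has mass $1/(2^\ell-1)>1/2^\ell$, so for any choice of $\ell$ of the $k$ equipartitioning hyperplanes at least one of them must meet every ball, forcing each ball to be hit by at least $k-\ell+1$ of the $k$ hyperplanes; since no hyperplane meets more than $d$ balls, one gets $kd\geq m(2^\ell-1)(k-\ell+1)$. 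This is in general weaker than your predicted bound $kd\geq mN_{k,\ell}$, but for $\ell=k-1$ one has $(2^{k-1}-1)\cdot 2=2^k-2=N_{k,k-1}$, so the two coincide and both parts of the corollary follow without any genericity argument.
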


For context, we compare Corollary~\ref{cor:explicit} to the current known results for $\Delta(2^{q+1}-1;k)$ and $k\in\{3,4\}$. While $\Delta(1;2/3)\leq 3$ and $\Delta(1;3/4)\leq 5$ are immediate consequences of  $\Delta(1;3)=3$~\cite{Ha66} and $4\leq \Delta(1;4)\leq 5$~\cite{BFHZ18}, respectively, the best upper bounds for $q\geq 1$ remain $\Delta(2^{q+1}-1;3)\leq 5\cdot 2^q -1$ and $\Delta(2^{q+1}-1;4)\leq 9\cdot 2^q -1$ given by (\ref{eqn:M-L-Z}). Thus the upper bounds of Theorem~\ref{cor:explicit} represents an exponential improvement over those for Question~\ref{quest:G-H-R}, despite there being one degree of freedom more per mass. For example, Theorem~\ref{cor:explicit} gives $11\leq \Delta(3;3/4)\leq 12$ while $12\leq \Delta(3;4)\leq 17$.

 While Theorem~\ref{cor:explicit} ultimately relies on standard topological schemes used previously, we are nonetheless able to obtain new upper bounds to Question~\ref{quest:generalized Makeev} by a careful analysis of the corresponding polynomials.  In particular, this allows us to impose further constraints on the equipartitioning hyperplanes in the same dimensions given by our upper bounds; see~\cite{Si19} for similar strengthening of the bound~(\ref{eqn:M-L-Z}) in the context of Question~\ref{quest:G-H-R}. For example, when $q\geq 1$ and $2\leq t\leq 2^q$ one may guarantee pairwise orthogonality of the equipartitioning hyperplanes, thereby yielding explicit upper bounds for the orthogonal version of Question~\ref{quest:generalized Makeev}:

\begin{theorem} 
\label{thm:upper perp} 
Let $q\geq 0$ and $1\leq t\leq 2^q$ be integers.
\begin{compactenum}[(a)]
\item $\Delta^\perp(2^{q+1}-t;2/k)\leq 2^q\cdot (k+1) -t$ and
\item $\Delta^\perp(2^{q+1}-t;3/4)\leq 7\cdot 2^q -2t$
\end{compactenum} 
\end{theorem}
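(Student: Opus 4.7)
My plan is to prove Theorem~\ref{thm:upper perp} by running the same topological scheme used for Theorem~\ref{thm:upper} while simultaneously enforcing orthogonality of the normal vectors through an enlarged equivariant test map. Parametrize each oriented hyperplane in $\R^d$ by a unit vector $v_i = (u_i, a_i) \in S^d \subset \R^d \oplus \R$ via $H_i = \{x \in \R^d : u_i \cdot x = a_i\}$. The proof of Theorem~\ref{thm:upper} produces a $\mathbb{Z}_2^k$-equivariant test map $\Phi : (S^d)^k \to V$ whose zeros correspond to the desired $\ell$-wise equipartitioning configurations. I would couple $\Phi$ with the orthogonality map $\Psi : (S^d)^k \to W$, where
\[
W = \bigoplus_{1 \leq i < j \leq k} \R_{ij},
\]
each $\R_{ij}$ being the one-dimensional $\mathbb{Z}_2^k$-representation on which $(\epsilon_1,\ldots,\epsilon_k)$ acts by $\epsilon_i \epsilon_j$, and $\Psi_{ij}(v_1,\ldots,v_k) = u_i \cdot u_j$. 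A common zero of $(\Phi,\Psi)$ is a configuration of $k$ pairwise orthogonal hyperplanes with the prescribed equipartition property.

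By the standard parametrized Borsuk--Ulam obstruction argument, such a zero exists provided that the $\mathbb{Z}_2^k$-equivariant modulo-$2$ Euler class factors non-trivially:
\[
e(V \oplus W) \;=\; e(V) \cdot e(W) \;\neq\; 0 \quad \text{in} \quad H^*_{\mathbb{Z}_2^k}\bigl((S^d)^k;\mathbb{F}_2\bigr) \;\cong\; \mathbb{F}_2[x_1,\ldots,x_k]\big/\bigl(x_1^{d+1},\ldots,x_k^{d+1}\bigr).
\]
A direct computation yields $e(W) = \prod_{1 \leq i < j \leq k}(x_i + x_j)$, the modulo-$2$ Vandermonde of total degree $\binom{k}{2}$ with individual-variable degree $k-1$ in each $x_i$; meanwhile $e(V)$ is the polynomial identified in the proof of Theorem~\ref{thm:upper} whose non-vanishing establishes the upper bound on $\Delta(m;\ell/k)$.

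The crux of the proof is therefore the polynomial non-vanishing $e(V) \cdot e(W) \neq 0$ in the truncated ring. Because the $\binom{k}{\ell}$ equipartition conditions per mass are not mutually independent, a naive product of per-$\ell$-subset Euler factors for $e(V)$ can collapse to zero upon multiplication by $e(W)$ through $\mathbb{F}_2$ cancellations; one must instead work with the refined representative of $e(V)$ afforded by the careful polynomial analysis underlying the proof of Theorem~\ref{thm:upper}. I expect the main obstacle to be the bookkeeping of individual-variable degrees in this refined representative: one needs to show that $e(V)$ has degree at most $d - (k - 1)$ in each $x_i$ and that its top-degree monomial survives multiplication by the Vandermonde $e(W)$, precisely for the dimensions $d = 2^q(k+1) - t$ (respectively $d = 7 \cdot 2^q - 2t$ in the $\ell = 3, k = 4$ case). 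In the residual boundary cases where this slack degenerates --- chiefly when $m = 1$ --- one falls back on Makeev's classical $\Delta^\perp(1;2/k) \leq k$ and the analogous $\Delta^\perp(1;3/4) \leq 5$ obtained directly from known Makeev-type results.
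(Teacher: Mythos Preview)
Your framework---enlarge the test map by the orthogonality summand $W=\bigoplus_{i<j}V_{\mathbf{e}_i+\mathbf{e}_j}$ and show the product polynomial is nonzero in $\mathbb{F}_2[t_1,\dots,t_k]/(t_1^{d+1},\dots,t_k^{d+1})$---is the paper's. The gap is in your proposed verification. You want to show that $e(V)$ has per-variable degree at most $d-(k-1)$ so that multiplication by the Vandermonde cannot push any exponent past $d$; but this degree bound is false. For $\ell=2$ and $t=2^q$ (so $m=2^q$ and $d=2^qk$), Frobenius gives $p_{2,k}^{2^q}=\sum_{\sigma\in\mathfrak{S}_k} t_{\sigma(1)}^{2^qk}t_{\sigma(2)}^{2^q(k-1)}\cdots t_{\sigma(k)}^{2^q}$, which has per-variable degree exactly $d$, not $d-(k-1)$. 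If instead your ``refined $e(V)$'' means the full polynomial $p_U$ from the proof of Theorem~\ref{thm:upper}, that polynomial already equals $t_1^d\cdots t_k^d$ and already encodes \emph{partial} orthogonality constraints $U(\mathcal{S})$; multiplying it by the full Vandermonde then sends every variable past degree $d$ and annihilates it.

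What the paper actually does is redesign the target rather than simply append $W$ to the target used for Theorem~\ref{thm:upper}. One of the $t-1$ auxiliary summands $U_{2,k-1}$ appearing in the non-orthogonal argument is dropped (this is why the orthogonal proof needs $t\geq 2$), the full orthogonality block $U(\mathcal{O})$ is inserted, and the leftover degrees of freedom are absorbed by a cascade of bisection constraints $U_{1,k-1}\oplus U_{1,k-2}\oplus\cdots\oplus U_{1,1}$ contributing the monomial $t_2t_3^2\cdots t_k^{k-1}$. The resulting target has dimension exactly $kd$, and a sequential degree-forcing computation (first $\sigma(1)=1$, then $\sigma(2)=2$, and so on) yields $p_U=t_1^d\cdots t_k^d$. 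The $(\ell,k)=(3,4)$ case reduces to the $k=4$ instance of this after first observing that $p_{3,4}^m=(t_1t_2t_3t_4)^m\,p_{2,4}^m$ in the truncated ring. A smaller point: your fallback $\Delta^\perp(1;3/4)\leq 5$ is not an established Makeev-type result; the paper itself proves only $5\leq\Delta^\perp(1;3/4)\leq 7$ in the Appendix.
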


As with Theorem~\ref{thm:upper}, these results are nearly tight when $t=2$ and $(\ell,k)\in \{(2,3),(3,4)\}$. 

\begin{corollary}
\label{cor:explicit perp} Let $q\geq 1$ be an integer.
\begin{compactenum}[(a)]
\item $4\cdot 2^q-3\leq \Delta^\perp(2^{q+1}-2;2/3)\leq 4\cdot 2^q -2$ 
\item $7\cdot 2^q-5\leq \Delta^\perp(2^{q+1}-2;3/4)\leq 7\cdot 2^q -4$
\end{compactenum}
\end{corollary}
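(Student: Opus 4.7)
The upper bounds in parts (a) and (b) are immediate specializations of Theorem~\ref{thm:upper perp}: set $t=2$ with $k=3$ in part~(a) and $t=2$ in part~(b).

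For the lower bounds, the plan is to adapt the classical Avis--Ramos~\cite{Av84, Ra96} parameter count to the orthogonal $\ell$-out-of-$k$ setting. The space of $k$-tuples $(H_1,\dots,H_k)$ of pairwise orthogonal affine hyperplanes in $\R^d$ fibers over the Stiefel manifold of unoriented orthonormal $k$-frames of normals (of dimension $kd - \binom{k+1}{2}$), with fiber $\R^k$ recording the affine offsets, and so has total dimension $kd - \binom{k}{2}$. Writing $\sigma_i\colon \R^d \to \{+1,-1\}$ for the signed indicator of the half-space $H_i^+$, a subset $\{H_i\}_{i\in S}$ of size $\ell$ equipartitions a mass $\mu$ iff $\int_{\R^d}\prod_{i\in T}\sigma_i\, d\mu = 0$ for every non-empty $T\subseteq S$; ranging over all $\binom{k}{\ell}$ subsets $S$ produces exactly one such constraint for each non-empty $T\subseteq\{1,\dots,k\}$ of size at most $\ell$, for a total of $N = m\sum_{r=1}^{\ell}\binom{k}{r}$ scalar conditions (equal to $6m$ when $(\ell,k) = (2,3)$ and to $14m$ when $(\ell,k) = (3,4)$). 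For generic masses these conditions are independent on the orthogonal stratum, so the existence of a pairwise orthogonal $\ell/k$-equipartition forces
\[ kd - \binom{k}{2}\;\geq\;m\sum_{r=1}^{\ell}\binom{k}{r}. \]
Substituting $m = 2^{q+1}-2$ then yields $d \geq 2m+1 = 4\cdot 2^q - 3$ in case (a) and $d \geq \lceil (7m+3)/2 \rceil = 7\cdot 2^q - 5$ in case (b).

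The main obstacle I anticipate is justifying the generic independence of the $N$ moment conditions \emph{after} restricting from the full $kd$-dimensional space of $k$-tuples of hyperplanes down to the $(kd - \binom{k}{2})$-dimensional orthogonal stratum, since the orthogonality constraint could in principle introduce dependences absent in the classical Ramos setting. I plan to resolve this via parametric transversality: view the moment vector $F(\vec\mu; H_1,\dots,H_k)$ as a smooth map on the product of the (infinite-dimensional) space of absolutely continuous masses and the finite-dimensional orthogonal configuration locus, and verify that its partial derivative in $\vec\mu$ is surjective onto $\R^N$ at some convenient base configuration, for instance the coordinate hyperplanes through the origin paired with smooth approximations of weighted Dirac deltas placed at points in sufficiently general position. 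Parametric transversality then guarantees that for generic $\vec\mu$, $0\in\R^N$ is a regular value of the restriction of $F$ to the orthogonal locus and hence lies outside its image whenever $N$ exceeds the dimension of that locus.
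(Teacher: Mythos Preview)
Your upper bounds are correct and match the paper: specialize Theorem~\ref{thm:upper perp} to $t=2$.

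For the lower bounds the paper takes a different and much simpler route. Rather than transversality, it invokes Proposition~\ref{prop:lower}, proved by an explicit construction: let each $\mu_i$ be uniform on $2^\ell-1$ tiny balls centered at points in general position (and, in the orthogonal case, positioned so that no $k$ pairwise orthogonal hyperplanes together meet more than $kd-\binom{k}{2}$ of the balls). Since each ball has mass $1/(2^\ell-1)<1/2^\ell$, every ball must be hit by each of any $\ell$ of the $k$ equipartitioning hyperplanes, hence by at least $k-\ell+1$ hyperplanes in all; counting incidences gives $kd-\binom{k}{2}\ge m(2^\ell-1)(k-\ell+1)$. For $\ell=k-1$ this quantity equals your degrees-of-freedom count $m\sum_{r\le\ell}\binom{k}{r}$ and yields exactly $4\cdot 2^q-3$ and $7\cdot 2^q-5$.

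Your transversality outline, by contrast, is only a plan, and the step you flag as the main obstacle is genuinely the gap: you have not established that the $N$ moment conditions are independent on the orthogonal stratum. Surjectivity of the $\vec\mu$-derivative at one base point plus parametric transversality can be made to work, but it requires either reducing to a finite-dimensional family of masses or invoking a Banach-manifold Sard theorem, neither of which you carry out. The paper in fact explicitly calls the full degrees-of-freedom inequality merely ``expected'' and deliberately proves the weaker Proposition~\ref{prop:lower} instead, which suffices here precisely because $\ell=k-1$. I would replace your transversality sketch with a direct appeal to Proposition~\ref{prop:lower}.
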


For example, any two masses in $\R^d$ can be equipartitioned by any two of some collection of three pairwise orthogonal hyperplanes when $d=6$, and by any three of some collection of four pairwise orthogonal hyperplanes when $d=10$. These results can be compared to $\Delta^\perp(2;2)=4$~ and $6\leq \Delta^\perp(2;3)\leq 8$ of ~\cite{Si19}, respectively. Separately, in the Appendix we establish the estimates $$5 \leq \Delta^\perp(1;3/4)\leq 7\,\, \text{and} \,\, 7\leq \Delta^\perp(1;3/5)\leq 9 $$ which can be compared to the known results $\Delta^\perp(1;3)=4$ and $6\leq \Delta^\perp(1;4)\leq 8$ of~\cite{Si19}.

\subsection{Transversal Extensions} A theorem of Dolnikov~\cite{Do92} states that if $\mathcal{F}_1,\ldots, \mathcal{F}_d$ are intersecting finite families of compact convex sets in $\R^d$ (i.e., the intersection of any two members of each family is non-empty), then there exists a hyperplane in $\R^d$ which intersects every member of each family. Such a  hyperplane is said to \textit{pierce} each $\F_i$. This result was shown to imply a discrete (and equivalent) version of the Ham Sandwich theorem: for any $d$ finite point sets in $\R^d$, there exists a hyperplane whose \textit{open} half-spaces contain no more than half the points from each set.

In ~\cite{FMSS22}, Dolnikov's theorem was generalized to multiple hyperplanes. Given a finite family $\F$ of compact convex sets in $\R^d$, one says that a collection of hyperplanes pierces $\F$ provided their union intersects each member of $\F$. For the three infinite families $m=2^r+s$ for which $d=\Delta(m;2)=\left\lceil\frac{3m}{2}\right\rceil$ is known, it was shown that any $m$ families of compact convex sets in $\mathbb{R}^d$ can be pierced by two hyperplanes so long as no \textit{four} members of any family were pairwise disjoint. As with Dolnikov's theorem, one recovered equivalent discrete versions of the mass equipartition results as special cases. Here we show the analogous transversal extensions of Theorem~\ref{thm:upper}:

\begin{theorem}
\label{thm:transversal} 
Let $q\geq 0$ and $1\leq t\leq 2^q$ be integers, let $m=2^{q+1}-t$, and let $\F_1,\ldots, \F_m$ be finite families of compact convex sets in $\R^d$.
\begin{compactenum}[(a)]
\item Let $d=2^q\cdot (k+1)-t$. If no four members of any $\F_i$ are pairwise disjoint, then every $\F_i$ can be pierced by any two of some collection of $k$ hyperplanes in $\R^d$.
\item If $d=7\cdot 2^q -2t$ and no eight members of any $\F_i$ are pairwise disjoint, then every $\F_i$ can be pierced by any three of some collection of four hyperplanes in $\R^d$. 
\end{compactenum}
\end{theorem}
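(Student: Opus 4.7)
The plan is to reduce Theorem \ref{thm:transversal} to Theorem \ref{thm:upper} following the transversal-to-equipartition paradigm developed in \cite{FMSS22}: to each family $\F_i$ we attach an absolutely continuous mass $\mu_i$ on $\R^d$ such that equipartition of $\mu_i$ by any $\ell$ hyperplanes forces those hyperplanes to pierce $\F_i$. Applying Theorem \ref{thm:upper}(a) (respectively, (b)) to the masses $\mu_1, \ldots, \mu_m$ in dimension $d = 2^q(k+1) - t$ (respectively, $d = 7 \cdot 2^q - 2t$) then produces $k$ hyperplanes $H_1, \ldots, H_k$ such that any two (respectively, three) of them equipartition each $\mu_i$, and hence, by the piercing lemma, pierce each $\F_i$. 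The problem is thereby reduced to constructing $\mu_i$ with the required equipartition-implies-piercing property.

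For the construction, following the scheme of \cite{FMSS22}, I would choose a representative $c_F \in F$ for each $F \in \F_i$ and let $\mu_i$ be a sharply concentrated small-ball smoothing of the counting measure on $\{c_F : F \in \F_i\}$, with the balls placed in sufficiently general position inside the corresponding $F$. Arguing by contradiction, suppose that some $\ell$-tuple $H_{i_1}, \ldots, H_{i_\ell}$ of the hyperplanes from Theorem \ref{thm:upper} equipartitions $\mu_i$ but fails to pierce $\F_i$; then some $F_0 \in \F_i$ lies entirely in a single open region $R_{j_0}$ of the arrangement, while equipartition forces each of the $2^\ell$ open regions to carry the same $\mu_i$-mass. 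The key step, following the combinatorial Kneser-type argument of \cite{FMSS22}, is to upgrade the existence of this single unpierced $F_0$ (together with equipartition of the representatives across the remaining $2^\ell - 1$ regions) to the extraction of $2^\ell$ pairwise disjoint members of $\F_i$, which contradicts the standing hypothesis.

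The main obstacle is precisely this upgrade: equipartition directly controls only the distribution of the representative points $c_F$ among the $2^\ell$ open regions, whereas piercing requires control over the entire sets $F$ (since a representative can lie in a region while the set containing it crosses a hyperplane, or vice versa). The resolution in \cite{FMSS22} combines the mass equipartition with the convex-geometric structure of $\F_i$, using Helly- and Kneser-type arguments on the intersection pattern of the $F \in \F_i$ together with an appropriate perturbation/limit of the $c_F$, to produce the required pairwise disjoint subfamily whenever a single unpierced member is present. Because this entire lemma is local to each $\F_i$ and invokes the equipartition property only for the fixed $\ell$-subset $\{H_{i_1}, \ldots, H_{i_\ell}\}$, the argument transfers verbatim from the $\ell = k$ setting of \cite{FMSS22} to the $\ell < k$ setting at hand, so that parts (a) and (b) of Theorem \ref{thm:transversal} follow by direct application of the corresponding parts of Theorem \ref{thm:upper}, with $\ell = 2$ and $\ell = 3$ respectively.
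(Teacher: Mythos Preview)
Your reduction runs in the wrong direction. In both \cite{FMSS22} and the present paper the chain of implications is
\[
\text{Radon-type result} \ \Longrightarrow\ \text{piercing/transversal result} \ \Longrightarrow\ \text{equipartition result},
\]
and the transversal statement is strictly \emph{stronger} than the equipartition statement (just as Dolnikov's theorem is stronger than the Ham Sandwich theorem). So one cannot hope to deduce Theorem~\ref{thm:transversal} from Theorem~\ref{thm:upper} by attaching to each $\F_i$ a mass $\mu_i$ and invoking equipartition.

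The specific gap is your ``upgrade'' step. Suppose $H_{i_1},\ldots,H_{i_\ell}$ equipartition your smoothed counting measure $\mu_i$ but miss some $F_0\in\F_i$, so $F_0$ sits in one open region. You then want to produce $2^\ell$ pairwise disjoint members of $\F_i$. This inference fails: take any family $\F_i$ all of whose members share a common point (so no two are disjoint, let alone $2^\ell$), choose representatives $c_F$ spread widely in $\R^d$, and include one very small set $F_0$ far from the common point. Hyperplanes equipartitioning the representatives certainly exist, but nothing forces them to meet $F_0$. Equipartition controls where the \emph{points} $c_F$ lie relative to the arrangement, not whether the \emph{sets} $F$ are separated by it; representatives lying in distinct regions does not make their sets disjoint. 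No Helly/Kneser argument of the type you sketch appears in \cite{FMSS22}, and indeed none can repair this.

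What the paper actually does is prove a Radon-type theorem (Proposition~\ref{prop:Radon}) for continuous maps $\Delta_n\to\R^d$ directly, using the \emph{same} polynomial computations and the Borsuk--Ulam criterion of Proposition~\ref{prop:Borsuk-Ulam} that underlie Theorem~\ref{thm:upper}; the test map is built not from measures but from distances to subcomplexes $K_g^j$ of the deleted-join configuration space, and the zero of this map yields the required Radon pairs. Proposition~\ref{prop:implications} (the $\ell<k$ analogue of \cite[Theorem 3.3]{FMSS22}) then converts Radon pairs into piercing hyperplanes. Theorem~\ref{thm:upper} is recovered afterwards as a corollary of Theorem~\ref{thm:transversal}, not the other way around.
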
 

As an example, Theorem~\ref{thm:transversal} gives the following generalization of $\Delta(1;3/4)\leq 5$: any finite family of compact convex sets in $\R^5$ can be pierced by any three of some collection of four hyperplanes provided no eight members of the family are pairwise disjoint. As we have already observed, $\Delta(1;3/4)\leq 5$ is an immediate consequence of $\Delta(1;4)\leq 5$. To our knowledge, however, there is no transversal generalization of the latter.

\section{Configuration-Spaces and Test Maps for Question~\ref{quest:generalized Makeev}}
\label{sec:topology} 

We begin by recalling the established configuration-space/test-map paradigm by which hyperplane mass partition problems are reduced to equivariant topology. 
\subsection{Configuration-Spaces} 

The starting point for applications of equivariant topology to hyperplane mass partition problems is the identification of pairs of half-spaces in $\R^d$ with pairs of antipodal points on the unit sphere $S^d$ in $\R^{d+1}$. Explicitly, for each $x=(\mathbf{a},b)\in \R^d\times \R$ with $\|\mathbf{a}\|^2+|b|^2=1$, one defines $H(x)=\{\mathbf{u}\in \R^d\mid \langle \mathbf{u}, \mathbf{a}\rangle =b\}$ and $$H^+(x)=\{\mathbf{u}\in \R^d\mid \langle \mathbf{u},\mathbf{a}\rangle \geq b\}\,\, \text{and}\,\, H^-(x)=\{\mathbf{u}\in \R^d\mid \langle \mathbf{u},\mathbf{a}\rangle \leq b\}.$$
\noindent If $x\neq (\mathbf{0},\pm 1)$, then the sets $H^+(x)$ and $H^-(x)$ are the closed half-spaces determined by the hyperplane $H(x)$. As $H(x)=H(-x)$ and $H^\pm(-x)=H^\mp(x)$ for all $x\in S^d$, the antipodal $\mathbb{Z}_2$-action on the sphere coincides with the reflective $\mathbb{Z}_2$-action on pairs of half-spaces. If $x=(\mathbf{0},\pm 1)$, then $H(x)$ is a ``hyperplane at infinity'', in which case $\{H^+(x), H^-(x)\}=\{\emptyset, \R^d\}$. 

Under this identification, the regions determined by any arrangement of $k\geq 2$ hyperplanes can be parametrized by the $k$-fold product $(S^d)^k$ of spheres. Namely, let $\mathbb{Z}_2=\{0,1\}$, and set $H^0(x)=H^+(x)$ and $H^1(x)=H^-(x)$ for each $x\in S^d$. We let $$\mathcal{R}_g(x)=H^{g_1}(x_1)\cap \cdots \cap H^{g_k}(x_k)$$ for each $x=(x_1,\ldots, x_k)\in (S^d)^k$ and each $g=(g_1,\ldots, g_k)\in \mathbb{Z}_2^k$. As before, $\mathcal{R}_{g}(h\cdot x)=\mathcal{R}_{gh}(x)$ for each $g,h\in \mathbb{Z}_2^k$ and $x\in (S^d)^k$, so the standard $\mathbb{Z}_2^k$-action on $(S^d)^k$ corresponds to the reflective action on each collection $\{\mathcal{R}_g(x)\}_{g\in \mathbb{Z}_2^{k}}$. A priori, some of the hyperplanes may be at infinity and not all of the regions $\mathcal{R}_g(x)$ are necessarily distinct. If any $2\leq \ell\leq k$ of the $k$ hyperplanes are to equipartition a given mass $\mu$, however, then none of the hyperplanes are at infinity and the hyperplanes are distinct (morever, any $\ell$ of the hyperplanes are affinely independent).

Considering the action of the symmetric group $\mathfrak{S}_k$ on the coordinate spheres of $(S^d)^k$ by permutations, the above identification coincides with the $\mathfrak{S}_k$-action on any collection of $k$ hyperplanes, and so the natural action of the hyperoctahedral group $\mathfrak{S}_k^\pm:=\mathbb{Z}_2^k \rtimes \mathfrak{S}_k$ on $(S^d)^k$ coincides with that on each collection $\{\mathcal{R}_g(x)\}_{g\in \mathbb{Z}_2^k}$. While the use of the full $\mathfrak{S}_k^\pm$-action was essential in the derivation of all known tight $k\in\{2,3\}$ results to Question~\ref{quest:G-H-R} except those given by the upper bound ~(\ref{eqn:M-L-Z}), in what follows we shall \textit{only} need the action of the subgroup $\mathbb{Z}_2^k$, in which case the action on $(S^d)^k$ is free. 

\subsection{Test Spaces and Test Maps} Let $1\leq \ell \leq k$, and suppose that $\mu_1,\ldots, \mu_m$ are masses on $\R^d$ which we seek to equipartition by any $\ell$ of some collection of $k$ hyperplanes. We shall adopt the ``Fourier perspective'' for mass equipartitions introduced in~\cite{Si15}. To each decomposition $(\mathcal{R}_g)_{g\in \mathbb{Z}_2^k}$  of $\R^d$ determined by $k$ hyperplanes $H_1,\ldots, H_k$ (some possibly at infinity, not all necessarily distinct), let $f_i \colon \mathbb{Z}_2^k \rightarrow \R$ be given by $g\mapsto \mu_i(\mathcal{R}_g)$ for each $g\in\mathbb{Z}_2^k$. The Fourier expansion of each such map is 

$$f_i=\sum_{h\in \mathbb{Z}_2^{k}}c_h (f_i) \chi_h,$$ where for each $h=(h_1,\ldots, h_k)\in \mathbb{Z}_2^k$ the $\mathbb{Z}_2^k$-representation $\chi_h\colon \mathbb{Z}_2^{k}\rightarrow \{\pm 1\}$ is defined by $\chi_h(g)=(-1)^{h_1g_1+\cdots + h_kg_k}$ for each $g=(g_1,\ldots, g_k)$ in $\mathbb{Z}_2^k$ and whose corresponding Fourier coefficients are $$c_h(f_i)=\frac{1}{2^k}\sum_{g\in \mathbb{Z}_2^k} f_i(g)\chi_h(g).$$

We recall that the set of all $\chi_g$ forms a basis for the space of all functions $f\colon \mathbb{Z}_2^k\rightarrow \R$ (and in fact an orthonormal basis under the inner-product $\langle f_1, f_2\rangle =\frac{1}{2^k} \sum_{g\in G} f_1(g)f_2(g)$; see, e.g.~\cite{Se77}). An equipartition of the $\mu_i$ by any $\ell$ of $k$ hyperplanes can be characterized in terms of Fourier coefficients as follows: 

\begin{proposition}
\label{prop:Fourier}
Let $\mu_1,\ldots, \mu_m$ be masses on $\R^d$ and let  $H_1,\ldots, H_k$ be hyperplanes on $\R^d$. Any $\ell$ of the $k$ hyperplanes equipartitions each $\mu_i$ if and only if $c_h(f_i)=0$ for all $h\in E_{\ell,k}:=\{(h_1,\ldots, h_k)\in \mathbb{Z}_2^k\mid 1\leq h_1+\cdots + h_k\leq \ell\}$ and all $1\leq i \leq m$.
\end{proposition}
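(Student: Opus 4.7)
The plan is to translate the geometric equipartition condition into linear conditions on Fourier coefficients via a standard indicator-function computation on $\mathbb{Z}_2^k$. First, I would observe that for any subset $S \subseteq \{1,\ldots,k\}$ with $|S|=\ell$, each of the $2^\ell$ regions cut out by the hyperplanes $\{H_j\}_{j\in S}$ decomposes as a disjoint union of $2^{k-\ell}$ of the regions $\mathcal{R}_g(x)$, namely those indexed by $g \in \mathbb{Z}_2^k$ whose restriction $g|_S \in \mathbb{Z}_2^S$ equals a prescribed sign pattern $g_S$. Equipartition of $\mu_i$ by $\{H_j\}_{j \in S}$ is therefore equivalent to
$$\sum_{g \in \mathbb{Z}_2^k,\; g|_S = g_S} f_i(g) \;=\; \frac{\mu_i(\R^d)}{2^\ell} \qquad \text{for every } g_S \in \mathbb{Z}_2^S.$$

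Next, I would expand the indicator of $\{g|_S = g_S\}$ as a product of $\ell$ terms of the form $\tfrac{1}{2}(1+(-1)^{g_j+g_{S,j}})$ for $j \in S$, obtaining
$$\bigl[\,g|_S = g_S\,\bigr] \;=\; \frac{1}{2^\ell}\sum_{T \subseteq S}(-1)^{\sum_{j \in T} g_{S,j}}\,\chi_{h_T}(g),$$
where $h_T \in \mathbb{Z}_2^k$ is the characteristic vector of $T$. Summing against $f_i$ and using the inversion identity $\sum_g f_i(g)\chi_h(g) = 2^k c_h(f_i)$ then gives
$$\sum_{g|_S = g_S} f_i(g) \;=\; 2^{k-\ell}\sum_{T \subseteq S}(-1)^{\sum_{j \in T} g_{S,j}}\,c_{h_T}(f_i).$$
Since the $T=\emptyset$ contribution on the right-hand side is exactly $\mu_i(\R^d)/2^\ell$, equipartition by $\{H_j\}_{j \in S}$ is equivalent to the vanishing
$$\sum_{\emptyset \neq T \subseteq S}(-1)^{\sum_{j \in T} g_{S,j}}\,c_{h_T}(f_i) \;=\; 0 \qquad \text{for every } g_S \in \mathbb{Z}_2^S.$$

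Finally, I would invoke the linear independence of the restricted characters $\{\chi_{h_T}|_{\mathbb{Z}_2^S}\}_{T \subseteq S}$ on $\mathbb{Z}_2^S$, equivalently the invertibility of the $2^\ell \times 2^\ell$ Hadamard matrix $\bigl[(-1)^{\sum_{j \in T}g_{S,j}}\bigr]_{T,g_S}$, to conclude that $c_{h_T}(f_i)=0$ for every nonempty $T \subseteq S$. Ranging $S$ over all subsets of size $\ell$ in $\{1,\ldots,k\}$ and $i$ over $\{1,\ldots,m\}$, the supports $T$ obtained in this way are precisely the nonzero elements of $\mathbb{Z}_2^k$ of Hamming weight at most $\ell$, i.e.\ the set $E_{\ell,k}$; the reverse implication is immediate from the same chain of identities. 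There is no serious obstacle here beyond bookkeeping; the only step requiring any care is the Hadamard/character inversion, which is a routine fact about Fourier analysis on $\mathbb{Z}_2^S$.
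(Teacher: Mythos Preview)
Your proposal is correct and follows essentially the same approach as the paper: both arguments compute the measure of a region cut by the chosen $\ell$ hyperplanes as a sum of $f_i$ over a coset in $\mathbb{Z}_2^k$, identify this with a combination of Fourier coefficients $c_h(f_i)$ supported on the chosen indices, and then invoke orthogonality of characters on $\mathbb{Z}_2^\ell$ (your Hadamard inversion) to isolate the individual coefficients. The only cosmetic difference is that the paper packages the computation by introducing the auxiliary function $f_i^\ell$ on the subgroup $\mathbb{Z}_2^\ell$ and showing $c_h(f_i^\ell)=2^{k-\ell}c_h(f_i)$, whereas you expand the indicator of the coset directly on $\mathbb{Z}_2^k$; the underlying identities are the same.
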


\begin{proof}[Proof of Proposition~\ref{prop:Fourier}]
For each choice of $1\leq i_1<i_2<\cdots < i_\ell\leq k$, let $\mathbb{Z}_2^{\ell}:=\langle \mathbf{e}_{i_1},\ldots, \mathbf{e}_{i_\ell}\rangle$ be the subgroup of $\mathbb{Z}_2^k$ generated by the corresponding standard basis vectors of $\mathbb{Z}_2^k$ and let $\mathbb{Z}_2^{k-\ell}$ be the subgroup generated by the remaining basis vectors. The regions $\{\mathcal{R}_g^\ell\}_{g\in \mathbb{Z}_2^\ell}$ determined by the hyperplanes $H_{i_1},\ldots, H_{i_\ell}$ are given by $\mathcal{R}_g^\ell=\bigcup_{g'\in \mathbb{Z}_2^{k-\ell}} \mathcal{R}_{g+g'}$ for each $g\in \mathbb{Z}_2^\ell$, so $\mu(\mathcal{R}_g^\ell)=\sum_{g'\in \mathbb{Z}_2^{k-\ell}}\mu(\mathcal{R}_{g+g'})$ for each $g\in \mathbb{Z}_2^\ell$. Observe also that $\chi_h(g)=\chi_h(g)\chi_h(g')=\chi_h(g+g')=$ for each $g,h\in \mathbb{Z}_2^\ell$ and $g'\in \mathbb{Z}_2^{k-\ell}$. Consider the maps $f_i^\ell\colon \mathbb{Z}_2^\ell\rightarrow \R$ given by $f_i^\ell(g)=\mu(\mathcal{R}_g^\ell)$. These have  Fourier expansions $$f_i^\ell=\sum_{h\in \mathbb{Z}_2^\ell} c_h(f_i^\ell)\chi_h{_{\mid \mathbb{Z}_2^\ell}},$$ and by the above observations we see that $$c_h(f_i^\ell)=
\frac{1}{2^\ell}\sum_{g\in \mathbb{Z}_2^\ell}\sum_{g'\in \mathbb{Z}_2^{k-\ell}}f_i(g+g')\chi_h{_{\mid \mathbb{Z}_2^\ell}}(g)=\frac{1}{2^\ell}\sum_{g\in \mathbb{Z}_2^\ell} \sum_{g'\in \mathbb{Z}_2^{k-\ell}}f_i(g+g')\chi_h(g+g')=2^{k-\ell}c_h(f_i)$$ for each $h\in \mathbb{Z}_2^\ell$. On the other hand, $\chi_0=1$ and therefore $c_0(f_i^\ell)=\frac{1}{2^\ell}\sum_{g\in \mathbb{Z}_2^\ell}\mu_i(\mathcal{R}_g)=\frac{1}{2^\ell} \mu_i(\R^d)$. Examining the Fourier decomposition of the $f_i^\ell$, it follows that $H_{i_1},\ldots, H_{i_\ell}$ equipartition $\mu_i$ if and  only if $c_h(f_i^\ell)=0$ for all $h\in \mathbb{Z}_2^\ell\setminus \{0\}$, and therefore if and only if $c_h(f_i)=0$ for all such $h$. Ranging over all choices of any $\ell$ of the $k$ hyperplanes, one has that any $\ell$ of the hyperplanes equipartitions $\mu_i$ if and only if $c_h(f_i)=0$ for each $h\in E_{\ell,k}$. \end{proof}

For each $h\in \mathbb{Z}_2^k$, let $V_h=\R$ denote the 1-dimensional $\mathbb{Z}_2^k$-module determined by the action of the representation $\chi_h\colon \mathbb{Z}_2^k\rightarrow \{\pm 1\}$. For each $x\in (S^d)^k$ and any $1\leq i \leq k$, evaluating Fourier coefficients  determines a continuous map $\F_{i,h}\colon (S^d)^k\rightarrow V_h$ given by \begin{equation} \F_{i,h}(x)=\frac{1} {2^k}\sum_{g\in \mathbb{Z}_2^k}\mu_i(\mathcal{R}_g(x))\chi_h(g).\end{equation} 
Letting $U_{\ell,k}=\oplus_{h\in E_{\ell,k}} V_h$, Proposition~\ref{prop:Fourier} shows that zeros of the test-map  
\begin{equation}
\label{eqn:Fourier Test} 
\F=\oplus_{i=1}^m \oplus_{h\in E_{\ell,k}} \F_{i,h}\colon  (S^d)^k\rightarrow U_{\ell,k}^{\oplus m}\end{equation} correspond bijectively to families of $k$ hyperplanes in $\R^d$, any $\ell$ of which equipartitions each $\mu_i$. In determining upper bounds to Question~\ref{quest:generalized Makeev}, it will be crucial to observe that each map $\F_{i,h}\colon (S^d)^k\rightarrow V_h$ above is  $\mathbb{Z}_2^k$-equivariant, and therefore that $\F$ is $\mathbb{Z}_2^k$-equivariant as well. 

In order to incorporate pairwise orthogonality of the hyperplanes, we proceed as in~\cite{Si19} and impose the condition on the target as opposed to the configuration space. For each $1\leq r<s\leq k$, let $\pi_{r,s}\colon (S^d)^k\rightarrow V_{\mathbf{e}_r+\mathbf{e}_s}$ be given by $\pi_{r,s}(x)=\langle q(x_r),q(x_s)\rangle$, where $q\colon S^d\rightarrow \R^d$ is the projection of $S^d$ onto its first $d$ coordinates. Each such map is continuous and $\mathbb{Z}_2^k$-equivariant, so $$\pi=\oplus_{1\leq r<s\leq k}\pi_{r,s}\colon (S^d)^k\rightarrow U(\mathcal{O})\colon=  \oplus_{1\leq r<s\leq k} V_{\mathbf{e}_r+\mathbf{e}_s}$$ is equivariant as well. Any zero of the enlarged test map \begin{equation}
    \label{eqn:orthogonality}
\mathcal{F}\oplus \pi\colon (S^d)^k\rightarrow U_{\ell,k}^{\oplus m}\oplus U(\mathcal{O})
\end{equation} is now seen to correspond to a collection of $k$ pairwise orthogonal hyperplanes, any $\ell$ of which equipartition each mass.

\subsection{Lower Bounds}

By the orthogonality of characters, the $|E_{\ell,k}|=\sum_{j=1}^\ell \binom{k}{j}$ equipartition conditions on any mass are independent. A degrees of freedom count applied to the map $\mathcal{F}$ therefore leads to an expected lower bound of $k\Delta(m;\ell/k)\geq m\sum_{j=1}^\ell \binom{k}{j}$, and likewise $k\Delta^\perp (m;\ell/k)\geq m\sum_{j=1}^\ell \binom{k}{j} + \binom{k}{2}$ in the orthogonal setting. The following weaker lower bound matches that expected when $\ell\in\{k-1,k\}$. In particular, one recovers the Ramos lower bound~(\ref{eqn:Ramos}) and the lower bounds of Corollaries~\ref{cor:explicit} and ~\ref{cor:explicit perp} and Proposition~\ref{prop:perp} of the Appendix. 

\begin{proposition}  \label{prop:lower}

Let $m\geq 1$ and $2\leq \ell \leq k$ be integers. Then $k\Delta(m;\ell/k)\geq m(2^\ell-1)(k-\ell+1)$ and $k\Delta^\perp (m;\ell/k)\geq  m(2^\ell-1)(k-\ell+1)+ \binom{k}{2}$. 
 \end{proposition}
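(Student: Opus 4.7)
The plan is to establish both inequalities by a degrees-of-freedom count made rigorous through the moment curve construction of Avis and Ramos, which underlies the Ramos bound~(\ref{eqn:Ramos}) recovered by the proposition at $\ell=k$. I would place $m$ masses $\mu_1,\ldots,\mu_m$, where each $\mu_i$ is a small positive measure supported on a short arc of the moment curve $\gamma(t)=(t,t^2,\ldots,t^d)\subset\R^d$ about $\gamma(t_i)$, for generic $t_1<\cdots<t_m$. Each hyperplane $H_j$ pulls back to a nonzero polynomial of degree at most $d$ along $\gamma$ and hence meets $\gamma$ in at most $d$ points; writing $c_{i,j}$ for the intersection count of $H_j$ with the $i$th arc, one has $\sum_i c_{i,j}\le d$ for each $j$, whence $\sum_{i,j}c_{i,j}\le kd$.

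For every $\ell$-subset $S\subseteq\{1,\ldots,k\}$, equipartitioning $\mu_i$ by the hyperplanes in $S$ requires the $i$th arc to traverse each of the $2^\ell$ sign-pattern regions of $S$ with positive mass, so $\sum_{j\in S} c_{i,j}\ge 2^\ell-1$, with the cuts of distinct hyperplanes in $S$ at pairwise distinct positions along the arc. The crux of the argument is to establish the per-arc estimate
\[
\sum_{j=1}^k c_{i,j}\ge(2^\ell-1)(k-\ell+1),
\]
which is stronger than the naive LP consequence $\sum_j c_{i,j}\ge k(2^\ell-1)/\ell$ obtained by summing the subset-sum constraints. The sharper bound is to come from a combinatorial/geometric argument: configurations $(c_{i,j})_j$ with smaller total that satisfy all $\binom{k}{\ell}$ subset-sum constraints would force cut positions on distinct hyperplanes to coincide along the arc in order to achieve simultaneous equipartition for every $\ell$-subset, and such coincidences collapse the count of distinct sign-pattern regions visited by the arc below $2^\ell$ for some subset, breaking its equipartition. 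The prototype case is $\ell=2$, $k=3$: any configuration $(c_{i,1},c_{i,2},c_{i,3})=(1,2,2)$ forces $H_1$ to cut the arc at its midpoint and both $H_2$ and $H_3$ to cut at the common quartile points $1/4$ and $3/4$ (from pair equipartition with $H_1$), whereupon the pair $\{H_2,H_3\}$ has only two distinct cuts along the arc and so visits only two of the four required regions. Summing the per-arc inequality then yields
\[
m(2^\ell-1)(k-\ell+1)\le\sum_{i,j}c_{i,j}\le kd,
\]
which is the first inequality of the proposition.

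For the orthogonal inequality, I would use that the relations $\langle \mathbf{a}_r,\mathbf{a}_s\rangle=0$ for $1\le r<s\le k$ are $\binom{k}{2}$ independent linear conditions on the hyperplane normals, and so reduce the admissible parameter space by $\binom{k}{2}$ dimensions. Rerunning the moment-curve count under this reduction gives $kd-\binom{k}{2}\ge m(2^\ell-1)(k-\ell+1)$, equivalently the claimed orthogonal bound.

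The principal technical obstacle is the per-arc inequality for general $\ell$ and $k$. For $\ell=k$ it reduces to the classical estimate $\sum_j c_{i,j}\ge 2^k-1$ underlying~(\ref{eqn:Ramos}). For $\ell<k$ a careful case analysis will be required: one must show that any nonnegative integer vector $(c_{i,j})_j$ satisfying $\sum_{j\in S}c_{i,j}\ge 2^\ell-1$ for every $\ell$-subset $S$ but with $\sum_j c_{i,j}<(2^\ell-1)(k-\ell+1)$ cannot correspond to a geometric arrangement of cuts on a single arc for which every $\ell$-subset equipartitions, because the subset-sum constraints together force sufficient coincidences between cuts of distinct hyperplanes that some $\ell$-subset fails to visit all $2^\ell$ sign-pattern regions.
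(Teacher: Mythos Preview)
Your plan has the right overall shape (build explicit test measures, count incidences, bound from both sides), but the step you yourself flag as the ``principal technical obstacle''---the per-arc inequality $\sum_j c_{i,j}\ge(2^\ell-1)(k-\ell+1)$---is not actually proved, and the coincidence-forcing argument you sketch for $(\ell,k)=(2,3)$ has no evident extension to general $(\ell,k)$. The LP relaxation of the subset-sum constraints gives only $\sum_j c_{i,j}\ge\lceil k(2^\ell-1)/\ell\rceil$, which falls short of the target whenever $\ell<k$, and closing that gap by analyzing forced coincidences of cut positions along a single arc would seem to require a new combinatorial lemma for each pair $(\ell,k)$. As written, the proof is incomplete precisely at its crux.

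The paper's argument sidesteps this difficulty completely by changing the test measures: rather than one arc per mass, let each $\mu_i$ be uniform on the union of $2^\ell-1$ small disjoint balls, with all $m(2^\ell-1)$ centers in general position in~$\R^d$. Then every ball carries mass $\tfrac{1}{2^\ell-1}>\tfrac{1}{2^\ell}$ of its $\mu_i$, so no ball can lie in a single orthant determined by any $\ell$ of the $k$ hyperplanes; hence every $\ell$-subset of hyperplanes meets every ball, meaning at most $\ell-1$ hyperplanes miss a given ball and at least $k-\ell+1$ hit it. That is already $m(2^\ell-1)(k-\ell+1)$ ball--hyperplane incidences, while general position caps the incidences at $d$ per hyperplane, giving $kd\ge m(2^\ell-1)(k-\ell+1)$. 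Your hard per-arc inequality is thus replaced by a one-line pigeonhole; the key move is to split each mass into $2^\ell-1$ pieces rather than concentrating it on a single arc. (If you prefer the moment curve, $2^\ell-1$ short arcs per mass works identically.)

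For the orthogonal bound, ``rerunning the count under a $\binom{k}{2}$-dimensional reduction of the parameter space'' is a heuristic, not a proof. The paper's approach is to choose the ball centers generically so that no union of $k$ pairwise orthogonal hyperplanes can meet more than $kd-\binom{k}{2}$ of them, whence the same incidence count yields $kd-\binom{k}{2}\ge m(2^\ell-1)(k-\ell+1)$ directly.
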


\begin{proof}[Proof of Proposition~\ref{prop:lower}] 

For each $1\leq i \leq m$, let $C_i=\{p_{i,j}\mid 1\leq j\leq  2^\ell -1\}$ be a collection of $2^\ell-1$ points in $\R^d$ such that $\cup_{i=1}^m C_i$ is in general position. For each $i$ and $j$, let $B_\epsilon(p_{i,j})$ be the closed $\epsilon$-ball centered at $p_{i,j}$. By choosing $\epsilon>0$ to be sufficiently small, the balls are pairwise disjoint and any hyperplane intersects at most $d$ of the balls. 

For each $1\leq i \leq m$, let $\mu_i$ be the normalization of Lebesgue measure on $\R^d$ restricted to $\cup_{j=1}^{2^\ell-1} B_\epsilon(p_{i,j})$. Supposing that $d\leq \Delta(m;\ell/k)$, there are $k$ hyperplanes in $\R^d$, any $\ell$ of which equipartitoins each $\mu_i$. As $\mu_i(B_\epsilon(p_{i,j}))=\frac{1}{2^\ell-1}<\frac{1}{2^\ell}$, each ball is intersected by any $\ell$ of the $k$ hyperplanes and therefore by at least $k-\ell-1$ hyperplanes in all. As the hyperplanes are distinct and the balls are disjoint, this gives at least $m(2^\ell-1)(k-\ell-1)$ distinct points from $\cup_{i,j} B_\epsilon(p_{i,j})$ which lie on the union of the $k$ hyperplanes. On the other hand, each hyperplane intersects at most $d$ of the balls, so $kd\geq m(2^\ell-1)(k-\ell+1)$.  

In the orthogonal setting, we choose the points $p_{i,j}$ in general position as before, and moreover so that no more than $kd-\binom{k}{2}$ points lie on the union of any collection of $k$ pairwise orthogonal hyperplanes. Again, one chooses $\epsilon>0$ so that the $B_\epsilon(p_{i,j})$ are disjoint, no hyperplane intersects any $d$ balls, and the union of $k$ pairwise orthogonal hyperplanes intersects at most $kd-\binom{k}{2}$ balls. If $d\leq \Delta^\perp(m;\ell/k)$, then one has $k$ pairwise orthgonal hyperplanes in $\R^d$, the union of which contains at least $m(2^\ell-1)(k-\ell+1)$ distinct points lying on $\cup_{i,j} B_\epsilon(p_{i,j})$. Thus $kd-\binom{k}{2}\geq m(2^\ell-1)(k-\ell+1)$.
\end{proof}

\subsection{Upper Bounds} 
\label{sec:polynomial} 

Given the equivariance of the test-maps $\mathcal{F}$, upper bounds $\Delta(m;\ell/k)\leq d$ follow provided that arbitrary continuous $\mathbb{Z}_2^k$-equivariant maps $F\colon (S^d)^k\rightarrow U_{\ell,k}^{\oplus m}$ have a zero. In~\cite{BK12}, the following Borsuk-Ulam result was given in this context, whose polynomial criterion is an application of equivariant cohomological index theory for the group $\mathbb{Z}_2^k$ as given in~\cite{FH98}.

\begin{theorem} Let $p_{\ell,k}=\prod_{1\leq a_1+\ldots +a_k\leq \ell} (a_1t_1+\cdots +a_kt_k)\in \mathbb{Z}_2[t_1,\ldots, t_k].$ If $p_{\ell,k}^m$ $\notin \langle t_1^{d+1},\ldots, t_k^{d+k}\rangle$, then any continuous $\mathbb{Z}_2^k$-equivariant map $F\colon (S^d)^k\rightarrow U_{k,\ell}^{\oplus m}$ has a zero.
\end{theorem}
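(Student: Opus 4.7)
The plan is to apply the Fadell--Husseini ideal-valued cohomological index $\mathrm{Ind}_G(-)\subseteq H^*(BG;\mathbb{Z}_2)$ of~\cite{FH98} with $G=\mathbb{Z}_2^k$. Recall that this index is defined as the kernel of the map $H^*(BG;\mathbb{Z}_2)\to H^*_G(X;\mathbb{Z}_2)$ induced by the fibration $EG\times_G X\to BG$, and that its defining monotonicity property states that the existence of a $G$-equivariant map $X\to Y$ forces $\mathrm{Ind}_G(Y)\subseteq \mathrm{Ind}_G(X)$. I would argue by contrapositive: suppose $F\colon (S^d)^k\to U_{\ell,k}^{\oplus m}$ is a continuous $\mathbb{Z}_2^k$-equivariant map with no zero. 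Normalizing gives an equivariant deformation retraction of $F$ onto a $\mathbb{Z}_2^k$-equivariant map into the unit sphere $S(U_{\ell,k}^{\oplus m})$, so monotonicity yields
$$\mathrm{Ind}_{\mathbb{Z}_2^k}\bigl(S(U_{\ell,k}^{\oplus m})\bigr)\subseteq \mathrm{Ind}_{\mathbb{Z}_2^k}\bigl((S^d)^k\bigr).$$

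The substance of the proof is then to identify elements of each side in $H^*(B\mathbb{Z}_2^k;\mathbb{Z}_2)=\mathbb{Z}_2[t_1,\ldots,t_k]$. For the source, since $\mathbb{Z}_2^k$ acts on $(S^d)^k$ as the product of antipodal actions on the individual factors, the Borel construction splits as $\prod_{i=1}^k(E\mathbb{Z}_2\times_{\mathbb{Z}_2} S^d)$, and a standard Gysin-plus-K\"unneth argument gives $\mathrm{Ind}_{\mathbb{Z}_2^k}\bigl((S^d)^k\bigr)=\langle t_1^{d+1},\ldots,t_k^{d+1}\rangle$ (the exponent ``$d+k$'' in the last coordinate of the statement appears to be a typographical slip for $d+1$). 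For the target, $U_{\ell,k}^{\oplus m}$ is a real $\mathbb{Z}_2^k$-representation, and the Gysin sequence of its associated sphere bundle $E\mathbb{Z}_2^k\times_{\mathbb{Z}_2^k}S(U_{\ell,k}^{\oplus m})\to B\mathbb{Z}_2^k$ shows that the top Stiefel--Whitney (i.e.\ Euler) class $e(U_{\ell,k}^{\oplus m})$ is annihilated by restriction to the sphere bundle and therefore lies in $\mathrm{Ind}_{\mathbb{Z}_2^k}\bigl(S(U_{\ell,k}^{\oplus m})\bigr)$. Since each $V_h$ is the one-dimensional sign representation attached to the character $\chi_h$, its first Stiefel--Whitney class is the linear form $h_1t_1+\cdots+h_kt_k$, and multiplicativity of $w_1$ under direct sums gives
$$e(U_{\ell,k}^{\oplus m})=\prod_{h\in E_{\ell,k}}(h_1t_1+\cdots+h_kt_k)^m=p_{\ell,k}^m.$$

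Combining the index computations with the containment above, the existence of a nowhere-zero equivariant $F$ would force $p_{\ell,k}^m\in\langle t_1^{d+1},\ldots,t_k^{d+1}\rangle$, contradicting the hypothesis. The main technical points needing care are (i) verifying that the Borel spectral sequence for each factor $E\mathbb{Z}_2\times_{\mathbb{Z}_2}S^d$ collapses so that the product really has index exactly $\langle t_1^{d+1},\ldots,t_k^{d+1}\rangle$, and (ii) correctly identifying $e(V_h)=h_1t_1+\cdots+h_kt_k$ using the chosen generators $t_i\in H^1(B\mathbb{Z}_2^k;\mathbb{Z}_2)$; with $\mathbb{Z}_2$-coefficients both are routine but easy to slip on. These are standard inputs in~\cite{FH98,BK12} and the wider equivariant topology literature, so I would cite them rather than rederive them in full.
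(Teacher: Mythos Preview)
Your proposal is correct and is precisely the standard Fadell--Husseini index argument that the paper invokes: the paper does not prove this theorem itself but attributes it to~\cite{BK12} as an application of the ideal-valued index theory of~\cite{FH98}, which is exactly the machinery you outline. Your observation that ``$d+k$'' should read ``$d+1$'' is also correct.
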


The upper bound $\Delta(m;\ell/k)\leq  m\left(\sum_{j=0}^\ell \binom{k-1}{j}\right)$ mentioned in the introduction follows by considering the maximal degree of $p_{\ell,k}$ in any variable. For our improved upper bounds for $\Delta(m;\ell/k)$ when $\ell=2$ or $\ell=3$ and $k=4$, we show that for minimum $d$ such that $p_{\ell,k}^m\notin \langle t_1^{d+1},\ldots, t_k^{d+1}\rangle$, there is a polynomial $q$ corresponding to additional constraints on the hyperplanes such that $p_{\ell,k}^m \cdot q=t_1^d\cdots t^d$ in $\mathbb{Z}_2[t_1,\ldots, t_k]/(t_1^d,\ldots, t_k^d)$. For this, we appeal to a  Borsuk-Ulam result for arbitrary $\mathbb{Z}_2^k$-representations stated for instance in ~\cite{Si19}. As before, for any $h\in \mathbb{Z}_2^k$ we denote by $V_h=\R$ the 1-dimensional real $\mathbb{Z}_2^k$-representation defined by $g\cdot v=(1)^{\langle g, h\rangle} v$ for $g\in \mathbb{Z}_2^k$ and $v\in \R$. 

\begin{proposition}
\label{prop:Borsuk-Ulam}
Let $A=(a_{i,j})_{1\leq i\leq kd,\,1\leq j\leq k}$ be a $(kd\times k)$-matrix with $\mathbb{Z}_2$-coefficients, let $U=\oplus_{i=1}^{kd} V_{(a_{i,1},\ldots, a_{i,k})}$, and let $p_U=\prod_{i=1}^{kd} (a_{i,1}t_1+\cdots +a_{i,k}t_k)\in \mathbb{Z}_2[t_1,\ldots, t_k]/(t_1^{d+1},\ldots, t_k^{d+1})$. If $p_U= t_1^d\cdots t_k^d$, then any continuous $\mathbb{Z}_2^k$-equivariant map $F\colon (S^d)^k \rightarrow U$ has a zero. 
\end{proposition}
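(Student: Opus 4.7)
The plan is to argue by contradiction using the equivariant Euler class in $\mathbb{Z}_2$-equivariant cohomology with $\mathbb{Z}_2$ coefficients. Suppose no $\mathbb{Z}_2^k$-equivariant map $F\colon (S^d)^k \to U$ has a zero. Composing with radial retraction onto the unit sphere $S(U)$ yields a nowhere-zero $\mathbb{Z}_2^k$-equivariant section of the trivial bundle $(S^d)^k \times U \to (S^d)^k$, which on passing to the Borel construction produces a nowhere-zero section of the real vector bundle
\begin{equation*}
\xi_U \;=\; E\mathbb{Z}_2^k \times_{\mathbb{Z}_2^k} \bigl((S^d)^k \times U\bigr) \;\longrightarrow\; E\mathbb{Z}_2^k \times_{\mathbb{Z}_2^k} (S^d)^k.
\end{equation*}
Consequently the mod-$2$ Euler class $e(\xi_U) \in H^*_{\mathbb{Z}_2^k}\bigl((S^d)^k;\mathbb{Z}_2\bigr)$ must vanish.

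The next step is to identify both sides of this vanishing condition concretely. One has $H^*(B\mathbb{Z}_2^k;\mathbb{Z}_2) = \mathbb{Z}_2[t_1,\ldots,t_k]$, and since each one-dimensional summand $V_{(a_{i,1},\ldots,a_{i,k})}$ of $U$ has mod-$2$ Euler class equal to its first Stiefel--Whitney class $a_{i,1}t_1+\cdots+a_{i,k}t_k$, the Whitney sum formula gives $e_{\mathbb{Z}_2^k}(U) = p_U$ in $H^*(B\mathbb{Z}_2^k;\mathbb{Z}_2)$. For the configuration space, the product action of $\mathbb{Z}_2^k$ on $(S^d)^k$ factors coordinatewise, so the Borel construction splits as a $k$-fold Cartesian product of copies of $E\mathbb{Z}_2\times_{\mathbb{Z}_2} S^d$; since each factor has cohomology $\mathbb{Z}_2[t_i]/(t_i^{d+1})$, the K\"unneth formula yields
\begin{equation*}
H^*_{\mathbb{Z}_2^k}\bigl((S^d)^k;\mathbb{Z}_2\bigr) \;\cong\; \mathbb{Z}_2[t_1,\ldots,t_k]/(t_1^{d+1},\ldots,t_k^{d+1}),
\end{equation*}
with restriction from $H^*(B\mathbb{Z}_2^k;\mathbb{Z}_2)$ given by the evident quotient map.

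Combining these identifications, a zero-free $F$ would force $p_U = 0$ in this quotient ring; but the hypothesis asserts $p_U = t_1^d\cdots t_k^d$ there, which is the unique nonzero top-degree monomial surviving the quotient, yielding the required contradiction. The main obstacle in this plan is simply verifying the two identifications invoked above---namely that $e_{\mathbb{Z}_2^k}(U)$ is represented by the polynomial $p_U$ and that $H^*_{\mathbb{Z}_2^k}((S^d)^k;\mathbb{Z}_2)$ is the claimed polynomial quotient with restriction given by modding out. Both are standard consequences of Fadell--Husseini index theory \cite{FH98}, and once in place the remainder of the argument reduces to the one-line polynomial inspection above; this is essentially the formulation given in \cite{Si19}, which we quote for our applications.
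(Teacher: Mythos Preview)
Your argument is correct and is the standard Fadell--Husseini/Euler-class proof of this Borsuk--Ulam statement. Note that the paper does not actually supply its own proof of this proposition: it is stated as a known result and attributed to \cite{Si19} (with the underlying index theory from \cite{FH98}), so there is no in-paper argument to compare against---your sketch is precisely the argument one finds behind those citations.
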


\section{The Generalized Makeev Problem for $\ell=2$}
\label{sec:2}

In this section we consider the $\ell=2$ case of Question~\ref{quest:generalized Makeev} for arbitrary $k$. The following yields part (a) of Theorem~\ref{thm:upper} and in particular parts (a)--(b) of Theorem~\ref{cor:explicit}.

\begin{theorem}
\label{thm:2}
Let $k\geq 2$, $q\geq 0$, and $1\leq t \leq 2^q$ be integers and $d=2^q\cdot (k+1)-t$. For any $2^{q+1}-1$ masses on $\mathbb{R}^d$, there exists $k$ hyperplanes $H_1,\ldots, H_k$ such that \begin{compactenum}[(i)]
 \item any two of $H_1,\ldots, H_k$ equipartition $2^{q+1}-t$ of the masses,
  \item any two of $H_2,\ldots, H_k$ equipartition the remaining $t-1$ masses. 

    \end{compactenum} 
 \end{theorem}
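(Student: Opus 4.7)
My plan is to apply Proposition~\ref{prop:Borsuk-Ulam} to an augmented version of the natural test map. For each of the first $m_1 := 2^{q+1}-t$ masses include every Fourier component $\F_{i,h}$ with $h \in E_{2,k}$, encoding condition (i); for each of the remaining $m_2 := t-1$ masses include only the $\F_{i,h}$ with $h$ in the image of $E_{2,k-1}$ under the embedding $\mathbb{Z}_2^{k-1}\hookrightarrow \mathbb{Z}_2^k$ sending $(h_2,\ldots,h_k)\mapsto (0,h_2,\ldots,h_k)$, encoding condition (ii). By Proposition~\ref{prop:Fourier}, the zeros of the resulting $\mathbb{Z}_2^k$-equivariant map $\F\colon (S^d)^k\to U$ correspond bijectively to configurations of $k$ hyperplanes satisfying (i) and (ii). The associated polynomial is $P = p_{2,k}^{m_1}\cdot \bigl(p_{2,k-1}^{(2)}\bigr)^{m_2}$, where $p_{2,k-1}^{(2)}$ denotes the analog of $p_{2,k-1}$ in the variables $t_2,\ldots,t_k$, and a direct calculation using $m_1+m_2=2^{q+1}-1$ yields $\deg P = kd-\binom{k}{2}$.

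Since $\dim U = \deg P < kd$, I augment $\F$ by $\binom{k}{2}$ additional equivariant maps into one-dimensional $\mathbb{Z}_2^k$-representations. Such equivariant maps into any $V_h$ are supplied by polynomial functions of the coordinates of the $x_i$'s, so the only content is choosing the characters $h_j$ so that, setting $Q := \prod_{j}(h_{j,1}t_1+\cdots+h_{j,k}t_k)$, the identity
$$P\cdot Q \;\equiv\; t_1^d\cdots t_k^d \pmod{(t_1^{d+1},\ldots,t_k^{d+1})}$$
holds in $\mathbb{Z}_2[t_1,\ldots,t_k]$. Because $P\cdot Q$ is homogeneous of degree $kd$ and $t_1^d\cdots t_k^d$ is the unique degree-$kd$ monomial with every exponent at most $d$, this identity is equivalent to the coefficient of $t_1^d\cdots t_k^d$ in $P\cdot Q$ being odd. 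Taking the candidate $Q := \prod_{i=2}^{k} t_i^{i-1}$ (which has the required degree $\binom{k}{2}$ and is a product of the linear forms $t_i$), it suffices to show that $P$ has odd coefficient at $t_1^{d}t_2^{d-1}\cdots t_k^{d-(k-1)}$.

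The main obstacle is this coefficient computation. Writing
$$P \;=\; t_1^{m_1}(t_2\cdots t_k)^{N}\prod_{j=2}^{k}(t_1+t_j)^{m_1}\prod_{2\leq i<j\leq k}(t_i+t_j)^{N},\qquad N := 2^{q+1}-1,$$
and expanding each power factor, the desired coefficient equals the number, modulo $2$, of integer tuples $(r_{ij})_{1\leq i<j\leq k}$ with $r_{1j}\in [0,m_1]$ and $r_{ij}\in [0,N]$ for $i\geq 2$, weighted by $\prod_{j=2}^{k}\binom{m_1}{r_{1j}}\prod_{2\leq i<j\leq k}\binom{N}{r_{ij}}$, whose induced $t_j$-exponents match $d-(j-1)$ for every $j\geq 2$. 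This matching imposes the linear system $r_{1j} + \sum_{2\leq i<j} r_{ij} - \sum_{j<l\leq k} r_{jl} = 2^q(2j-k-1)$ for $j=2,\ldots,k$. Since $N$ has binary expansion of all ones, every $\binom{N}{r}$ is odd by Lucas' theorem, and the hypothesis $1\leq t\leq 2^q$ forces bit $q$ of $m_1$ to be set; hence the canonical solution $r_{ij} = 2^q$ for all $i<j$ satisfies the system and contributes an odd term. A parity analysis of the remaining admissible solutions completes the verification, after which Proposition~\ref{prop:Borsuk-Ulam} yields a zero of the augmented equivariant map and hence the desired $k$ hyperplanes.
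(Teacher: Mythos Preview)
Your overall strategy is the paper's: augment the test map so that $\dim U = kd$ and verify the polynomial identity $P\cdot Q \equiv t_1^d\cdots t_k^d$ in $\mathbb{Z}_2[t_1,\ldots,t_k]/(t_1^{d+1},\ldots,t_k^{d+1})$, then invoke Proposition~\ref{prop:Borsuk-Ulam}. Your choice of augmenting factor $Q=\prod_{i=2}^k t_i^{\,i-1}$ differs from the paper's (the paper takes $\prod_{(i,j)\in\mathcal{S}}(t_i+t_j)\cdot t_2\cdots t_k$ with $\mathcal{S}=\{(i,j):j>i+1\}$), but both have degree $\binom{k}{2}$ and, as it turns out, both work. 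Your $Q$ is in fact the ``cascade'' factor the paper uses in the orthogonal companion Theorem~\ref{thm:2 orthogonal}.

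There is, however, a genuine gap at the last step. Exhibiting \emph{one} odd-weighted solution $r_{ij}=2^q$ to your linear system does not show that the coefficient of $t_1^d t_2^{d-1}\cdots t_k^{d-(k-1)}$ in $P$ is odd. Your system has $\binom{k}{2}$ unknowns and only $k-1$ equations, hence $\binom{k-1}{2}$ free parameters; and since $N=2^{q+1}-1$ makes every $\binom{N}{r_{ij}}$ odd, there are in general many odd-weighted solutions. The sentence ``a parity analysis of the remaining admissible solutions completes the verification'' is precisely where the entire content of the computation lies, and it is not a routine consequence of Lucas' theorem.

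The clean way to close this gap---and what the paper actually does---is to abandon the raw binomial expansion and use the Vandermonde identity $p_{2,k}=\sum_{\sigma\in\mathfrak S_k} t_{\sigma(1)}^k\cdots t_{\sigma(k)}^1$. Writing $P=p_{2,k}^{2^q}\cdot p_{2,k}^{2^q-t}\cdot (p_{2,k-1}^{(2)})^{t-1}$ via Frobenius, one argues by iterated degree bounds modulo $(t_1^{d+1},\ldots,t_k^{d+1})$: first $\sigma(1)=1$ is forced, then $\tau(k)=1$ in every $\tau$-factor, then $\sigma(2)=2$, $\tau(k-1)=2$, $\phi(k)=2$, and so on. This argument applies verbatim to your $Q$ and shows that exactly one monomial survives, which \emph{is} the parity statement you need---your canonical solution is not merely one contributor but the unique one.
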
 

 For instance, given three masses on $\R^6$, Theorem~\ref{thm:2} gives three hyperplanes, any two of which equipartition two of the masses and two of which equipartition the third. This can be compared to the known results $\Delta(3;2)=5$~\cite{BFHZ18} and  $7\leq \Delta(3;3)\leq 9$~\cite{MLVZ06} for Question~\ref{quest:G-H-R}.

We may use the $\binom{k}{2}$ remaining degrees of freedom in Theorem~\ref{thm:2} to ensure that all but $k-1$ of the pairs of hyperplanes are pairwise orthogonal, and in addition that each of $H_2,\ldots, H_k$ bisects any prescribed further mass. When $2\leq t\leq 2^q$, one may ensure pairwise orthogonality of the hyperplanes by removing a mass.
\begin{theorem}
\label{thm:2 orthogonal} 
Let $k\geq 2$, $q\geq 1$, and $2\leq t \leq 2^q$ be integers and let $d=2^q\cdot (k+1)-t$. For any $2^{q+1}-2$ masses on $\mathbb{R}^d$, there exist $k$ pairwise orthogonal hyperplanes $H_1,\ldots, H_k$ such that 
\begin{compactenum}[(i)]
\item any two of $H_1,\ldots, H_k$ equipartition $2^{q+1}-t$ of the masses,
\item any two of $H_2,\ldots, H_k$ equipartition the remaining $t-2$ masses.\end{compactenum}

\end{theorem}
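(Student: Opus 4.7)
The plan is to follow the recipe of Theorem~\ref{thm:2}, swapping one restricted-equipartition mass for pairwise orthogonality so as to produce a polynomial amenable to Proposition~\ref{prop:Borsuk-Ulam}. First, I form the equivariant test map
\[
\mathcal{F} \oplus \mathcal{F}' \oplus \pi \colon (S^d)^k \longrightarrow U_{2,k}^{\oplus (2^{q+1}-t)} \oplus (U')^{\oplus (t-2)} \oplus U(\mathcal{O}),
\]
where $U' = \bigoplus_{h \in E_{2,k},\, h_1 = 0} V_h$ is the $\mathbb{Z}_2^k$-submodule of $U_{2,k}$ capturing equipartition by any two of $H_2, \ldots, H_k$, and $U(\mathcal{O}) = \bigoplus_{1 \leq r < s \leq k} V_{\mathbf{e}_r + \mathbf{e}_s}$ is the orthogonality module from~(\ref{eqn:orthogonality}). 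By Proposition~\ref{prop:Fourier} and the construction of $\pi$, a zero of this map is precisely a family of $k$ pairwise orthogonal hyperplanes realizing conclusions~(i) and~(ii).

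To apply Proposition~\ref{prop:Borsuk-Ulam}, I enlarge the target by $\binom{k}{2}$ auxiliary linear constraints, the same freedom used in Theorem~\ref{thm:2}; for instance, bisections of $k-1$ prescribed additional masses by $H_2, \ldots, H_k$ together with $\binom{k-1}{2}$ further free constraints. The problem then reduces to verifying the polynomial identity
\[
P \cdot q = t_1^d t_2^d \cdots t_k^d \quad \text{in} \quad \mathbb{Z}_2[t_1, \ldots, t_k]/(t_1^{d+1}, \ldots, t_k^{d+1}),
\]
where $P = p_{2,k}^{2^{q+1}-t} \cdot p_{2,k-1}(t_2, \ldots, t_k)^{t-2} \cdot V$ with $V = \prod_{1 \leq i < j \leq k}(t_i+t_j)$, and where $q$ is the product of linear forms corresponding to the auxiliary constraints. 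A direct count gives $\deg(P \cdot q) = kd$, matching the target monomial.

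The main obstacle is the polynomial identity itself. Compared with the polynomial treated in the proof of Theorem~\ref{thm:2}, $P \cdot q$ swaps a single factor of $p_{2,k-1}(t_2, \ldots, t_k)$ for the full orthogonality factor $V$, both of degree $\binom{k}{2}$. I plan to execute this swap via the algebraic identity
\[
V \cdot \prod_{i=2}^k t_i = p_{2,k-1}(t_2, \ldots, t_k) \cdot \prod_{j=2}^k (t_1 + t_j),
\]
combined with the characteristic-$2$ Frobenius relation $(a+b)^{2^r} = a^{2^r} + b^{2^r}$, so as to reorganize $P \cdot q$ into the form already controlled in the proof of Theorem~\ref{thm:2}. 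The delicate step is controlling which monomials survive modulo the truncation ideal after the substitution; I expect this to reduce to a direct combinatorial check in $\mathbb{Z}_2[t_1, \ldots, t_k]/(t_i^{d+1})$ parallel to the Stiefel--Whitney-style expansion carried out for Theorem~\ref{thm:2}, with the hypothesis $t \geq 2$ ensuring that enough copies of $p_{2,k-1}(t_2, \ldots, t_k)$ remain for the reduction to be run verbatim.
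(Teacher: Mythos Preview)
Your overall strategy—trade one copy of $U_{2,k-1}$ for the full orthogonality module $U(\mathcal{O})$, pad the target to dimension $kd$, and invoke Proposition~\ref{prop:Borsuk-Ulam}—is exactly the paper's. The divergence is in how the polynomial identity is verified, and here your plan has a gap.

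The paper does not attempt to reduce back to Theorem~\ref{thm:2}. It instead makes a specific choice for the $\binom{k}{2}$ auxiliary constraints: the \emph{cascade} $U_{1,k-1}\oplus U_{1,k-2}\oplus\cdots\oplus U_{1,1}$, contributing the monomial $t_2 t_3^2\cdots t_k^{k-1}$. The resulting polynomial is
\[
p_{2,k}^{2^q}\cdot p_{2,k}^{2^q-t}\cdot p_{2,k-1}^{\,t-2}\cdot \Bigl(\textstyle\sum_{\psi\in\mathfrak{S}_k}t_{\psi(1)}^{k-1}\cdots t_{\psi(k)}^{0}\Bigr)\cdot t_2 t_3^2\cdots t_k^{k-1},
\]
and the monomial $t_2 t_3^2\cdots t_k^{k-1}$ breaks the symmetry of the Vandermonde factor so that degree bounds force $\sigma$, every $\tau$, every $\phi$, and $\psi$ to the identity permutation in one sweep. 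The exponent check is a single sentence.

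Your proposed route—use $V\cdot\prod_{i\ge 2}t_i = p_{2,k-1}(t_2,\ldots,t_k)\cdot\prod_{j\ge 2}(t_1+t_j)$ to manufacture an extra $p_{2,k-1}$ and thereby ``land'' on the Theorem~\ref{thm:2} polynomial—does not actually land there. After the swap you carry the factor $\prod_{j=2}^k(t_1+t_j)$, whereas Theorem~\ref{thm:2} carries the non-consecutive-pair product $p_{\mathcal{S}}=\prod_{j\ne i+1}(t_i+t_j)$; these differ already for $k=3$, so no literal reduction is available and you still owe both a precise choice of the remaining $\binom{k-1}{2}$ constraints and a fresh exponent analysis. (Your description of the auxiliary block is also ambiguous: ``bisections of $k-1$ additional masses by $H_2,\ldots,H_k$'' read literally has degree $(k-1)^2$, not $k-1$.) The swap identity is correct but buys nothing; the cleanest fix is to take $q=t_2 t_3^2\cdots t_k^{k-1}$ and run the direct exponent argument as the paper does.
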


To exhaust the remaining degrees of freedom in Theorem~\ref{thm:2 orthogonal}, we specify that for any additional $k-1$ masses $\mu_1,\ldots, \mu_k$ on $\R^d$ one has that each hyperplane of $\{H_{i+1},\ldots, H_k\}$ bisects $\mu_i$ for all $1\leq i \leq k-1$.

\subsection{Proof of Theorems~\ref{thm:2} and ~\ref{thm:2 orthogonal}} 

We first set some notation. As before, let $$p_{\ell,k}= \prod_{1\leq a_1+\cdots + a_k\leq \ell} (a_1t_1+\cdots +a_kt_k).$$ Thus $p_{\ell,k}=\prod_{j=1}^\ell r_{j,k},$ where
$$r_{j,k}=\prod_{a_1+\cdots+a_k=j}(a_1t_1+\cdots +a_kt_k).$$ When there is no risk of ambiguity, for $i\geq 2$ we shall let $p_{\ell,k-i+1}=\prod_{1\leq a_i+\cdots + a_k\leq \ell} (a_it_i+\cdots +a_kt_k)$ denote the polynomial corresponding to the equipartition of a single mass by any $\ell$ of the hyperplanes $H_i,\ldots, H_k$, and likewise for $r_{j,k-i+1}$. 

For orthogonality, let $\mathcal{S}\subseteq \mathcal{O}=\{(i,j)\mid 1\leq i<j\leq k\}$ and $U(\mathcal{S})=\oplus_{(i,j)\in S} V_{\mathbf{e}_i+\mathbf{e}_j}$ and set $$p_{\mathcal{S}}=\prod_{(i,j)\in \mathcal{S}}(a_it_i+a_jt_j).$$ We observe that $$p_{\mathcal{O}}=r_{2,k}=\sum_{\sigma\in \mathfrak{S}_k}t_{\sigma(1)}^{k-1}t_{\sigma(2)}^{k-2}\cdots t_{\sigma(k)}^0$$ is the classical Vandermonde determinant, so $$p_{2,k}=r_{1,k}\cdot r_{2,k}=\sum_{\sigma\in \mathfrak{S}_k}t_{\sigma(1)}^kt_{\sigma(2)}^{k-1}\cdots t_{\sigma(k)}.$$

\begin{proof}[Proof of Theorem~\ref{thm:2}]
 Let $m=2^{q+1}-t$, where $1\leq t\leq 2^q$, and let $d=2^q\cdot (k+1)-t$. We let $\mathcal{S}=\{(i,j)\in \mathcal{O}\mid i\neq j+1\}$. By the configuration-space/test-map scheme of Section ~\ref{sec:topology},  Theorem~\ref{thm:2} follows provided any continuous $F\colon (S^d)^k\rightarrow U\colon=U_{2,k}^{\oplus m}\oplus U_{2,k-1}^{\oplus (t-1)}\oplus U(\mathcal{S})\oplus U_{1,k-1}$ has a zero. 
 
We have $p_{2,k}^m=p_{2,k}^{2^q}\cdot p_{2,k}^{2^q-t}$, and since the coefficients are in $\mathbb{Z}_2$ this gives \begin{align*} p_U=
 \sum_{\sigma\in \mathfrak{S}_k}t_{\sigma(1)}^{2^q k}t_{\sigma(2)}^{2^q(k-1)} \cdots t_{\sigma(k)}^{2^q}\cdot (\sum_{\tau\in \mathfrak{S}_k}t_{\tau(1)}^kt_{\tau(2)}^{k-1}\cdots t_{\tau(k)})^{2^q-t}\cdot (\sum_{\phi\in \mathfrak{S}_{k-1}}t_{\phi(2)}^{k-1}\cdots t_{\phi(k)})^{t-1}\\ \cdot (t_1+t_3)\cdots (t_1+t_k)\cdots (t_{k-2}+x_k) \cdot t_2\cdots t_k.\end{align*} We must have $\sigma(1)=1$ by exponent considerations. Indeed, supposing that $\sigma(1)=i$ for some $i\geq 2$ and letting $\deg(t_i)$ denote the exponent of $t_i$ gives $\deg(t_i)\geq 2^q\cdot k +(2^q-t)+1>d$. We must also have $\tau(k)=1$ in each of the sums of the second product, since again $\deg(t_1)>d$ otherwise. As any further contribution of $t_1$ also results in $\deg(t_1)>d$, we have $t_1^d(t_1+t_3)\cdots (t_1+t_k)=t_1^dt_3\cdots t_k$. Thus  
\begin{align*}p_U=t_1^d
\sum_{\sigma\in \mathfrak{S}_{k-1}}t_{\sigma(2)}^{2^q(k-1)}t_{\sigma(3)}^{2^q(k-2)} \cdots t_{\sigma(k)}^{2^q}\cdot (\sum_{\tau\in \mathfrak{S}_{k-1}}t_{\tau(2)}^k
t_{\tau(3)}^{k-1}\cdots t_{\tau(k)}^2)^{2^q-t}\cdot (\sum_{\phi\in \mathcal{S}_{k-1}}t_{\phi(2)}^{k-1}\cdots t_{\phi(k)})^{t-1}\\ \cdot (t_2+t_4)\cdots (t_2+t_k)\cdots (t_{k-2}+t_k) \cdot t_2t_3^2\cdots t_k^2.\end{align*}
\noindent Continuing in this fashion, exponent considerations give $\sigma(2)=\tau(k)=\phi(k)=2$ for all $\tau$ and $\phi$, and $t_2^d(t_2+t_4)\cdots (t_2+t_k)=t_2^dt_4\cdots t_k.$ Thus \begin{align*}p_U=t_1^dt_2^d\sum_{\sigma\in \mathfrak{S}_{k-2}}t_{\sigma(3)}^{2^q(k-2)}\cdots t_{\sigma(k)}^{2^q}\cdot (\sum_{\tau\in \mathfrak{S}_{k-2}}t_{\tau(3)}^kt_{\tau(3)}^{k-1}\cdots t_{\tau(k)}^3)^{2^q-t}\cdot (\sum_{\phi\in \mathfrak{S}_{k-2}}t_{\phi(3)}^{k-1}\cdots t_{\phi(k)}^2)^{t-1}\\ \cdot (t_3+t_5)\cdots (t_3+t_k)\cdots (t_{k-2}+t_k) \cdot t_3^2t_4^3\cdots t_k^3.\end{align*}

\noindent Continuing inductively yields $p_U=t_1^d\cdots t_k^d$, so $F$ has a zero by Proposition~\ref{prop:Borsuk-Ulam}.\end{proof}

\begin{proof}[Proof of Theorem~\ref{thm:2 orthogonal}]
Let $m=2^{q+1}-t$ with $2\leq t\leq 2^q$, and let $d=2^q(k+1)-t$. As before, we show that any $\mathbb{Z}_2^k$-equivariant map $F\colon (S^d)^k\rightarrow U\colon= U_{2,k}^m\oplus U_{2,k-1}^{\oplus(t-2)}\oplus U(\mathcal{O})\oplus U_{1,k-1}\oplus U_{2,k-2}\oplus \cdots \oplus U_{1,1}$ has a zero.  We have

\begin{align*}
p_U= \sum_{\sigma\in \mathfrak{S}_k}t_{\sigma(1)}^{2^q k}t_{\sigma(2)}^{2^q(k-1)} \cdots t_{\sigma(k)}^{2^q}\cdot (\sum_{\tau\in \mathfrak{S}_k}t_{\tau(1)}^kt_{\tau(2)}^{k-1}\cdots t_{\tau(k)})^{2^q-t}\cdot (\sum_{\phi\in \mathfrak{S}_{k-1}} t_{\phi(2)}^{k-1}\cdots t_{\phi(k)})^{t-2}\\ \cdot \sum_{\psi\in \mathfrak{S}_k} t_{\psi(1)}^{k-1}t_{\psi(2)}^{k-2}\cdots t_{\psi(k)}^0\cdot t_2 t_3^2\cdots t_k^{k-1}.   \end{align*}
Exponent considerations immediately give $\sigma(1)=\tau(k)=\psi(k)=1$ for all $\tau$, $\sigma(2)=\tau(k-1)=\phi(k)=\psi(k-1)=2$ for all $\tau$ and all $\phi$, and so on. Thus $p_U=t_1^d\cdots t_k^d$, so $F$ has a zero by Proposition~\ref{prop:Borsuk-Ulam}.\end{proof}

\section{Results for $\ell=3$ and $k=4$}
\label{sec:3}

For equipartitions by any 3 of 4 hyperplanes, the following theorems give part (b) of Theorems~\ref{thm:upper} and ~\ref{thm:upper perp}, respectively. 

\begin{theorem}
\label{thm:3} Let $q\geq 0$ and $1\leq t \leq 2^q$ be integers and let $d=7\cdot 2^q -2t$. For any $2^{q+1}-1$ masses on $\mathbb{R}^d$, there exists four hyperplanes $H_1,H_2,H_3, H_4$ such that 
\begin{compactenum}[(i)]
\item any three of $H_1, H_2, H_3, H_4$ equipartitions $2^{q+1}-t$ of the masses and
\item any two of $H_2, H_3, H_4$ equipartitions the remaining $t-1$ masses.
\end{compactenum}
\end{theorem}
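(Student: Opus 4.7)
The plan is to follow the polynomial-identity strategy of Theorems~\ref{thm:2} and~\ref{thm:2 orthogonal}, reducing Theorem~\ref{thm:3} via the configuration-space/test-map paradigm of Section~\ref{sec:topology} to the assertion that a certain $\mathbb{Z}_2^4$-equivariant test map $F\colon (S^d)^4\to U$ has a zero. Setting $m=2^{q+1}-t$, I would take
$$U=U_{3,4}^{\oplus m}\oplus U_{2,3}^{\oplus (t-1)}\oplus U',$$
where $U_{2,3}$ is interpreted in the coordinates $t_2,t_3,t_4$ (so that it encodes equipartition by any two of $H_2,H_3,H_4$), and $U'$ is an auxiliary $\mathbb{Z}_2^4$-representation of real dimension $6$ chosen so that $\dim U = 14m + 6(t-1) + 6 = 28\cdot 2^q - 8t = 4d$. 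A zero of $F$ then produces four hyperplanes satisfying (i) and (ii) together with any extra constraints encoded by $U'$; by Proposition~\ref{prop:Borsuk-Ulam}, such a zero exists as soon as the polynomial identity
$$p_U = p_{3,4}^m\cdot p_{2,3}^{t-1}\cdot q = t_1^d t_2^d t_3^d t_4^d$$
holds in $\mathbb{Z}_2[t_1,\ldots,t_4]/(t_1^{d+1},\ldots,t_4^{d+1})$, with $q=p_{U'}$ a product of six linear forms.

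The first substantive task is to pin down the correct $U'$. A natural candidate when $t\geq 2$ is $U'=U(\mathcal{O})$, so that $q=r_{2,4}=\prod_{i<j}(t_i+t_j)$ is the degree-$6$ Vandermonde; this choice, when it succeeds, would simultaneously prove Theorem~\ref{thm:upper perp}(b). For $t=1$---in particular for small $q$---powers of $r_{2,4}$ can vanish modulo $(t_1^{d+1},\ldots,t_4^{d+1})$, forcing one instead to mix a proper subset of orthogonality relations with bisection constraints, in the spirit of the $U(\mathcal{S})\oplus U_{1,k-1}$ choice in Theorem~\ref{thm:2}. To verify the polynomial identity for the chosen $U'$, I would factor $p_{3,4}=r_{1,4}\cdot r_{2,4}\cdot r_{3,4}$ and $p_{2,3}=r_{1,3}\cdot r_{2,3}$, extracting the immediate monomial contribution $r_{1,4}^m r_{1,3}^{t-1} = t_1^m(t_2t_3t_4)^{2^{q+1}-1}$ and reducing the problem to controlling $r_{2,4}^m r_{3,4}^m r_{2,3}^{t-1} q$. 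Writing $m=2^q+(2^q-t)$ and applying the Frobenius identity $p(t)^{2^q}=p(t^{2^q})$ over $\mathbb{Z}_2$ separates the $2^q$-th powers into sums of monomials raised to the $2^q$-th power, after which I would perform the same style of exponent bookkeeping as in the proof of Theorem~\ref{thm:2}: inductively forcing the leading $t_i$-exponent of each surviving summand until the only one left is $t_1^d t_2^d t_3^d t_4^d$.

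The main obstacle, beyond selecting $U'$, is that $r_{3,4}=(t_1+t_2+t_3)(t_1+t_2+t_4)(t_1+t_3+t_4)(t_2+t_3+t_4)$ lacks the clean Vandermonde-style expansion $r_{2,4}=\sum_{\sigma\in\mathfrak{S}_4}t_{\sigma(1)}^3 t_{\sigma(2)}^2 t_{\sigma(3)}$ that made the $\ell=2$ case tractable. Writing $s=t_1+t_2+t_3+t_4$ gives $r_{3,4}=s^2 e_2+s e_3+e_4$ in elementary symmetric polynomials $e_j$, but powering this and combining with $r_{2,4}^m$ produces a multitude of cross terms whose behavior modulo $(t_1^{d+1},\ldots,t_4^{d+1})$ must be tracked carefully. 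Managing these cross terms---and using them as a guide to the correct auxiliary $U'$---is the computational heart of the argument. Once the polynomial identity is established, Proposition~\ref{prop:Borsuk-Ulam} supplies a zero of $F$, and the configuration-space/test-map correspondence of Section~\ref{sec:topology} delivers the required four hyperplanes.
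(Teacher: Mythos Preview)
Your framework is correct and matches the paper's: the same target $U=U_{3,4}^{\oplus m}\oplus U_{2,3}^{\oplus(t-1)}\oplus U'$ with $\dim U'=6$, and the same style of auxiliary constraints (indeed the paper uses exactly your ``$t=1$'' choice, taking $U'=U(\mathcal{S})\oplus U_{1,3}$ with $\mathcal{S}=\{(1,3),(1,4),(2,4)\}$, for all $t$; the full $U(\mathcal{O})$ is reserved for the orthogonal Theorem~\ref{thm:3 orthogonal}).

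Where you diverge is in the treatment of $r_{3,4}$. You anticipate a laborious cross-term analysis from $r_{3,4}=s^2e_2+se_3+e_4$, but the paper sidesteps this entirely. It first computes $p_{3,4}$ in closed form (Proposition~\ref{prop:p_{3,4}}), obtaining
\[
p_{3,4}=h_1+h,\qquad h_1=\sum_{\sigma}t_{\sigma(1)}^5t_{\sigma(2)}^4t_{\sigma(3)}^3t_{\sigma(4)}^2=(t_1t_2t_3t_4)\cdot p_{2,4},
\]
with the remaining three symmetric sums in $h$ each having a maximal exponent $\geq 6$. Writing $p_{3,4}^m=(h_1^{2^q}+h^{2^q})\,p_{3,4}^{2^q-t}$, one sees that every monomial of $h^{2^q}\cdot p_{3,4}^{2^q-t}$ has some exponent at least $6\cdot 2^q+(2^q-t)=7\cdot 2^q-t>d$, so $p_{3,4}^m\equiv h_1^m=(t_1t_2t_3t_4)^m\,p_{2,4}^m$ in the quotient. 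At this point the computation \emph{is} literally the $k=4$ case of Theorem~\ref{thm:2}, and no cross-term bookkeeping from $r_{3,4}$ is ever needed. Your plan would in principle reach the same identity, but you should replace the anticipated ``computational heart'' with this two-line reduction.
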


\begin{theorem}
\label{thm:3 orthogonal} 
Let $q\geq 1$ and $2\leq t \leq 2^q$ be integers and let $d=7\cdot 2^q-2t$. Given any $2^{q+1}-2$ masses on $\mathbb{R}^d$, there exists $4$ pairwise orthogonal hyperplanes $H_1, H_2, H_3, H_4$ such that \begin{compactenum}[(i)]
\item any three of $H_1, H_2, H_3, H_4$ equipartition $2^{q+1}-t$ of the masses and 
\item  any two of $H_2,H_3, H_4$ equipartition the remaining $t-2$ masses. \end{compactenum}\end{theorem}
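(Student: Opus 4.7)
The plan is to apply the configuration-space/test-map paradigm of Section~\ref{sec:topology}, in exactly the spirit of the proofs of Theorem~\ref{thm:2 orthogonal} and Theorem~\ref{thm:3}. Set $m = 2^{q+1}-t$ and $d = 7\cdot 2^q - 2t$, and consider the $\mathbb{Z}_2^4$-representation
\[
U = U_{3,4}^{\oplus m} \oplus U_{2,3}^{\oplus (t-2)} \oplus U(\mathcal{O}) \oplus U_{1,3} \oplus U_{1,2} \oplus U_{1,1},
\]
where $U_{2,3}$ and $U_{1,3}$ are built from the subgroup $\langle \mathbf{e}_2,\mathbf{e}_3,\mathbf{e}_4 \rangle \le \mathbb{Z}_2^4$, while $U_{1,2}$ and $U_{1,1}$ are built from $\langle \mathbf{e}_3,\mathbf{e}_4 \rangle$ and $\langle \mathbf{e}_4 \rangle$ respectively. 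The tail summands $U_{1,3} \oplus U_{1,2} \oplus U_{1,1}$ impose additional bisection conditions on three auxiliary masses $\nu_1, \nu_2, \nu_3$ (namely, $\nu_i$ is bisected by each of $H_{i+1},\ldots,H_4$ for $i=1,2,3$), used to exhaust the six remaining degrees of freedom. A dimension count gives $\dim U = 14m + 6(t-2) + 6 + 3 + 2 + 1 = 4d$, matching $\dim(S^d)^4$.

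By Proposition~\ref{prop:Borsuk-Ulam}, the theorem follows once one shows that
\[
p_U = p_{3,4}^m \cdot p_{2,3}^{t-2} \cdot p_{\mathcal{O}} \cdot t_2 t_3^2 t_4^3 = t_1^d t_2^d t_3^d t_4^d
\]
in $\mathbb{Z}_2[t_1,t_2,t_3,t_4]/(t_1^{d+1},\ldots,t_4^{d+1})$, where $p_{2,3}$ is a polynomial in $t_2,t_3,t_4$ only and $p_{\mathcal{O}} = r_{2,4} = \prod_{1 \le i<j \le 4}(t_i+t_j)$ is the four-variable Vandermonde. Using the factorization $p_{3,4} = r_{1,4} \cdot r_{2,4} \cdot r_{3,4}$ together with $m = 2^q + (2^q - t)$ and Frobenius in characteristic $2$ (so that $p_{3,4}^{2^q} = p_{3,4}(t_1^{2^q},\ldots,t_4^{2^q})$), I would execute a peeling argument directly analogous to the one in the proof of Theorem~\ref{thm:2 orthogonal}: exponent considerations on $t_1$ in each factor, combined with the $i-1$ contribution of $t_i$ from the tail monomial $t_2 t_3^2 t_4^3$, force $t_1$ to take its maximum possible value in every surviving monomial, yielding a clean $t_1^d$ factor. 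Peeling off $t_2$, then $t_3$, and finally $t_4$ inductively leaves precisely $t_1^d t_2^d t_3^d t_4^d$.

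The main technical obstacle is the presence of the factor $r_{3,4}(t) = \prod_{j=1}^{4}(t_1 + t_2 + t_3 + t_4 - t_j)$, which unlike $r_{1,4}$ and $r_{2,4}$ is neither a monomial nor a Vandermonde and contributes up to degree $3$ in each variable. Its characteristic-$2$ simplification $r_{3,4} = e_1^2 e_2 + e_1 e_3 + e_4$ (with $e_i$ the $i$-th elementary symmetric polynomial in $t_1,\ldots,t_4$) allows a careful accounting of which terms survive the $t_i^{d+1}$ reductions at each peeling step. Once it is verified that the combined $t_i$-contributions from $r_{3,4}^{2^q}$, $r_{3,4}^{2^q-t}$, and the extra Vandermonde $r_{2,4}$ arising from $p_{\mathcal{O}}$ respect the exponent budget in every surviving monomial, the inductive peeling structure of the proof of Theorem~\ref{thm:2 orthogonal} applies directly and yields the desired identity $p_U = t_1^d t_2^d t_3^d t_4^d$.
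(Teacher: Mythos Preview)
Your choice of the representation $U = U_{3,4}^{\oplus m}\oplus U_{2,3}^{\oplus(t-2)}\oplus U(\mathcal{O})\oplus U_{1,3}\oplus U_{1,2}\oplus U_{1,1}$ and of the cascade bisection conditions is exactly what the paper uses, and your identification of $p_U = p_{3,4}^m\cdot p_{2,3}^{t-2}\cdot p_{\mathcal{O}}\cdot t_2t_3^2t_4^3$ is correct. The divergence is entirely in how you propose to handle $p_{3,4}^m$.

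The paper does \emph{not} work with the factorization $p_{3,4}=r_{1,4}r_{2,4}r_{3,4}$ or with the symmetric-polynomial expression for $r_{3,4}$. Instead it first computes the full monomial expansion (Proposition~\ref{prop:p_{3,4}}), writing $p_{3,4}=h_1+h$ with
\[
h_1=\sum_{\sigma\in\mathfrak{S}_4} t_{\sigma(1)}^5 t_{\sigma(2)}^4 t_{\sigma(3)}^3 t_{\sigma(4)}^2
\]
and $h$ the remaining three symmetrized blocks, each of which has some exponent $\geq 6$. Splitting $p_{3,4}^m=(h_1^{2^q}+h^{2^q})\,p_{3,4}^{2^q-t}$ and observing that every monomial of $h^{2^q}\cdot p_{3,4}^{2^q-t}$ has some exponent $\geq 6\cdot 2^q+(2^q-t)>d$, one obtains
\[
p_{3,4}^m \equiv h_1^m = (t_1t_2t_3t_4)^m\cdot p_{2,4}^m \pmod{(t_1^{d+1},\ldots,t_4^{d+1})}.
\]
This eliminates $r_{3,4}$ entirely: the polynomial $p_U$ becomes literally the $k=4$ instance of the polynomial from Theorem~\ref{thm:2 orthogonal}, multiplied by the monomial $(t_1t_2t_3t_4)^m$, and since $d=7\cdot 2^q-2t=(5\cdot 2^q-t)+m$ the exponent budget shifts by exactly $m$ in each variable. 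The peeling argument is then \emph{identical} to the one already carried out there.

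Your route through $r_{3,4}=e_1^2e_2+e_1e_3+e_4$ is a valid identity, and in principle one could push a direct peeling through, but you have explicitly deferred the heart of the matter (``once it is verified that the combined $t_i$-contributions \ldots\ respect the exponent budget''). That verification is not a formality: since $2^q-t$ is not a power of $2$, the factors $r_{3,4}^{2^q-t}$ do not split cleanly under Frobenius, so the cross terms require genuine bookkeeping that you have not supplied. The paper's monomial-expansion trick sidesteps all of this, and you should use it.
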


When $t=2$, one may strengthen Theorem~\ref{thm:3} as follows: 

\begin{proposition}
\label{prop:3} 
Let $q\geq 0$. Given $2^{q+1}-1$ masses on $\R^{7\cdot 2^q-4}$, there are four hyperplanes $H_1,H_2,H_3,H_4$ such that $2^{q+1}-2$ of the masses are equipartitioned by any three of the four hyperplanes and the remaining mass is equipartitioned by $H_2, H_3$, and $H_4$.
\end{proposition}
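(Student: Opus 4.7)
The plan is to apply the configuration-space/test-map paradigm of Section~\ref{sec:topology} together with the polynomial Borsuk--Ulam criterion of Proposition~\ref{prop:Borsuk-Ulam}. For $q=0$ the statement reduces to Hadwiger's bound $\Delta(1;3)=3$: equipartition the lone mass in $\R^3$ by three hyperplanes, designate these as $H_2,H_3,H_4$, and let $H_1$ be arbitrary. Assume henceforth $q\geq 1$.

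The first step is to construct a continuous $\mathbb{Z}_2^4$-equivariant test map
\[
F\colon (S^d)^4 \longrightarrow U \;:=\; U_{3,4}^{\oplus (2^{q+1}-2)}\oplus U_{3,3}\oplus W,
\]
where $U_{3,3}$ denotes the sum of the $7$ characters $V_{(0,h_2,h_3,h_4)}$ over $(h_2,h_3,h_4)\neq 0$, whose vanishing is equivalent to full equipartition of the remaining mass by $H_2,H_3,H_4$, and where $W$ is a $5$-dimensional auxiliary $\mathbb{Z}_2^4$-representation ensuring $\dim U = 4d$. Following the template set by Theorems~\ref{thm:2} and~\ref{thm:2 orthogonal}, a natural choice for $W$ is a direct sum of orthogonality modules $V_{\mathbf{e}_r+\mathbf{e}_s}$ and bisection modules $V_{\mathbf{e}_i}$, with the exact selection of five summands pinned down by the polynomial identity below.

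By Proposition~\ref{prop:Borsuk-Ulam}, a zero of $F$ is guaranteed once one verifies
\[
p_U \;=\; p_{3,4}^{\,2^{q+1}-2}\cdot p_{3,3}\cdot p_W \;\equiv\; t_1^{d}t_2^{d}t_3^{d}t_4^{d} \pmod{(t_1^{d+1},\ldots,t_4^{d+1})}
\]
in $\mathbb{Z}_2[t_1,\ldots,t_4]$. I would establish this via the exponent-matching strategy of the proof of Theorem~\ref{thm:2}: write $p_{3,4}^{2^{q+1}-2}=p_{3,4}^{2^q}\cdot p_{3,4}^{2^q-2}$ and invoke the Frobenius identity in characteristic~$2$ to write $p_{3,4}^{2^q}=p_{3,4}(t_1^{2^q},\ldots,t_4^{2^q})$, so that each $t_i$-degree contributed by this factor is a multiple of $2^q$ bounded by $7\cdot 2^q$ and the symmetric expansion is indexed by permutations $\sigma\in\mathfrak{S}_4$. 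The new wrinkle relative to the parallel computation for Theorem~\ref{thm:3} at $t=2$ is the additional linear factor $t_2+t_3+t_4$ in $p_{3,3}=p_{2,3}\cdot(t_2+t_3+t_4)$; this is accommodated by dropping a corresponding degree-$1$ factor from the auxiliary polynomial appearing in the Theorem~\ref{thm:3} setup.

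The main obstacle is the explicit polynomial bookkeeping: one must verify that, after all per-variable degree constraints $\deg_{t_i}(p_U)\leq d$ are imposed, exactly one nonzero monomial survives in the quotient and that this monomial is $t_1^d t_2^d t_3^d t_4^d$. The induction on variable index used in the proof of Theorem~\ref{thm:2} should apply here as well: the asymmetry between $t_1$ (which does not appear in $p_{3,3}$) and $t_2,t_3,t_4$ forces $\sigma(1)=1$ in each symmetric sum obtained from the Frobenius factor, after which the remaining variables are handled in turn. The selection of $W$ is constrained by requiring that the degree-$1$ factor dropped relative to the Theorem~\ref{thm:3} setup coincides with one now redundantly available from the enlarged $p_{3,3}$, and the polynomial expansion then produces $t_1^d t_2^d t_3^d t_4^d$ as the unique surviving monomial.
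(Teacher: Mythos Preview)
Your outline follows the paper's approach: the same configuration-space/test-map reduction, the same target $U=U_{3,4}^{\oplus(2^{q+1}-2)}\oplus U_{3,3}\oplus W$ with $\dim W=5$, and the same appeal to Proposition~\ref{prop:Borsuk-Ulam}. However, two points are left genuinely incomplete.

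First, you do not commit to a choice of $W$, and your suggestion that it be a mix of orthogonality and bisection modules is not what the paper does. The paper takes $W=U_{1,2}^{\oplus 2}\oplus U_{1,1}$ (a pure ``cascading bisection'' on $H_3,H_4$), giving $p_W=t_3^2t_4^3$. The choice matters: the polynomial identity is sensitive to which five linear forms you include, and not every $5$-dimensional $W$ will yield $p_U=t_1^d\cdots t_4^d$.

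Second, and more importantly, you assert that ``the induction on variable index used in the proof of Theorem~\ref{thm:2} should apply here as well.'' It does not apply cleanly. After using the reduction $p_{3,4}^m\equiv h_1^m$ from Section~\ref{sec: proof 3} (which you do not invoke, instead carrying all four symmetric summands of $p_{3,4}$), the exponent-forcing argument for $t_1$ works as in Theorem~\ref{thm:2}, but at $t_2$ it branches: there are \emph{two} ways to achieve $\deg(t_2)=d$, corresponding to whether $\tau(4)=2$ for all of the $2^q-2$ factors of $h_1^{2^q-2}$ or for all but one. The second option contributes with multiplicity $2^q-2$, hence vanishes in $\mathbb{Z}_2$; an analogous branching and even-multiplicity cancellation recurs at the $t_3$ stage. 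This is a qualitatively different mechanism from the single-option forcing in Theorem~\ref{thm:2}, and it depends on the parity of $2^q-2$ rather than on pure degree bounds. Without this observation the computation does not close.
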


For example, given three masses $\mu_1,\mu_2,\mu_3$ on $\mathbb{R}^{10}$, then there are four hyperplanes such that any three of them equipartition the first two masses while three of them equipartition the third. This can be compared to the current estimates $7\leq \Delta(3;3)\leq 9$~\cite{MLVZ06} and $12\leq \Delta(3;4)\leq 17$~\cite{MLVZ06} for Question~\ref{quest:G-H-R}. 

\subsection{Computation of the polynomial $p_{3,4}$} 
\label{sec:poly 3}

In order to prove our $\ell=3$ results for Question~\ref{quest:generalized Makeev}, we first calculate $p_{3,4}=r_{1,4}\cdot r_{2,4}\cdot r_{3,4}$.

\begin{proposition} 
\label{prop:p_{3,4}}

$$p_{3,4}=\sum_{\sigma\in \mathfrak{S}_4}  t_{\sigma(1)}^5t_{\sigma(2)}^4t_{\sigma(3)}^3t_{\sigma(4)}^2 +
\sum_{\sigma\in\mathfrak{S}_4}t_{\sigma(1)}^7t_{\sigma(2)}^4t_{\sigma(3)}^2t_{\sigma(4)}^1+
\sum_{\sigma\in \mathfrak{S}_4}t_{\sigma(1)}^6t_{\sigma(2)}^5t_{\sigma(3)}^2t_{\sigma(4)}
+\sum_{\sigma\in \mathfrak{S}_4}t_{\sigma(1)}^6t_{\sigma(2)}^4t_{\sigma(3)}^3t_{\sigma(4)}.$$
\end{proposition}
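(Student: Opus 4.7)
Since $p_{3,4}$ is $\mathfrak{S}_4$-invariant, my plan is to expand it in the basis of monomial symmetric polynomials $m_\lambda$ (the sum over distinct rearrangements of $t_1^{\lambda_1}\cdots t_4^{\lambda_4}$); for strict $\lambda$ this agrees with the $\sum_{\sigma\in\mathfrak{S}_4}$ notation used in the statement. Since $\deg p_{3,4}=14$ and each variable sits in exactly seven of the fourteen linear factors, only partitions $\lambda\vdash 14$ with at most four parts, each at most $7$, can contribute. The first step is to factor $p_{3,4}=p_{2,4}\cdot r_{3,4}$ and invoke the identity $p_{2,4}=m_{(4,3,2,1)}$ derived at the start of Section~\ref{sec:2}. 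To compute $r_{3,4}$ modulo $2$, use that $t_i+t_j+t_k\equiv e_1+t_\ell$ for $\{i,j,k,\ell\}=\{1,2,3,4\}$, where $e_1=t_1+\cdots+t_4$, so
\[
r_{3,4}=\prod_{\ell=1}^{4}(e_1+t_\ell)=\sum_{j=0}^{4}e_1^{4-j}\,e_j\equiv e_1^2e_2+e_1e_3+e_4\pmod 2,
\]
the $j\in\{0,1\}$ terms summing to $2e_1^4\equiv 0$. A short expansion in the monomial basis then yields $r_{3,4}\equiv m_{(3,1,0,0)}+m_{(1,1,1,1)}\pmod 2$.

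It remains to compute $m_{(4,3,2,1)}\cdot(m_{(3,1,0,0)}+m_{(1,1,1,1)})$. The piece $m_{(4,3,2,1)}\cdot m_{(1,1,1,1)}=m_{(4,3,2,1)}\cdot t_1t_2t_3t_4=m_{(5,4,3,2)}$ is immediate, since multiplication by $t_1t_2t_3t_4$ shifts each exponent vector by $(1,1,1,1)$. For $m_{(4,3,2,1)}\cdot m_{(3,1,0,0)}$, I would parametrize each summand $t^{\alpha+\beta}$ by the pair $(u,v):=(\alpha(i),\alpha(j))$, where $\alpha$ is the underlying permutation of $(4,3,2,1)$ and $i,j$ are the positions of $3$ and $1$ in the rearrangement $\beta$ of $(3,1,0,0)$. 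For each of the twelve pairs with $u\neq v$ in $\{1,2,3,4\}$, the twenty-four choices of $\alpha$ produce twenty-four monomials of the single partition shape $\lambda(u,v):=(\{1,2,3,4\}\setminus\{u,v\})\cup\{u+3,\,v+1\}$ and collectively contribute coefficient $|\mathrm{Stab}_{\mathfrak{S}_4}(\lambda(u,v))|$ to $m_{\lambda(u,v)}$; this is odd modulo $2$ precisely when $\lambda(u,v)$ has distinct parts. An enumeration shows $\lambda(u,v)$ is strict for exactly five pairs, namely $(1,4),(2,1),(3,2),(3,4),(4,3)$, yielding respectively $(5,4,3,2),(5,4,3,2),(6,4,3,1),(6,5,2,1),(7,4,2,1)$. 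The two appearances of $(5,4,3,2)$ cancel modulo $2$, giving $m_{(4,3,2,1)}\cdot m_{(3,1,0,0)}\equiv m_{(6,4,3,1)}+m_{(6,5,2,1)}+m_{(7,4,2,1)}$, and combining with the earlier $m_{(5,4,3,2)}$ recovers the four-term formula.

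The hard part is this last enumeration: one must verify that the seven pairs $(u,v)$ producing a non-strict $\lambda(u,v)$ (for example $(1,2)\mapsto(4,4,3,3)$ or $(4,2)\mapsto(7,3,3,1)$) indeed contribute zero modulo $2$, and that the five strict cases are catalogued correctly. The parity shortcut that non-strict shapes vanish must itself be justified: for fixed $(u,v)$ the twenty-four values of $\alpha$ hit each rearrangement of $\lambda(u,v)$ exactly $|\mathrm{Stab}_{\mathfrak{S}_4}(\lambda(u,v))|$ times, an even count whenever $\lambda(u,v)$ has a repeated part. Once this observation is in place, the case analysis is pruned from eleven candidate partitions down to the five $(u,v)$ pairs above.
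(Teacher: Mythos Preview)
Your proof is correct and follows essentially the same route as the paper's: both factor $p_{3,4}=p_{2,4}\cdot r_{3,4}$, use the Vandermonde identity $p_{2,4}=m_{(4,3,2,1)}$, compute $r_{3,4}=\prod_\ell (e_1+t_\ell)$ to obtain $r_{3,4}\equiv m_{(3,1,0,0)}+m_{(1,1,1,1)}$ (the paper writes this as $\sum_{i\neq j}t_i^3t_j+t_1t_2t_3t_4$), and then multiply while discarding contributions with a repeated exponent. The one organizational difference is that where the paper simply says ``an explicit examination of each product $\sum_\sigma t_{\sigma(1)}^4t_{\sigma(2)}^3t_{\sigma(3)}^2t_{\sigma(4)}\cdot t_i^3t_j$'' and reports the outcome, you package that same twelve-case check via the $(u,v)=(\alpha_i,\alpha_j)$ parametrization and the stabilizer-count argument; this is a cleaner bookkeeping device but not a different idea.
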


\begin{proof}[Proof of Proposition~\ref{prop:p_{3,4}}] 

Considering $r_{3,4}=\prod_{1\leq i<j<k\leq 4} (t_i+t_j+t_k)$, let $s_j$ denote the $j$-th symmetric polynomial. We have 
$r_{3,4}=\prod_{1\leq j\leq 4}(t_j+s_1)=s_4+s_3s_1+s_2s_1^2+s_1s_1^3+s_1^4=s_4+s_1(s_3+s_2s_1).$
On the other hand, $s_2s_1=(t_1+t_2+t_3+t_4)\cdot \sum_{1\leq i<j\leq 4}t_it_j=\sum_{i\neq j}t_i^2t_j+3\sum_{1\leq i<j<k\leq 4}t_it_jt_k=\sum_{i\neq j}t_i^2t_j+s_3$. Thus $r_{3,4}= t_1t_2t_3t_4 +(t_1+t_2+t_3+t_4)\cdot \sum_{i\neq j} t_i^2t_j=t_1t_2t_3t_4+\sum_{i\neq j}t_i^3t_j$ because each term of the form $t_i^2t_j^2$ or $t_i^2t_jt_k$ occurs twice in $(t_1+t_2+t_3+t_4)\cdot \sum_{i\neq j} t_i^2t_j$. Thus $$p_{3,4}=\sum_{\sigma\in \mathfrak{S}_4}t_{\sigma(1)}^4t_{\sigma(2)}^3t_{\sigma(3)}^2t_{\sigma(4)}\cdot (\sum_{i\neq j} t_i^3 +t_1t_2t_3t_4).$$
As $\sum_\sigma t_{\sigma(1)}^{e_1}t_{\sigma(2)}^{e_2}t_{\sigma(3)}^{e_3}t_{\sigma(4)}^{e_4}=0$ whenever $e_i=e_j$ for $i\neq j$, an explicit examination of each product $\sum_\sigma t_{\sigma(1)}^4t_{\sigma(2)}^3t_{\sigma(3)}^2t_{\sigma(4)} \cdot t_i^3t_j$ shows that $p_{3,4}= 3\sum_\sigma t_{\sigma(1)}^5t_{\sigma(2)}^4t_{\sigma(3)}^3t_{\sigma(4)}^2 +
\sum_\sigma t_{\sigma(1)}^7t_{\sigma(2)}^4t_{\sigma(3)}^2t_{\sigma(4)}^1+
\sum_\sigma t_{\sigma(1)}^6t_{\sigma(2)}^5t_{\sigma(3)}^2t_{\sigma(4)}
+\sum_\sigma t_{\sigma(1)}^6t_{\sigma(2)}^4t_{\sigma(3)}^3t_{\sigma(4)}$, which gives Proposition~\ref{prop:p_{3,4}}.
\end{proof}

\subsection{Proof of Theorems~\ref{thm:3} and~\ref{thm:3 orthogonal}.} 
\label{sec: proof 3}

The proofs of Theorems~\ref{thm:3} and ~\ref{thm:3 orthogonal} are based on the following preliminary observation. Let $m=2^{q+1}-t$, where $1\leq t\leq 2^q$, and let $d=7\cdot 2^q-2t$. Let $h_1=\sum_\sigma t_{\sigma(1)}^5t_{\sigma(2)}^4t_{\sigma(3)}^3t_{\sigma(4)}^2$ and let $h$ denote the remaining sum in $p_{3,4}$. We have $p_{3,4}^m=(h_1^{2^q}+h^{2^q})\cdot p_{3,4}^{2^q-t}$, and it is easily seen that each monomial $t_1^{e_1}t_2^{e_2}t_3^{e_4}t_4^{e_4}$ of $h^{2^q}\cdot p_{3,4}^{2^q-t}$ has some exponent $e_i$ with $e_i\geq 6\cdot 2^q +2^q-t=7\cdot 2^q-t>d$. Thus $$p_{3,4}^m=h_1^m=t_1^mt_2^mt_3^mt_4^m\cdot p_{2,4}^m$$
in $\mathbb{Z}_2[t_1,t_2,t_3,t_4]/(t_1^{d+1},t_2^{d+1}, t_3^{d+1},t_4^{d+1})$. The proofs of Theorem~\ref{thm:3} and ~\ref{thm:3 orthogonal} then reduce to the same computations given in proofs of the $k=4$ case of Theorems~\ref{thm:2} and~\ref{thm:2 orthogonal}, respectively. 

\begin{proof}[Proof of Theorems~\ref{thm:3} and Theorems~\ref{thm:3 orthogonal}]

For Theorem~\ref{thm:3}, let $m=2^{q+1}-t$, where $1\leq t\leq 2^q$, and let $d=7\cdot 2^q -2t.$ As with Theorem~\ref{thm:2}, we shall further specify that $H_i\perp H_j$ for $(i,j)\in\{(1,3),(1,4),(2,4)\}$ and that each of $H_2,H_3,$ and $H_4$ bisects any further measure. Thus $U=U_{3,4}^{\oplus m} \oplus U_{2,3}^{\oplus{t-1}} \oplus \mathcal{O}(\mathcal{S})\oplus U_{1,k-1},$ where $\mathcal{S}=\{(1,3),(1,4),(2,4)\}$. By the discussion above, \begin{align*}
p_U=\sum_\sigma t_{\sigma(1)}^{4\cdot2^q}t_{\sigma(2)}^{3\cdot 2^q}t_{\sigma(3)}^{2\cdot 2^q}t_{\sigma(4)}^{2^q}\cdot (\sum_\tau t_{\tau(1)}^4t_{\tau(2)}^3t_{\tau(3)}^2t_{\tau(1)})^{2^q-t}\cdot (\sum_\phi t_{\phi(2)}^3t_{\phi(3)}^2t_{\phi(3)})^{t-1}\\\cdot (t_1+t_3)(t_1+t_4)(t_2+t_4)\cdot t_1^mt_2^{m+1}t_3^{m+1}t_4^{m+1}.
\end{align*}
The same exponent considerations as in the proof of the $k=4$ case of Theorem~\ref{thm:2} now show that $p_U=t_1^dt_2^dt_3^dt_4^d$, so any $\mathbb{Z}_2^4$-equivariant map $F\colon (S^d)^4\rightarrow U$ has a zero.

As with Theorem~\ref{thm:2 orthogonal}, for Theorem~\ref{thm:3 orthogonal} we impose the ``cascade'' condition that each $\mu_i$ of any $k-1$ additional masses $\mu_1,\ldots, \mu_{k-1}$ is bisected by each of the hyperplanes $H_{i+1},\ldots, H_k$ for all $1\leq i\leq k-1$. We then have  
\begin{align*} p_U=\sum_\sigma t_{\sigma(1)}^{4\cdot2^q}t_{\sigma(2)}^{3\cdot 2^q}t_{\sigma(3)}^{2\cdot 2^q}t_{\sigma(4)}^{2^q}\cdot (\sum_\tau t_{\tau(1)}^4t_{\tau(2)}^3t_{\tau(3)}^2t_{\tau(1)})^{2^q-t}\cdot (\sum_\phi t_{\phi(2)}^3t_{\phi(3)}^2t_{\phi(3)})^{t-2}\\\cdot (\sum_\psi t_{\psi(1)}^3t_{\psi(2)}^2t_{\psi(3)}^1t_{\psi(4)}^0)\cdot t_1^mt_2^{m+1}t_3^{m+2}t_4^{m+3},\end{align*}
so as with the $k=4$ case of  Theorem~\ref{thm:2 orthogonal} we get $p_U=t_1^dt_2^dt_3^dt_4^d$.\end{proof}

\subsection{Proof of Proposition~\ref{prop:3}} 
\label{sec:cascade}

The proof of Proposition~\ref{prop:3} involves a slightly more complicated calculation. As $p_{2,3}=\sum_{\sigma\in \mathfrak{S}_3}t_{\sigma(1)}^3t_{\sigma(2)}^2t_{\sigma(3)}^1$ it is easy seen that $p_{3,3}=p_{2,3}\cdot (t_1+t_2+t_3)=\sum_{\sigma\in\mathfrak{S}_3} t_{\sigma(1)}^4t_{\sigma(2)}^2t_{\sigma(3)}.$ 

\begin{proof}[Proof of Proposition~\ref{prop:3}] Let $q\geq 1$, let $m=2^{q+1}-2$, and let $d=7\cdot 2^q -4$. We consider the representation $U=U_{3,4}^{\oplus m}\oplus U_{3,3}\oplus U_{1,2}^{\oplus 2}\oplus U_{1,1}$, where as before  $U_{1,2}^{\oplus 2}\oplus U_{1,1}$ arises from a cascading bisection condition on $H_3$ and $H_4$ for three additional masses. As $p_{3,4}^m=(\sum_\sigma t_{\sigma(1)}^5t_{\sigma(2)}^4t_{\sigma(3)}^3t_{\sigma(4)}^2)^m$, we have 
$p_U=\sum_\sigma t_{\sigma(1)}^{5\cdot 2^q}t_{\sigma(2)}^{4\cdot 2^q}t_{\sigma(3)}^{3\cdot 2^q}t_{\sigma(4)}^{2\cdot 2^q}\cdot (\sum_\tau t_{\tau(1)}^5t_{\tau(2)}^4t_{\tau(3)}^3t_{\tau(4)}^2)^{2^q-2}\cdot \sum_\phi t_{\phi(2)}^4t_{\phi(3)}^2t_{\phi(4)}\cdot t_3^2t_4^3.$
Degree considerations force $\sigma(1)=\tau(4)=1$ for each $\tau$, so
$$p_U=t_1^d\cdot \sum_\sigma t_{\sigma(2)}^{4\cdot 2^q}t_{\sigma(3)}^{3\cdot 2^q}t_{\sigma(4)}^{2\cdot 2^q}\cdot (\sum_\tau t_{\tau(2)}^5t_{\tau(3)}^4t_{\tau(4)}^3)^{2^q-2}\cdot \sum_\phi t_{\phi(2)}^4t_{\phi(3)}^2t_{\phi(4)}\cdot t_3^2t_4^3.$$ 
We must have $\sigma(2)=2$, since otherwise $\deg(t_i)\geq 4\cdot 2^q+3\cdot (2^q-2)+1+2>d$.   Moreover, $\deg(t_2)\geq 7\cdot 2^q-12+8+1>d$ if $\tau(4)\neq 2$ for more than one of the $\tau$. We see therefore that $\deg(t_2)=d$ if and only if either (i) $\tau(4)=2$ for all $\tau$ and $\phi(3)=2$, or (provided $q>1$) that (ii) $\tau(4)=2$ for all but one $\tau$, $\tau(3)=2$ for exactly one $\tau$, and $\phi(4)=2$. As there are $2^q-2$ choices of $\tau$ in option (ii), this case contributes $(2^q-2)t_1^dt_2^d \cdot \sum_\sigma t_{\sigma(3)}^{3\cdot 2^q}t_{\sigma(4)}^{2\cdot 2^q}\cdot (\sum_\tau t_{\tau(3)}^5t_{\tau(4)}^4)^{2^q-3}\cdot \sum_\rho t_{\rho(3)}^5t_{\rho(4)}^3\cdot \sum_\phi t_{\phi(3)}^4t_{\phi(4)}^2\cdot t_3^2t_4^3=0$ to the sum. We therefore have
$$p_U=t_1^dt_2^d\cdot \sum_\sigma t_{\sigma(3)}^{3\cdot 2^q}t_{\sigma(4)}^{2\cdot 2^q}\cdot (\sum_\tau t_{\tau(3)}^5t_{\tau(4)}^4)^{2^q-2}\cdot \sum_\phi t_{\phi(3)}^4t_{\phi(4)}\cdot t_3^2t_4^3.$$ Considering $\deg(t_{\sigma(3)})$, we have as before that $\deg(t_{\sigma(3)})\geq 3\cdot 2^q+4\cdot (2^q-4)+10+1+2>d$ if $\tau(4)\neq \sigma(3)$ for more than one of the $\tau$. It follows that $\deg(t_{\sigma(3)})=d$ if and only if (i) $\tau(4)=\sigma(3)$ for all $\tau$ and $\phi(4)=\sigma(3)$ (in which case $\sigma(3)=4$), or else that (ii) $\tau(4)=\sigma(3)$ for all but one of the $\tau$, $\tau(3)=\sigma(4)$ for exactly one $\tau$, and $\phi(4)=\sigma(4)$ (in which case $\sigma(3)=3$). Option (ii) contributes $(2^q-2)\cdot t_1^dt_2^dt_3^dt_4^d=0$ to the sum, while option (i) gives $t_1^dt_2^dt_3^dt_4^d$, so $p_U=t_1^dt_2^dt_3^dt_4^d$ as desired.
\end{proof}

\section{Transversal Extensions}
\label{sec:transversal} 

As in~\cite{FMSS22}, our transversal generalization of Theorem~\ref{thm:upper} is ultimately derived from a topological Radon-type intersection theorem for which the configuration-space/test-map scheme essentially reduces to that used for Question~\ref{quest:generalized Makeev} and to which we may again apply ~Proposition~\ref{prop:Borsuk-Ulam}.

\subsection{Hyperplane Transversals via Radon-type theorems}

Let $\Delta_{n-1}$ denote the (geometric realization) of the $(n-1)$-dimensional simplex on the vertex set $[n]=\{1,2,\ldots, n\}$. By a Radon-type intersection theorem, we mean a result which prescribes conditions under which any continuous map $f\colon \Delta_{n-1}\rightarrow \R^d$ admits pairs $(\sigma^+,\sigma^-)$ of disjoint faces of $\Delta_{n-1}$ with $f(\sigma^+)\cap f(\sigma^-)=\emptyset$. As in ~\cite{FMSS22}, we call such faces \textit{Radon pairs}. For example, the topological Radon theorem ~\cite{BB79} shows that any continuous map $f\colon \Delta_{n-1}\rightarrow \R^d$ is guaranteed at least one Radon pair provided $n\geq d+2$. By restricting to \textit{linear} mappings (i.e., those sending convex sums of vertices to the convex sum of their images), this result gives the classical Radon's theorem~\cite{Ra21}: any finite set of at least $d+2$ points in $\R^d$ can be partitioned into disjoint subsets whose convex hulls intersect. 

Before stating the relevant Radon-type theorem for our context, it will be convenient to utilize terminology from the theory of hypergraphs. To that end, let $\F$ be a family of non-empty sets and $r\ge 2$  an integer. The $r$-uniform  Kneser hypergraph   $\KG^r(\F)$ of~$\F$ is defined by letting the vertices be the sets of ~$\F$ and letting the hyperedges consist of all collections $\{A_1, \dots, A_r\}$ of  pairwise disjoint sets of $\F$. The chromatic number ~$\chi(\KG^r(\F))$ is the minimum number of colors required so that the vertices of $\KG^r(\F)$ can all be colored without a monochromatic hyperedge. Thus $\chi(\KG^r(\F)) \leq m$ means precisely that $\F$ can be partitioned into $m$ families $\F = \F_1 \cup \dots \cup \F_m$ such that no $r$ sets from any $\F_i$ are pairwise disjoint.  

 By a minimal Radon pair $(\sigma^+, \sigma^-)$ for $f\colon \Delta_{n-1} \to \R^d$, we mean one such that $f(\sigma^\pm ) \cap f(\tau) = \emptyset$ for any proper subface $\tau \subset \sigma^\pm$. The following connection between linear Radon-type results, hyperplane transversals, and hyperplane equipartitions of finite point sets was given in ~\cite[Theorem 3.3.]{FMSS22}:

\begin{theorem}
\label{thm:implications}
Let $c \ge 1$, $d \ge 1$, $k\ge 1$, and $m\ge 1$ be integers. Of the statements below, (\ref{it:radon2}) implies~(\ref{it:affine-piercing}) and ~(\ref{it:affine-piercing}) implies (\ref{it:equipart}).
    \begin{compactenum} 
        \item\label{it:radon2} If $n\ge d+c+2$, then for any linear map $f\colon \Delta_{n-1}\to \mathbb{R}^d$ and for any family $\F$ of subsets of $[n]$ with $\chi(\KG^{2^k}(\F)) \le m$, there are $k$ minimal Radon pairs $(\sigma_1^+,\sigma_1^-),\ldots, (\sigma_k^+,\sigma_k^-)$ for~$f$ such that none of the
        intersections of the form $\sigma_1^\pm \cap \dots \cap \sigma_k^\pm$ contains a set from~$\F$.
        \item\label{it:affine-piercing} For any finite family $\F$ of polytopes in $\mathbb{R}^{c}$ with $\chi(\KG^{2^k}(\F)) \le m$, there are $k$ hyperplanes which pierce $\F$.
        \item\label{it:equipart} For finite point sets $X_1, \dots, X_m \subset \R^c$, there are $k$ hyperplanes $H_1, \dots, H_k$ such that each of the 
        regions $H_1^\pm \cap \dots \cap H_k^\pm$ contains no more than $\frac{1}{2^k}|X_i|$ points of each~$X_i$. 
    \end{compactenum} 
\end{theorem}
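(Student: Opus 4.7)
The plan is to prove the two implications separately, modeled on~\cite{FMSS22}. For the easier direction (2) $\Rightarrow$ (3), I would turn the equipartition problem into a piercing problem by packaging the point sets into a family of convex hulls. Given finite point sets $X_1,\ldots,X_m \subset \R^c$, for each $i$ let $\F_i = \{\conv(A) : A \subseteq X_i,\ |A| = \lfloor |X_i|/2^k \rfloor + 1\}$ and set $\F = \bigcup_i \F_i$. Then $2^k$ pairwise disjoint members of any $\F_i$ would be the convex hulls of $2^k$ pairwise disjoint subsets of $X_i$, each of size strictly more than $|X_i|/2^k$---impossible. So the $m$-coloring by origin index certifies $\chi(\KG^{2^k}(\F)) \leq m$, and (2) yields $k$ piercing hyperplanes $H_1,\ldots,H_k$. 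After a small general-position perturbation so that no $X_i$-point lies on any $H_j$, I would argue by contradiction: if some closed region $\bigcap_j H_j^{\epsilon_j}$ contained more than $|X_i|/2^k$ points of $X_i$, those points would include an $A$ with $\conv(A) \in \F_i$ sitting entirely inside the corresponding open orthant and hence disjoint from every $H_j$, contradicting piercing.

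For (1) $\Rightarrow$ (2), I would enumerate all vertices appearing in any polytope of $\F$ as $v_1,\ldots,v_n \in \R^c$, padding with auxiliary generic points if needed so that $n \geq d+c+2$ (choosing the parameter $d$ appropriately, e.g., $d=c$). Define the linear map $f\colon \Delta_{n-1} \to \R^d$ by extending $i \mapsto v_i$. Pass from $\F$ to a family $\F'$ of vertex sets $V(P) \subseteq [n]$; the chromatic hypothesis transfers from $\F$ to $\F'$ because pairwise disjoint polytopes have pairwise disjoint vertex sets. Apply (1) to produce $k$ minimal Radon pairs $(\sigma_j^+,\sigma_j^-)$ for $f$ with the property that no $V(P)$ is contained in any intersection $\bigcap_j \sigma_j^{\epsilon_j}$. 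The geometric heart of this step is to convert each such Radon pair into a hyperplane $H_j \subset \R^c$ whose induced $\{+,-\}$-labeling of the $v_i$ realizes the abstract Radon structure on $\sigma_j^+ \cup \sigma_j^-$: minimality forces $f(\sigma_j^+) \cap f(\sigma_j^-)$ to reduce to a single transverse intersection point $p_j$ in the relative interiors of both faces, and a generic hyperplane through $p_j$ (coupled with a perturbation of vertices outside $\sigma_j^+\cup\sigma_j^-$) achieves the desired sign structure. The condition ``no $V(P) \subseteq \bigcap_j \sigma_j^{\epsilon_j}$'' then translates into the piercing condition ``no $V(P)$ lies in a single open orthant of the arrangement $(H_1,\ldots,H_k)$'', i.e., some $H_j$ meets each $P \in \F$.

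The hard part will be this final conversion step in (1) $\Rightarrow$ (2). Unlike the classical separating-hyperplane paradigm, a Radon pair has \emph{intersecting} images, so no strict separator exists; the argument must instead exploit transversality at the unique intersection point of a minimal Radon pair, combined with a general-position perturbation ensuring that vertices outside $\sigma_j^+ \cup \sigma_j^-$ receive compatible labels and that the induced orthant structure genuinely refines the combinatorial structure from (1). By contrast, the implication (2) $\Rightarrow$ (3) is essentially a direct combinatorial packaging argument together with a standard perturbation to generic position.
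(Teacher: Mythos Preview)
The paper does not itself prove Theorem~\ref{thm:implications}: it is quoted from \cite[Theorem~3.3]{FMSS22}, and the paper explicitly defers to \cite[Section~3]{FMSS22} for the argument. So there is no in-paper proof to compare against beyond that citation.

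Your $(2)\Rightarrow(3)$ is the right construction, but drop the perturbation step: once $(2)$ hands you specific piercing hyperplanes you cannot move them without risking the piercing conclusion, and you cannot move the $X_i$ without changing the problem. Argue directly that an \emph{open} orthant containing more than $|X_i|/2^k$ points of $X_i$ would yield a member of $\F_i$ disjoint from every $H_j$; this matches the ``open half-space'' formulation of the discrete Ham Sandwich theorem recalled earlier in the paper.

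Your $(1)\Rightarrow(2)$ has a genuine gap. First, $c$ and $d$ are both \emph{fixed} in the hypothesis; you are not free to ``choose $d=c$''. The map $f$ must land in $\R^d$ while the polytope vertices $v_i$ live in $\R^c$, and some construction bridging the two dimensions is required --- this is exactly where the constraint $n\ge d+c+2$ should enter, and your sketch never uses it. Second, even granting $d=c$ and $f(i)=v_i$, the conversion step cannot work. You need the translation ``$V(P)$ lies in an open orthant $\Rightarrow V(P)\subseteq\bigcap_j\sigma_j^{\epsilon_j}$'', which forces $\{i:v_i\in(H_j^{\epsilon_j})^\circ\}\subseteq\sigma_j^{\epsilon_j}$ for each $j$. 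But a minimal Radon pair for your $f$ uses only $c+2$ vertices, so the remaining $n-c-2\ge c$ vertices would all have to lie \emph{on} $H_j$ --- impossible for generic points. And even restricted to $\sigma_j^+\cup\sigma_j^-$, no hyperplane puts $\sigma_j^+$ on one side and $\sigma_j^-$ on the other, since their convex hulls intersect by the very definition of a Radon pair (concretely: for four points in convex position in $\R^2$ the minimal Radon pair is the two diagonals, and no line realises that sign pattern). The mechanism in \cite{FMSS22} is not ``hyperplane through the Radon point'' but a Gale-type duality: one defines $f$ from the Gale transform of $(v_1,\dots,v_n)$, so that minimal Radon partitions on the dual side correspond to cocircuits on the primal side, i.e.\ to genuine hyperplanes in $\R^c$ together with the induced $\pm$ pattern on the $v_i$. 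Your sketch misses this duality, which is the actual content of the implication.
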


If one replaces $\chi(\KG^{2^k})$ with $\chi(\KG^{2^\ell})$ and also replaces condition ~(\ref{it:radon2}) with the condition that none of the intersections $\sigma_{i_1}^\pm \cap \dots \cap \sigma_{i_\ell}^\pm$ contains a set from $~\F$ for any choice of $1\leq i_1<i_2<\cdots i_\ell\leq k$, the identical proof to that given in ~\cite[Section 3]{FMSS22} immediately yields the following Makeev-type version of Theorem~\ref{thm:implications}:

\begin{proposition}
\label{prop:implications}
    Let $c \ge 1$, $d \ge 1$, $k\ge 1$, $1\leq \ell \leq k$, and $m\ge 1$ be integers. Of the statements below, (\ref{it:radon}) implies~(\ref{it:transverse}) and (\ref{it:transverse}) implies (\ref{it:makeev}).
    \begin{compactenum} 
        \item\label{it:radon} If $n\ge d+c+2$, then for any linear map $f\colon \Delta_{n-1}\to \mathbb{R}^d$ and for any family $\F$ of subsets of $[n]$ with $\chi(\KG^{2^\ell}(\F)) \le m$, there are $k$ minimal Radon pairs $(\sigma_1^+,\sigma_1^-),\ldots, (\sigma_k^+,\sigma_k^-)$ for~$f$ such that  none of the 
        intersections of the form $\sigma_{i_1}^\pm \cap \dots \cap \sigma_{i_\ell}^\pm$ contains a set from~$\F$ for any choice of $1\leq i_1<i_2<\cdots<i_\ell\leq k$. 
        \item\label{it:transverse} For any finite family $\F$ of polytopes in $\mathbb{R}^{c}$ with $\chi(\KG^{2^\ell}(\F)) \le m$, there are $k$ hyperplanes, any $\ell$ of which pierce $\F$.
        \item\label{it:makeev} For finite point sets $X_1, \dots, X_m \subset \R^c$, there are $k$ hyperplanes $H_1, \dots, H_k$ such that each
        region of the form $H_{i_1}^\pm \cap \dots \cap H_{i_\ell}^\pm$ contains no more than $\frac{1}{2^\ell}|X_i|$ points of each~$X_i$ for any choice of $1\leq i_1<i_2<\cdots < i_\ell\leq k$.
    \end{compactenum} 
\end{proposition}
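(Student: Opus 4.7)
The proof plan is to follow the identical argument given in \cite[Section 3]{FMSS22} for Theorem~\ref{thm:implications}, making throughout the uniform substitution of $2^k$ by $2^\ell$ in the Kneser hypothesis and replacing ``each region $H_1^\pm \cap \cdots \cap H_k^\pm$'' by ``each region of the form $H_{i_1}^\pm \cap \cdots \cap H_{i_\ell}^\pm$ for $1 \leq i_1 < \cdots < i_\ell \leq k$'' in the conclusions. The two implications are handled separately.

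For \textbf{(1) $\Rightarrow$ (2)}, I would start with a finite family $\F$ of polytopes in $\R^c$ satisfying $\chi(\KG^{2^\ell}(\F)) \leq m$. Collect all vertices of the polytopes into $V = \{v_1,\dots,v_n\} \subseteq \R^c$ and identify each polytope $P \in \F$ with the subset $A_P \subseteq [n]$ indexing its vertices; if needed, enlarge $V$ with auxiliary points so that $n \geq d + c + 2$ for the $d$ prescribed by the construction in \cite{FMSS22}. Define a linear map $f\colon \Delta_{n-1} \to \R^d$ accordingly and apply (\ref{it:radon}) to $\{A_P\}_{P \in \F}$. The resulting $k$ minimal Radon pairs $(\sigma_j^+,\sigma_j^-)$ produce hyperplanes $H_1,\dots,H_k \subset \R^c$ exactly as in \cite{FMSS22}: the relative-interior intersection of $\conv f(\sigma_j^+)$ and $\conv f(\sigma_j^-)$ lies on some hyperplane $H_j$ whose open half-spaces strictly separate $f(\sigma_j^+)$ and $f(\sigma_j^-)$. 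The assumption that no $\sigma_{i_1}^\pm \cap \cdots \cap \sigma_{i_\ell}^\pm$ contains any $A_P$ then translates, via convexity of each polytope, to the statement that no polytope lies entirely in a single open cell of the subarrangement $\{H_{i_1},\dots,H_{i_\ell}\}$, which is precisely the piercing conclusion.

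For \textbf{(2) $\Rightarrow$ (3)}, given finite point sets $X_1,\dots,X_m \subset \R^c$, I would form $\F = \F_1 \cup \cdots \cup \F_m$ where $\F_i$ consists of the convex hulls of all subsets of $X_i$ of cardinality $\lfloor |X_i|/2^\ell \rfloor + 1$. A pigeonhole count shows that no $2^\ell$ members of any $\F_i$ can be pairwise disjoint (their union would have strictly more than $|X_i|$ elements), so $\chi(\KG^{2^\ell}(\F)) \leq m$. Applying (\ref{it:transverse}) yields $k$ hyperplanes, any $\ell$ of which pierce $\F$. If some region $H_{i_1}^\pm \cap \cdots \cap H_{i_\ell}^\pm$ contained strictly more than $|X_i|/2^\ell$ points of $X_i$, those points would include an element of $\F_i$ lying entirely inside the region, contradicting piercing by $H_{i_1},\dots,H_{i_\ell}$.

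The main obstacle is the translation step in (\ref{it:radon}) $\Rightarrow$ (\ref{it:transverse}): verifying that the minimal Radon pair data really produces a hyperplane whose combinatorial sign pattern on $V$ coincides with the partition induced by $\sigma_j^\pm$, so that ``$A_P \not\subseteq \sigma_{i_1}^\pm \cap \cdots \cap \sigma_{i_\ell}^\pm$'' becomes ``$P$ is not contained in a single open cell of $H_{i_1},\dots,H_{i_\ell}$''. Minimality is essential here to guarantee that $H_j$ avoids all vertices strictly; once that is in place, the $\ell$-wise restriction is a tautological restriction of indices, and no new topological input beyond that in \cite{FMSS22} is required since Radon pairs are produced independently for each $j$.
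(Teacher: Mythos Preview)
Your proposal is correct and matches the paper's treatment exactly: the paper does not give a separate proof of this proposition but simply observes that the identical argument of \cite[Section 3]{FMSS22} goes through once one replaces $\chi(\KG^{2^k})$ by $\chi(\KG^{2^\ell})$ and restricts the intersection condition to all $\ell$-element subcollections of the $k$ Radon pairs. Your sketch of the two implications, including the role of minimality in producing hyperplanes with the correct sign pattern and the pigeonhole construction for $(2)\Rightarrow(3)$, is precisely the content of that cited argument with those substitutions made.
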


We now state our desired Radon-type result, from which Theorem~\ref{thm:transversal} follows via Proposition~\ref{prop:implications}.

\begin{proposition}
\label{prop:Radon}   Let $m=2^{q+1}-t$, where $d\geq 1$, $q\geq 0$, and $1\leq t\leq 2^q$ are integers.
Suppose that $n\ge d+U(m;\ell,k)+1$, where
\begin{compactenum}[(a)]
    \item $U(m;\ell,k)=2^q\cdot (k+1)-t$ if 
 $\ell=2$ and
 \item $U(m;\ell,k)=7\cdot 2^q -2t$ if $(\ell,k)=(3,4)$.
\end{compactenum} 
If $\F$ is a family of subsets of $[n+1]$ with $\chi(\KG^{2^\ell}(\F)) \le m$, then for any continuous map $f\colon\Delta_n\rightarrow \R^d$ there exist $k$ Radon pairs $(\sigma_1^+, \sigma_1^{-}), \dots, (\sigma_k^+, \sigma^{-}_k)$ for~$f$ such that no intersection of the form $\sigma_{i_1}^\pm \cap \dots \cap \sigma_{i_\ell}^\pm$ contains a set from~$\F$ for any $1\leq i_1<i_2<\ldots<i_\ell \leq k$. 
\end{proposition}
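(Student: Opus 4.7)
The plan is to extend the configuration-space/test-map paradigm of Sections~\ref{sec:topology}--\ref{sec: proof 3} to the Radon setting along the lines of the proof of Theorem~\ref{thm:implications}(1) in~\cite{FMSS22}. Fix $n\ge d+U(m;\ell,k)+1$ and take the configuration space to be $(S^n)^k$, identified coordinatewise with the $k$-fold power of the deleted $2$-join $(\Delta_n)^{*2}_\Delta$ (which is $\mathbb{Z}_2$-homeomorphic to $S^n$ under the antipodal action). A point $(x_1,\ldots,x_k)\in (S^n)^k$ encodes $k$ ordered pairs $(\sigma_i^+,\sigma_i^-)$ of disjoint faces of $\Delta_n$ together with barycentric weights and split parameters $\alpha_i\in[0,1]$, and the free $\mathbb{Z}_2^k$-action (antipodal on each factor) swaps $\sigma_i^+\leftrightarrow\sigma_i^-$ in the $i$th coordinate. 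The goal is to construct a continuous $\mathbb{Z}_2^k$-equivariant test map $\Psi\colon(S^n)^k\to W$ whose zeros are precisely the $k$-tuples of Radon pairs for $f$ that additionally satisfy the Kneser transversal property of the proposition.

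The target $W$ splits as $W=V^R\oplus V^K$. The Radon part $V^R=\bigoplus_{i=1}^k V_{\mathbf{e}_i}^{\oplus(d+1)}$ is built from the join extension of $f$ in each factor, augmented by the ``midpoint'' coordinate $2\alpha_i-1$ so that a zero forces a proper Radon pair $f(p_i)=f(q_i)$ with $p_i\in\sigma_i^+$, $q_i\in\sigma_i^-$; each summand is $\mathbb{Z}_2^k$-equivariant and together they contribute polynomial factor $(t_1\cdots t_k)^{d+1}$. The Kneser part $V^K$ follows the template of~\cite{FMSS22}: using the hypothesis $\chi(\KG^{2^\ell}(\F))\le m$, fix a coloring $c\colon\F\to[m]$ in which no color class contains $2^\ell$ pairwise disjoint sets, and for each color class $\F_j$ define a continuous ``set-indicator mass'' on the vertex set of $\Delta_n$ driven by the members of $\F_j$. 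Applying the Fourier construction of Proposition~\ref{prop:Fourier} to the induced weights on the regions cut out by $(\sigma_i^+,\sigma_i^-)_{i=1}^k$ produces a $\mathbb{Z}_2^k$-equivariant map into $U_{\ell,k}^{\oplus m}$, and a simultaneous zero, combined with the color hypothesis via a Dolnikov-type argument, forces no intersection $\sigma_{i_1}^\pm\cap\cdots\cap\sigma_{i_\ell}^\pm$ to contain a member of $\F$. Adjoining the same auxiliary constraint summands (the $U(\mathcal{S})$ and $U_{1,k-1}$ blocks) that appear in the proofs of Theorems~\ref{thm:2} and~\ref{thm:3}, the resulting target representation has associated polynomial $p_W=(t_1\cdots t_k)^{d+1}\cdot p_{\ell,k}^m\cdot p_{\mathrm{extra}}$.

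To conclude, I would invoke Proposition~\ref{prop:Borsuk-Ulam} with sphere dimension $n$ in place of $d$: it suffices to verify $p_W=t_1^n\cdots t_k^n$ in $\mathbb{Z}_2[t_1,\ldots,t_k]/(t_1^{n+1},\ldots,t_k^{n+1})$. The polynomial identities established in Sections~\ref{sec:2} and~\ref{sec: proof 3} give $p_{\ell,k}^m\cdot p_{\mathrm{extra}}=t_1^{d'}\cdots t_k^{d'}$ in the smaller quotient $\mathbb{Z}_2[t]/(t_1^{d'+1},\ldots,t_k^{d'+1})$ with $d'=U(m;\ell,k)$; multiplying this by $(t_1\cdots t_k)^{d+1}$ sends every error term to a monomial with some variable exponent exceeding $n=d+d'+1$, hence the desired equality $p_W=t_1^n\cdots t_k^n$ holds in the larger quotient. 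Sphere dimensions $n>d+d'+1$ are handled by a straightforward padding of $V^R$ with additional augmenting coordinates.

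The main obstacle will be the careful construction and verification of the Kneser-transversal component of $\Psi$: making rigorous the continuous extension of the set-indicator Fourier test from the discrete face structure of $(\Delta_n)^{*2}_\Delta$ to all of $(S^n)^k$, and carrying out a clean Dolnikov-style deduction that a simultaneous zero together with the coloring hypothesis indeed excludes any member of $\F$ from being contained in an $\ell$-fold $\pm$-intersection. The polynomial computation at the end, by contrast, is a direct consequence of the work already carried out in Sections~\ref{sec:2} and~\ref{sec: proof 3}.
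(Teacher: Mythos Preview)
Your proposal is correct and follows the same architecture as the paper: configuration space $(S^n)^k$ via the deleted join, a test map with a Radon component landing in $U_{1,k}^{\oplus(d+1)}$ and a Kneser--Fourier component landing in $U_{\ell,k}^{\oplus m}$, then padding with extra summands and invoking Proposition~\ref{prop:Borsuk-Ulam}.

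Two implementation details differ. For the Kneser component (your acknowledged obstacle), the paper does not build any ``set-indicator mass''; it works with distances to subcomplexes. For each color class $\F_j$ and each $g\in\mathbb{Z}_2^k$ one sets $K_g^j=\{\sigma:\sigma_1^{g_1}\cap\cdots\cap\sigma_k^{g_k}\notin\F_j\}$ and $d_j(g)=d(\lambda x,K_g^j)$, then takes Fourier coefficients of $d_j$. A short argument (Proposition~\ref{prop: Fourier Radon}) plus the pigeonhole that no $\F_j$ contains $2^\ell$ pairwise disjoint sets shows that vanishing of the $E_{\ell,k}$-coefficients is exactly the transversal condition. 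This is the clean realization of the Dolnikov-style step you were anticipating. For the polynomial verification, the paper does \emph{not} recycle the extras $U(\mathcal S)$, $U_{2,k-1}^{\oplus(t-1)}$, $U_{1,k-1}$ from Theorems~\ref{thm:2} and~\ref{thm:3}; it instead pads with the simpler cascade $U_{1,k-1}^{\oplus t}\oplus\cdots\oplus U_{1,1}^{\oplus t}$, so that $p_U$ is $p_{\ell,k}^m$ times a single monomial and a one-line degree count gives $p_U=t_1^n\cdots t_k^n$. Your alternative---reuse the earlier extras verbatim and multiply the established identity $p_{\ell,k}^m\cdot p_{\mathrm{extra}}=t_1^{d'}\cdots t_k^{d'}$ by $(t_1\cdots t_k)^{d+1}$---is equally valid and has the advantage of reducing the computation entirely to work already done; the paper's choice buys a self-contained and slightly shorter endgame.
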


\begin{proof}[Proof of Theorem~\ref{thm:transversal}] 
Let $d$ be as in the statement of Theorem~\ref{thm:transversal}. As any Radon pair can be made minimal by the removal of unnecessary vertices, Proposition~\ref{prop:Radon} together with Proposition~\ref{prop:implications} shows that any $m$ finite families $\F_1,\ldots, \F_m$ of compact convex sets in $\R^d$ can be pierced by any $\ell$ of some collection of $k$ hyperplanes provided no $2^\ell$ polytopes from any family are pairwise disjoint. Approximating arbitrary compact convex sets with polytopes finishes the proof.
\end{proof} 

As arbitrary masses can be approximated by finite point sets, we likewise have that Proposition~\ref{prop:implications} together with Theorem~\ref{thm:transversal} recovers Theorem~\ref{thm:upper}. 

\subsection{Proof of Proposition ~\ref{prop:Radon}} 

In order to prove Proposition ~\ref{prop:Radon}, we follow the configuration-space/test-map paradigm for multiple Radon pairs as given in ~\cite[Chapter 4]{FMSS22}. As in Section~\ref{sec:2}, we will define our test-maps using a Fourier perspective. 

\subsubsection{Configuration Spaces} As usual in Radon-type intersection theory, we consider the deleted join of $\Delta_n$ defined as follows. Let $\Delta_n^{\ast 2} =\{\sigma \ast \tau\mid \sigma,\tau\subseteq \Delta_n\}$ be the join of the $n$-simplex with itself. The deleted join $\Sigma_n:=(\Delta_n)^{\ast 2}_\Delta=\{\sigma^+\ast\sigma^-\mid \sigma^+\cap\sigma^-=\emptyset\}$ is the sub-complex of $\Delta_n^{\ast 2}$ consisting of all formal convex sums of the form $\lambda x^+ + (1-\lambda) x^-$ where $x^+ \in \sigma^+$, $x^-\in \sigma^-$, and $\sigma^+$ and $\sigma^-$ are disjoint. There is a free $\mathbb{Z}_2$-action on $\Sigma_n$ given by interchanging each $\lambda x^+ + (1-\lambda) x^-$ with $(1-\lambda)x^- +\lambda x^+$. As a complex, $\Sigma_n$ can be realized as the boundary of the $(n+1)$-dimensional cross--polytope, which is a topological $n$-dimensional sphere, and under this identification the action described above corresponds to the antipodal action on the sphere.

For any $k\ge 2$, we consider the $k$-fold product $$\Sigma_n^{\times k}=\{\sigma=(\sigma_1^+\ast \sigma_1^-,\cdots, \sigma_k^+\ast \sigma_k^-)\mid \sigma_i^+\cap \sigma_i^-=\emptyset\,\,\text{for all}\,\, 1\leq i \leq k\}$$ of the deleted join. There is then a free $\mathbb{Z}_2^k$ action on $\Sigma_n^{\times k}$ given by interchanging each pair of disjoint faces. Observing that the $(n+1)$-dimensional cross-polytope is the unit sphere in the $\ell_1$-metric on $\R^{n+1}$ and taking the product metric on $\Sigma_n^{\times k}$ with the $\ell_1$-metric on each $\Sigma_n$ factor, we see that this action is by isometries. Extending the identification of $\Sigma_n$ with $S^n$ thereby gives a natural equivariant correspondence between $(S^n)^k$ and $\Sigma_n^k$.

\subsubsection{Test Spaces and Test Maps} 

Let $n=d+c+1$ for some constant $c$. To obtain $k$ Radon pairs for a given continuous map $f:\Delta_n\rightarrow \R^d$, we define $R\colon\Sigma_n^{\times k}\rightarrow U_{1,k}^{\oplus (d+1)}$ by $$R(\lambda x)=(\lambda_1-1/2,\lambda_1f(x_1^+)-(1-\lambda_1)f(x_1^-),\ldots,\lambda_k-1/2, \lambda_k f(x_k^+)-(1-\lambda_k)f(x_k^-))$$  for each $\lambda x\colon =(\lambda_1x_1^++(1-\lambda_1)x_1^-,\ldots,\lambda_kx_k^++(1-\lambda_k)x_k^-)\in \Sigma_n^{\times k}$. It is easily confirmed that $R$ is $\mathbb{Z}_2^k$-equivariant. 

Now suppose that $\F$ is a family of subsets of the vertex set of $\Delta_n$ such that~$\chi(\KG^{2^\ell}(\F)) \le m$ and consider a partition $\F=\F_1 \cup \dots \cup \F_m$ such that no $2^\ell$ subsets of any of the $\F_i$ are pairwise disjoint. For each $\sigma=(\sigma_1^+\ast \sigma_1^-,\ldots, \sigma_k^+\ast\sigma_k^-)\in \Sigma_n^{\times k}$, we write $\sigma_i^0$ for $\sigma_i^+$ and $\sigma_i^1$ for~$\sigma_i^-$. For any $1\leq j\leq m$ and any $g=(g_1,\ldots, g_k)\in \mathbb{Z}_2^k$, we let $$K^j_{g}=\left\{\sigma\in \Sigma_n^{\times k}\mid \sigma_1^{g_1}\cap\cdots \cap \sigma_k^{g_k} \notin \F_j \right\}.$$ Without loss of generality, we may assume that each $\F_j$ is closed under taking supersets and therefore that each $K^j_g$ is a simplicial complex. 

As in Section~\ref{sec:topology}, we let $E_{\ell,k}=\{h=(h_1,\ldots, h_k)\in \mathbb{Z}_2^k \mid 1\leq h_1+\cdots +h_k\leq \ell\}$. For any $1\leq i_1<i_2<\cdots <i_\ell<k$, we again let $\mathbb{Z}_2^\ell=\langle \mathbf{e}_{i_1},\ldots, \mathbf{e}_{i_\ell}\rangle$ and let $\mathbb{Z}_2^{k-\ell}$ be its orthogonal complement. For each $g\in \mathbb{Z}_2^\ell$, let $$L_g^j=\{\sigma\in \Sigma_n^{\times k}\mid \sigma_{i_1}^{g_1}\cap \cdots \cap \sigma_{i_\ell}^{g_\ell} \notin \F_j\}.$$  As $L_g^j=\cap_{g'\in \mathbb{Z}_2^{k-\ell}}K_{g+g'}^j$, each $L_g^j$ is a simplicial complex, as is  $$L^j:=\cap_{g\in E_{\ell,k}} L_g^j.$$ By construction, the complex $L=\cap_{j=1}^m L^j$ thus consists of all $\lambda x\in \sigma$ such that $\sigma_{i_1}^\pm \cap \cdots \sigma_{i_\ell}^\pm\notin \F$ for any choice of $1\leq i_1<\ldots<i_\ell\leq k$. 

For each $\lambda x\in \Sigma_n^{\times k}$, consider the distance maps $d_j\colon \mathbb{Z}_2^k\rightarrow \R$ given by $$d_j(g)=d(\lambda x, K_g^j)$$ and their Fourier expansions $$d_j=\sum_{h\in \mathbb{Z}_2^k}c_h(d_j)\chi_h.$$ One has the following analogue of Proposition~\ref{prop:Fourier}:

\begin{proposition}
\label{prop: Fourier Radon} Let $\lambda x\in \Sigma_n^{\times k}$. Then $\lambda x\in L$ if and only if $c_h(d_j)=0$ for all $h\in E_{\ell,k}$ and all $1\leq j\leq m$. 
\end{proposition}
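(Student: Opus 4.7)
The plan is to imitate the proof of Proposition~\ref{prop:Fourier}, substituting the distance function $d_j$ for the mass function $f_i$ and membership in $L$ for the equipartition condition. For each $\ell$-subset $\{i_1,\ldots, i_\ell\}\subseteq [k]$ with corresponding subgroup $\mathbb{Z}_2^\ell \leq \mathbb{Z}_2^k$ and complement $\mathbb{Z}_2^{k-\ell}$, I would introduce the pushforward distance
$$d_j^\ell \colon \mathbb{Z}_2^\ell \to \R_{\ge 0}, \qquad d_j^\ell(g) \,=\, \sum_{g'\in \mathbb{Z}_2^{k-\ell}} d_j(g+g'),$$
the direct analogue of the partial mass $f_i^\ell$ appearing in Proposition~\ref{prop:Fourier}. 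Since $L_g^j = \bigcap_{g'} K_{g+g'}^j$ and each $K_h^j$ is closed, non-negativity of $d_j$ immediately gives $\lambda x \in L_g^j$ iff $d_j^\ell(g)=0$; hence $\lambda x \in L$ is equivalent to the vanishing of each $d_j^\ell$ on $\mathbb{Z}_2^\ell$, as $j$ and the $\ell$-subset range over all choices.

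The second step is the Fourier calculation, entirely parallel to the one in the proof of Proposition~\ref{prop:Fourier}: exploiting that $\chi_h$ is trivial on $\mathbb{Z}_2^{k-\ell}$ whenever $h$ is supported on the chosen $\ell$-subset yields the scaling identity $c_h(d_j^\ell)=2^{k-\ell}\,c_h(d_j)$ for each $h\in \mathbb{Z}_2^\ell$ identified with its extension to $\mathbb{Z}_2^k$ by zero. Combining this with the first step via Fourier inversion produces the desired equivalence: the vanishing of $d_j^\ell$ is equivalent to the vanishing of all of its Fourier coefficients, which the scaling identity translates into the vanishing of $c_h(d_j)$ for every $h$ supported on the chosen $\ell$-subset. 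Letting the $\ell$-subset vary sweeps out precisely the index set $E_{\ell,k}$.

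The subtlety I anticipate is proper handling of the trivial character $h=0$, which is excluded from $E_{\ell,k}$ yet naturally appears in the Fourier inversion applied to each $d_j^\ell$. Resolving this requires using the non-negativity of $d_j^\ell$ together with the deleted-join structure: once the higher Fourier modes of $d_j^\ell$ vanish the function is constant on $\mathbb{Z}_2^\ell$, and showing that the constant must be zero uses that at least one sign pattern renders the corresponding $\ell$-intersection outside $\F_j$ for the configurations of interest. Apart from this bookkeeping, the argument is a formal transcription of the proof of Proposition~\ref{prop:Fourier}.
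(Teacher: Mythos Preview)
Your proposal is correct and follows essentially the same route as the paper's proof: define $d_j^\ell(g)=\sum_{g'\in\mathbb{Z}_2^{k-\ell}}d_j(g+g')$, observe that $\lambda x\in L_g^j$ iff $d_j^\ell(g)=0$, transfer the Fourier calculation from Proposition~\ref{prop:Fourier} to get $c_h(d_j^\ell)$ a nonzero scalar multiple of $c_h(d_j)$, and handle the trivial character by showing some $d_j^\ell(g)$ must vanish. You correctly anticipate the one genuinely new step beyond Proposition~\ref{prop:Fourier}: the constant value of $d_j^\ell$ is forced to be zero because the $2^\ell$ intersections $\sigma_{i_1}^\pm\cap\cdots\cap\sigma_{i_\ell}^\pm$ are pairwise disjoint (this is where the deleted-join condition $\sigma_i^+\cap\sigma_i^-=\emptyset$ enters), so if all lay in $\F_j$ one would have $2^\ell$ pairwise disjoint members of $\F_j$, contradicting the coloring hypothesis $\chi(\KG^{2^\ell}(\F))\le m$; it would strengthen your write-up to name this hypothesis explicitly rather than leaving it implicit in ``for the configurations of interest.''
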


\begin{proof}[Proof of Proposition~\ref{prop: Fourier Radon}]
Let $\mathbb{Z}_2^\ell$ be the subgroup corresponding to a choice of $1\leq i_1<\cdots <i_\ell\leq k$ and  let $\mathbb{Z}_2^{k-\ell}$ be its complement. For each $1\leq j \leq m$, define $d_j^\ell\colon \mathbb{Z}_2^\ell \rightarrow \R$ by $$d_j^\ell(g)=\sum_{g'\in \mathbb{Z}_2^{k-\ell}}d_j(g+g').$$ For any $g\in \mathbb{Z}_2^\ell$ we have $L_g^j=\cap_{g'\in \mathbb{Z}_2^{k-\ell}}K_{g+g'}^j$, so $\lambda x\in L_g^j$ if and only if  $d_j^\ell(g)=0$. Thus $\lambda x\in L^j$ if and only if $d_j^\ell$ is the zero map for each choice of $\mathbb{Z}_2^\ell$. 

As in the proof of Proposition~\ref{prop:Fourier}, consider the Fourier expansion $$d_j^\ell=\sum_{h\in \mathbb{Z}_2^\ell} c_h(d_j^\ell)\chi_h{_{\mid \mathbb{Z}_2^\ell}}.$$ As $d_j^\ell(g)=\sum_{g'\in \mathbb{Z}_2^{k-\ell}}d_j(g+g')$ for each $g\in \mathbb{Z}_2^\ell$, the same argument as in the proof of Proposition~\ref{prop:Fourier} shows that $c_h(d_j^\ell)=2^{\ell-k}c_h(d_j)$. Thus each $d_j^\ell$ is a constant map if and only if $c_h(d_j)=0$ for all $h\in E_{\ell,k}$. We now show that any $d_j^\ell$ must vanish on some $g(j)\in \mathbb{Z}_2^\ell$, which completes the proof. To see this, consider the various intersections $\sigma_{i_1}^\pm\cap\cdots \cap \sigma_{i_\ell}^\pm$, which are pairwise disjoint. If $\lambda x\notin L_g^j$ for all $g\in \mathbb{Z}_2^\ell$, then each of these intersections would lie in the family $\F_j$. As $\F_j$ only contains non-empty sets, this would mean that  $\F_j$ contains $2^\ell$ pairwise disjoint sets, a contradiction. Thus $d_j^\ell(g(j))=0$ for some $g(j)\in \mathbb{Z}_2^\ell$. 
\end{proof} 

As with Question~\ref{quest:generalized Makeev}, we define $\mathcal{D}_{j,h}\colon \Sigma_n^{\times k}\rightarrow U_{\ell,k}$ by $$\mathcal{D}_{j,h}(\lambda x) =\sum_{g\in \mathbb{Z}_2^k}d(\lambda x, K_g^j)\chi_h(g)$$ for each $h\in E_{\ell,k}$ and each $1\leq j\leq m$. For each $\sigma\in \Sigma_n^{\times k}$, we have $g \sigma \cap \sigma=\emptyset$ for all  $g\in \Z_2^k\setminus\{0\}$. As the $\mathbb{Z}_2^k$-action on $\Sigma_n^{\times k}$ is by isometries, it is a straightforward exercise to verify that each $\mathcal{D}_{j,h}$ is $\mathbb{Z}_2^k$-equivariant, and therefore so is $$\mathcal{D}=\oplus_{j=1}^m \oplus_{h\in E_{\ell,k}}\mathcal{D}_{j,h}\colon \Sigma_n^{\times k}\rightarrow U_{\ell,k}^{\oplus m}.$$ 

We now give our test map for Proposition~\ref{prop:Radon}. Let $\iota\colon (S^n)^k\rightarrow \Sigma_n^{\times k}$ be the natural equivariant identification of $(S^n)^k$ with $\Sigma_n^{\times k}$ and let \begin{equation}\label{eqn:F'} F' = (\mathcal{D}\oplus R)\circ \iota\colon (S^n)^k \rightarrow U_{\ell,k}^{\oplus m} \oplus U_{1,k}^{\oplus (d+1)}. \end{equation} By construction, the zeros of $F'$ correspond precisely to those $k$ Radon pairs $(\sigma_1^+,\sigma_1^-),\ldots, (\sigma_k^+,\sigma_k^-)$ for which no intersection $\sigma_{i_1}^\pm\cap \cdots \cap \sigma_{i_\ell}^\pm$ contains a set of ~$\F$ for any choice of $1\leq i_1<\cdots<i_\ell\leq k$. Thus Proposition~\ref{prop:Radon} follows if any $\mathbb{Z}_2^k$-map $F'$ can be guaranteed a zero when $c\geq U(m;\ell,k)$. 

\begin{proof}[Proof of Proposition~\ref{prop:Radon}]
Let $n=d+U(m;\ell,k)+1$ and let $$U=U_{\ell,k}^{\oplus m}\oplus U_{1,k}^{\oplus (d+1)}\oplus U_{1,k-1}^{\oplus t}\oplus U_{1,k-2}^{\oplus t}\oplus \cdots \oplus U_{1,1}^{\oplus t}.$$ We show that any equivariant map $F\colon (S^n)^k\rightarrow U$ has a zero via Proposition~\ref{prop:Borsuk-Ulam}, and in particular so must the map $F'\colon (S^n)^k\rightarrow U_{\ell,k}^{\oplus m} \oplus U_{1,k}^{\oplus (d+1)}$ above.  

When $\ell=2$ and $k$ is arbitrary, $U(m;2,k)=(k+1)\cdot 2^q -t$. We have 
$p_U=\sum_{\sigma\in \mathfrak{S}_k}t_{\sigma(1)}^{2^q k}t_{\sigma(2)}^{2^q(k-1)} \cdots t_{\sigma(k)}^{2^q}\cdot (\sum_{\tau\in \mathfrak{S}_k}t_{\tau(1)}^kt_{\tau(2)}^{k-1}\cdots t_{\tau(k)})^{2^q-t}\cdot t_1^{d+1}t_2^{d+t+1}t_3^{d+t+2}\cdots t_k^{d+t+k}$.  Degree considerations immediately imply that $\sigma(i)=i=\tau(k-i+1)$ for all $1\leq i \leq k$ and all $\tau$, so  $p_U=t_1^n\cdots t_k^n$. 

When $\ell=3$ and $k=4$, $U(m;3,4)=7\cdot 2^q-2t$. As $p_{3,4}^m=t_1^mt_2^mt_3^mt_4^m\cdot (\sum_\sigma t_{\sigma(1)}^4t_{\sigma(2)}^3t_{\sigma(3)}^2t_{\sigma(4)})^m$,  $p_U=(\sum_\sigma t_{\sigma(1)}^{4\cdot2^q}t_{\sigma(2)}^{3\cdot 2^q}t_{\sigma(3)}^{2\cdot 2^q}t_{\sigma(4)}^{2^q})\cdot (\sum_\tau t_{\tau(1)}^4t_{\tau(2)}^3t_{\tau(3)}^2t_{\tau(1)})^{2^q-t}\cdot t_1^{m+d+1}t_2^{m+d+2}\cdots t_k^{m+d+k}.$ It is again immediate that $p_U=t_1^nt_2^nt_3^nt_4^n$. This completes the proof. 
\end{proof} 

\section{Appendix} 

We conclude with a verification of the following estimates  mentioned in the introduction. 

\begin{proposition}
\label{prop:perp}$\newline$
\vspace*{-.2in}
\begin{compactenum}[(a)]
\item $5\leq \Delta^\perp(1;3/4)\leq 7$ and
\item $7\leq \Delta^\perp(1;3/5)\leq 9$
\end{compactenum} 
    \end{proposition}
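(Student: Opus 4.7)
The lower bounds follow immediately from Proposition~\ref{prop:lower} with $m=1$: for (a), $4\,\Delta^\perp(1;3/4)\ge (2^3-1)(4-3+1)+\binom{4}{2}=20$ gives $\Delta^\perp(1;3/4)\ge 5$, and for (b), $5\,\Delta^\perp(1;3/5)\ge 7\cdot 3+\binom{5}{2}=31$ gives $\Delta^\perp(1;3/5)\ge 7$.

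For the upper bounds, the plan is to apply the ideal-valued Borsuk--Ulam criterion---the natural generalization to arbitrary $\mathbb{Z}_2^k$-representations of the polynomial result stated in Section~\ref{sec:polynomial} (see~\cite{BK12} and~\cite{Si19})---to the $\mathbb{Z}_2^k$-equivariant test map $\mathcal{F}\oplus\pi\colon(S^d)^k\to U_{3,k}\oplus U(\mathcal{O})$ of~(\ref{eqn:orthogonality}). Since zeros of this map correspond to $k$ pairwise orthogonal hyperplanes any three of which equipartition $\mu$, it suffices to show that the associated polynomial $p_U=p_{3,k}\cdot r_{2,k}=r_{1,k}\cdot r_{2,k}^{\,2}\cdot r_{3,k}$ is nonzero in $\mathbb{Z}_2[t_1,\ldots,t_k]/(t_1^{d+1},\ldots,t_k^{d+1})$ for $(k,d)=(4,7)$ and $(k,d)=(5,9)$.

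For the computation, the Vandermonde expansion $r_{2,k}=\sum_{\sigma\in\mathfrak{S}_k}t_{\sigma(1)}^{k-1}t_{\sigma(2)}^{k-2}\cdots t_{\sigma(k)}^0$ combined with the Frobenius identity $(\sum a_i)^2=\sum a_i^2$ over $\mathbb{Z}_2$ yields $r_{1,k}\cdot r_{2,k}^{\,2}=\sum_{\sigma\in\mathfrak{S}_k}t_{\sigma(1)}^{2k-1}t_{\sigma(2)}^{2k-3}\cdots t_{\sigma(k)}^{1}$, a sum of $k!$ distinct monomials whose exponent vectors are the permutations of $(2k-1,2k-3,\ldots,1)$. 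In case~(a), I would then multiply $\sum_\sigma t_{\sigma(1)}^{7}t_{\sigma(2)}^{5}t_{\sigma(3)}^{3}t_{\sigma(4)}$ by $r_{3,4}=t_1t_2t_3t_4+\sum_{i\neq j}t_i^{3}t_j$ (from the proof of Proposition~\ref{prop:p_{3,4}}) and reduce modulo $(t_i^{8})$. The $t_1t_2t_3t_4$ part raises $t_{\sigma(1)}$ to exponent $8$ and vanishes; in the $\sum_{i\neq j}t_i^{3}t_j$ part, exponent bookkeeping shows that only the four choices $(i,j)\in\{(\sigma(3),\sigma(2)),(\sigma(3),\sigma(4)),(\sigma(4),\sigma(2)),(\sigma(4),\sigma(3))\}$ yield surviving products, whose exponent vectors are permutations of $(7,6,6,1)$, $(7,5,6,2)$, $(7,6,3,4)$, and $(7,5,4,4)$ respectively. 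The first and last types have repeated exponents, so each distinct monomial in their sums over $\mathfrak{S}_4$ appears an even number of times and vanishes modulo $2$; the middle two have pairwise distinct exponents and contribute $24+24=48$ distinct monomials with coefficient $1$. Hence $p_U\neq 0$, and $\Delta^\perp(1;3/4)\le 7$ follows.

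Case~(b) proceeds identically in spirit, starting from $r_{1,5}\cdot r_{2,5}^{\,2}=\sum_{\pi\in\mathfrak{S}_5}t_{\pi(1)}^{9}t_{\pi(2)}^{7}t_{\pi(3)}^{5}t_{\pi(4)}^{3}t_{\pi(5)}$ and multiplying by $r_{3,5}=\prod_{1\le i<j<k\le 5}(t_i+t_j+t_k)$ modulo $(t_i^{10})$. For each $\pi$, survival forces the contributed exponent vector $(b_1,\ldots,b_5)$ from $r_{3,5}$ to satisfy $b_{\pi(j)}\le 2(j-1)$ and $\sum b_i=10$, with each $b_i\le 6$. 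The main obstacle is that $r_{3,5}$ is a product of ten linear forms rather than four, so its monomial expansion is considerably richer; identifying which $b$-vectors are attainable as a choice of one variable per factor, discarding exponent types with repeated values (which cancel modulo $2$ by the same symmetry argument as in case~(a)), and confirming that the remaining contributions with pairwise distinct exponents sum to a nonzero result will require a larger but in-principle-similar case analysis, yielding $\Delta^\perp(1;3/5)\le 9$.
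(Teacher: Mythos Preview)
Your approach is sound and, for part~(a), essentially complete. The lower bounds via Proposition~\ref{prop:lower} are exactly what the paper uses. For the upper bound in~(a), both you and the paper reduce to the identity $r_{1,4}\cdot r_{2,4}^{\,2}=\sum_\sigma t_{\sigma(1)}^7t_{\sigma(2)}^5t_{\sigma(3)}^3t_{\sigma(4)}$ and then multiply by $r_{3,4}$ modulo $(t_i^8)$; the only difference is that the paper further multiplies by the cascade monomial $t_2t_3^3t_4^4$ so as to invoke Proposition~\ref{prop:Borsuk-Ulam} in its stated ``$p_U=t_1^d\cdots t_k^d$'' form, whereas you stop once nonvanishing is exhibited. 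These are equivalent, since in the Artinian ring $\mathbb{Z}_2[t_1,\ldots,t_k]/(t_1^{d+1},\ldots,t_k^{d+1})$ an element is nonzero iff some monomial multiple of it equals the socle generator $t_1^d\cdots t_k^d$; your surviving type $\{3,4,6,7\}$ is precisely the monomial $t_1^7t_2^6t_3^4t_4^3$ that the paper's cascade monomial isolates.

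For part~(b) your outline is correct but the promised ``larger case analysis'' is where the real work lies, and the paper organizes it differently in a way worth noting. Rather than expand $r_{3,5}$ directly, the paper again augments by the cascade monomial $t_2t_3^2t_4^3t_5^4$; the top exponent $9$ at $t_{\sigma(1)}$ then forces $\sigma(1)=1$ and, crucially, kills every $t_1$ in the six triple-sums containing it, so that $t_1^9\cdot r_{3,5}=t_1^9\cdot r_{2,4}\cdot r_{3,4}$ in the remaining variables. This reduces the five-variable problem to a four-variable one governed by the explicit formula for $p_{3,4}$ (Proposition~\ref{prop:p_{3,4}}), after which four short cases each yield $t_2^9t_3^9t_4^9t_5^9$, summing to $5\equiv 1\pmod 2$. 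Your direct route would recover the same answer---indeed, the paper's computation shows the coefficient of $t_1^9t_2^8t_3^7t_4^6t_5^5$ in your $p_U$ is $1$---but without the cascade-forced reduction to $p_{3,4}$ you would have to either compute the relevant coefficients of $r_{3,5}$ from scratch or rediscover this reduction.
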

 
\begin{proof}[Proof of Proposition~\ref{prop:perp}(a)] As in the proof of Proposition~\ref{prop:3}, to derive the upper bound $\Delta^\perp(1;3/4)\leq 7$ we augment the relevant representation and consider $U=U_{3,4}\oplus U(\mathcal{O})\oplus U_{1,3}\oplus U_{1,2}^{\oplus 2}\oplus U_{1,1}$. Again, we show that any continuous equivariant map $F\colon (S^7)^4\rightarrow U$ has a zero via Proposition~\ref{prop:Borsuk-Ulam}. 

As $p_{3,4}=t_1t_2t_3t_4\cdot r_{2,4}\cdot r_{3,4}$ and $p_{\mathcal{O}}=r_{2,4}$, we have $p_U=t_1t_2t_3t_4\cdot p_{2,4}^2\cdot r_{3,4}\cdot t_2t_3^3t_4^4$ and so
$p_U=\sum_\sigma t_{\sigma(1)}^7t_{\sigma(2)}^5t_{\sigma(3)}^3t_{\sigma(4)}^1\cdot r_{3,4}\cdot t_2t_3^3t_4^4.$
We have $\sigma(1)=1$ by degree considerations. As $r_{3,4}=\prod_{2\leq i<j\leq 4}(t_1+t_i+t_j)\cdot (t_2+t_3+t_4)$ and $t_1^7\cdot \prod_{2\leq i<j\leq 4}(t_1+t_i+t_j)=t_1^d \cdot r_{2,4}$, 
$$p_U=t_1^7\cdot \sum_\sigma t_{\sigma(2)}^6t_{\sigma(3)}^4t_{\sigma(4)}^2\cdot \sum_\tau t_{\tau(2)}^2 t_{\tau(3)}^1t_{\tau(4)}^0\cdot (t_2+t_3+t_4)\cdot t_3^2t_4^3.$$
\noindent By degree considerations it is immediate that $\sigma(2)=2$, and we have $\deg(t_2)=7$ only if either (1) $\tau(3)=2$ or (2) $\tau(4)=2$. As option (1) yields $\sum_\sigma t_{\sigma(3)}^4t_{\sigma(4)}^2\cdot \sum_\tau t_{\tau(3)}^2 t_{\tau(4)}^0\cdot t_3^2t_4^2=\sum_\sigma t_{\sigma(3)}^8t_{\sigma(4)}=0$, we therefore have
$p_U=t_1^7t_2^6\cdot \sum_\sigma t_{\sigma(3)}^6t_{\sigma(4)}^4\cdot \sum_\tau t_{\tau(3)}^2 t_{\tau(4)}^1\cdot (t_2+t_3+t_4)\cdot t_4$. Again degree considerations force $\sigma(3)=\tau(4)=3$, so $p_U=t_1^7t_2^7t_3^7t_4^7.$ \end{proof}

For the proof of $\Delta^\perp(1;3/5)\leq 9$, we use the full expansion of $p_{3,4}$ given by Proposition~\ref{prop:p_{3,4}}. 

\begin{proof}[Proof of Proposition~\ref{prop:perp}(b)]

We show that any equivariant map $F\colon (S^9)^4\rightarrow U\colon=U_{3,5}\oplus U(\mathcal{O})\oplus U_{1,4}\oplus U_{1,3}\oplus U_{1,2}\oplus U_{1,1}$ has a zero. As in the proof of Theorem~\ref{cor:explicit perp}(d), we have $p_U=t_1t_2t_3t_4t_5\cdot r_{2,5}^2\cdot r_{3,5}\cdot t_2t_3^2t_4^3t_5^4,$ so
$p_U=\sum_\sigma t_{\sigma(1)}^9t_{\sigma(2)}^7t_{\sigma(3)}^5t_{\sigma(4)}^3t_{\sigma(5)}^1\cdot r_{3,5}\cdot t_2t_3^2t_4^3t_5^4.$

Again, degree considerations force $\sigma(1)=1$. As $r_{3,5}=\prod_{2\leq i<j\leq 5}(t_1+t_i+t_j)\cdot r_{3,4}$, we have $t_1^9\cdot r_{3,5}=t_1^9\cdot r_{2,4}\cdot r_{3,4}.$ 
Thus $$p_U=t_1^9\sum_\sigma t_{\sigma(2)}^7t_{\sigma(3)}^5t_{\sigma(4)}^3t_{\sigma(5)}^1\cdot p_{3,4}\cdot t_3t_4^2t_5^3.$$
\noindent We now show that $\sum_\sigma t_{\sigma(2)}^7t_{\sigma(3)}^5t_{\sigma(4)}^3t_{\sigma(5)}^1\cdot p_{3,4}\cdot t_3t_4^2t_5^3=5t_2^9t_3^9t_4^9t_5^9$, which gives $p_U=t_1^9t_2^9t_3^9t_4^9t_5^9$ as desired. To that end, let $q=\sum_\sigma t_{\sigma(2)}^7t_{\sigma(3)}^5t_{\sigma(4)}^3t_{\sigma(5)}^1\cdot t_3t_4^2t_5^3$, and let $p_1=\sum_{\sigma}  t_{\sigma(2)}^5t_{\sigma(3)}^4t_{\sigma(4)}^3t_{\sigma(5)}^2$, $p_2=\sum_{\sigma}t_{\sigma(2)}^7t_{\sigma(3)}^4t_{\sigma(4)}^2t_{\sigma(5}^1$, $p_3=\sum_{\sigma}t_{\sigma(2)}^6t_{\sigma(3)}^5t_{\sigma(4)}^2t_{\sigma(5)}$, and $p_4=\sum_{\sigma}t_{\sigma(2)}^6t_{\sigma(3)}^4t_{\sigma(4)}^3t_{\sigma(5)}$ be the summands of $p_{3,4}=p_1+p_2+p_3+p_4$.\\
\\
$\bullet$ For $q\cdot p_1=\sum_\sigma t_{\sigma(2)}^7t_{\sigma(3)}^5t_{\sigma(4)}^3t_{\sigma(5)}^1\cdot \sum_\tau t_{\tau(2)}^5t_{\tau(3)}^4t_{\tau(4)}^3t_{\tau(5)}^2\cdot t_3t_4^2t_5^3$, degree considerations immediately force $\sigma(2)=\tau(5)=2$, $\sigma(3)=\tau(4)=3$, and $\sigma(4)=\tau(3)=4$, so $q\cdot p_1=t_2^9t_3^9t_4^9t_5^9$.\\
\\
$\bullet$ For $q\cdot p_2=\sum_\sigma t_{\sigma(2)}^7t_{\sigma(3)}^5t_{\sigma(4)}^3t_{\sigma(5)}^1\cdot\sum_{\tau}t_{\tau(2)}^7t_{\tau(3)}^4t_{\tau(4)}^2t_{\tau(5)}^1\cdot t_3t_4^2t_5^3$, we have $\tau(2)=\sigma(5)=3$ by degree considerations, so
$q\cdot p_2=t_3^9\cdot \sum_\sigma t_{\sigma(2)}^7t_{\sigma(4)}^5t_{\sigma(5)}^3\cdot\sum_{\tau}t_{\tau(2)}^4t_{\tau(4)}^2t_{\tau(5)}^1\cdot t_4^2t_5^3$. We now have $\sigma(2)=\tau(4)$, as otherwise $\deg(\sigma(2))\neq 9$, and so $\sigma(2)=2$. Thus $q\cdot p_2=t_2^9t_3^9\cdot \sum_\sigma t_{\sigma(4)}^5t_{\sigma(5)}^3\cdot\sum_{\tau}t_{\tau(4)}^4t_{\tau(5)}^1\cdot t_4^2t_5^3=t_2^9t_3^9t_4^9t_5^9$.\\
\\
$\bullet$ For $q\cdot p_3=\sum_\sigma t_{\sigma(2)}^7t_{\sigma(3)}^5t_{\sigma(4)}^3t_{\sigma(5)}^1\cdot\sum_{\tau}t_{\tau(2)}^6t_{\tau(3)}^5t_{\tau(4)}^2t_{\tau(5)}^1\cdot t_3t_4^2t_5^3$, we have either that (i) $\sigma(2)=\tau(4)=2$ or that (ii) $\sigma(2)=\tau(5)=3$. We show that both cases contribute $t_2^9t_3^9t_4^9t_5^9$. Case (i) gives $t_2^9\cdot \sum_\sigma t_{\sigma(3)}^5t_{\sigma(4)}^3t_{\sigma(5)}^1\cdot\sum_{\tau}t_{\tau(3)}^6t_{\tau(4)}^5t_{\tau(5)}^1\cdot t_3t_4^2t_5^3=t_2^9t_3^9t_4^9t_5^9$ because we must have $\sigma(3)=\tau(5)=5$ and $\sigma(4)=\tau(4)=3$. In case (ii), we must have we must have $\sigma(2)=\tau(5)=4$ in $t_3^9\cdot \sum_\sigma t_{\sigma(2)}^5t_{\sigma(4)}^3t_{\sigma(5)}^1\cdot\sum_{\tau}t_{\tau(2)}^6t_{\tau(4)}^5t_{\tau(5)}^2\cdot t_4^2t_5^3$. This gives $t_3^9t_4^9\cdot \sum_\sigma t_{\sigma(2)}^3t_{\sigma(5)}^1\cdot \sum_\tau t_{\tau(2)}^6t_{\tau(5)}^5\cdot t_5^3$. It now follows that $\sigma(2)=\tau(2)=2$, so again one has $t_2^9t_3^9t_4^9t_5^9$.\\
\\
$\bullet$ Finally, we must have $\sigma(2)=\tau(5)=3$ in $q\cdot p_4=\sum_\sigma t_{\sigma(2)}^7t_{\sigma(3)}^5t_{\sigma(4)}^3t_{\sigma(5)}^1\cdot\sum_{\tau}t_{\tau(2)}^6t_{\tau(3)}^4t_{\tau(4)}^3t_{\tau(5)}^1\cdot t_3t_4^2t_5^3$. Thus $q\cdot p_4=
t_3^9\cdot\sum_\sigma t_{\sigma(2)}^5t_{\sigma(4)}^3t_{\sigma(5)}^1\cdot\sum_{\tau}t_{\tau(2)}^6t_{\tau(4)}^4t_{\tau(5)}^3 \cdot t_4^2t_5^3$, and so we must have $\sigma(2)=\tau(4)=2$. This gives $q\cdot p_4=t_2^9t_3^9\cdot \sum_\sigma t_{\sigma(4)}^3t_{\sigma(5)}^1\cdot\sum_{\tau}t_{\tau(4)}^6t_{\tau(5)}^3 \cdot t_4^2t_5^3$. Continuing we have $\sigma(5)=\tau(4)=4$ and so $q\cdot p_4=t_2^9t_3^9t_4^9t_5^9$. \end{proof}

\section{Acknowledgements}

The authors are grateful to Michael Crabb, from whom we learnt the proof of Proposition~\ref{prop:lower}. The authors also thank Florian Frick for helpful comments.


\begin{thebibliography}{}

\bibitem{Av84} D. Avis, Non-partitionable point sets, \textit{Inform. Process. Letters}, Vol. 19, No. 3 (1984) 125--129.

\bibitem{BB79} E. G. Bajm\'oczy and I. B\'ar\'any. On a common generalization of Borsuk's and Radon's theorem, \textit{Acta Math. Acad. Sci. Hungar.}, Vol. 34 (1979) 347--350. 


\bibitem{BZ04} W. A. Beyer and A. Zardecki. The early history of the ham sandwich theorem. \textit{Amer. Math. Monthly}, Vol. 111, No. 1 (2004) 58--61.


\bibitem{BFHZ16} P. Blagojevi\'c, F. Frick, A. Hasse, and G. Ziegler. Hyperplane mass partitions via relative equivariant obstruction theory, \textit{Doc. Math.}, Vol. 21 (2016) 735--771.

\bibitem{BFHZ18} P. Blagojevi\'c, F. Frick, A. Hasse, and G. Ziegler. Topology of the Gr\"unbaum--Hadwiger--Ramos hyperplane mass partition problem, \emph{Trans. Amer. Math. Soc.}, Vol. 370 (2018), 6795--6824. 

\bibitem{BK12} P. Blagojevi\'c and R. Karasev. Extensions of Theorems of Rattray and Makeev, \textit{Topol. Methods in Nonlinear Anal.}, Vol. 40, No. 1 (2012) 189--213.

\bibitem{Do92} V.L. Dolnikov. A generalization of the ham sandwich theorem, \textit{Math. Notes}, Vol. 52 (1992) 771--779. 

\bibitem{FH98} E. Fadell and S. Husseini. An ideal-valued cohomological index theory with applications to Borsuk--Ulam and Bourgin--Yang theorems, \textit{Ergodic Theory Dynam. Systems}, Vol. 8$^\ast$ (1998) 73--85.


\bibitem{FMSS22} F. Frick, S. Murray, S. Simon, and L. Stemmler. Transversal generalizations of hyperplane equipartitions, arXiv:2210.15423.

\bibitem{Gr60} B. Gr\"unbaum. Partitions of mass-distributions and convex bodies by hyperplanes, \textit{Pacific J. Math.}, Vol. 10 (1960) 1257--1261.

\bibitem{Ha66} H. Hadwiger. Simultane vierteilung zweier k\"orper. \textit{Arch. Math.} (Basel) Vol. 17 (1966) 274--278.


\bibitem{Kl21} J. Kliem. Counting gray codes for an improved bound of the Gr\"unbaum--Hadwiger--Ramos problem. arXiv:2110.07286 [math.CO].

\bibitem{Ma07} V.V. Makeev, Equipartition of a continuous mass distribution, Jour. Math. Sci.  Vol. 140, No. 4 (2007) 551--557. 

\bibitem{MLVZ06} P. Mani--Levitska, S. Vre\'cica, and R. \v Zivaljevi\'c. Topology and combinatorics of partitions of masses by hyperplanes, \textit{Adv. Math.}, Vol. 207 (2006) 266--296.


\bibitem{Ma08} J. Matou\v ek. \textit{Using the Borsuk-Ulam Theorem, Lectures on Topological Methods in Combinatorics and Geometry}, 2nd ed., Universitext. Heidelberg: Springer (2008).

\bibitem{Ra21} J. Radon. Mengen konvexer K\"orper, die einen gemeinsamen Punkt enthalten. \textit{Math. Ann.}, Vol. 83 (1921) 113--115.

\bibitem{Ra96} E. A. Ramos. Equipartition of mass distributions by hyperplanes, \textit{Discrete Comput. Geom.}, Vol. 15 (1996) 147--167.

\bibitem{RS22} E. Rold\'an and P. Sober\'on. A survey of mass partitions, \textit{Bull. Amer. Math. Sco.}, Vol. 59 (2022) 227--267.


\bibitem{Se77} J.P. Serre. \textit{Linear Representations of Finite Groups}. Springer, New York (1977). 

\bibitem{Si19} S. Simon. Hyperplane equipartitions plus constraints, \textit{J. Comb. Theory, Ser.~A}, Vol. 161 (2019) 29--50. 

\bibitem{Si15} S. Simon. Measure equipartitions via finite Fourier Analysis, \textit{Geom. Dedicata}, Vol. 179 (2015) 217--228.

\bibitem{YDEP89} F. Yao, D. Dobkin, H. Edelsbrunner, M. Paterson. Partitioning space for range queries, \textit{SIAM J. Comput.}, Vol. 18, No. 2 (1989) 371--384.

\bibitem{Zi15} R. \v Zivaljevi\'c. Computational topology of equipartitions by hyperplanes,  \textit{Topol. Methods Nonlinear Anal.}, Vol. 45, No. 1 (2015) 63--90. 

\bibitem{Zi17} R. \v Zivaljevi\'c. Topological Methods. Chapter 21 in \textit{Handbook of Discrete and Computational Geometry}, J.E. Goodman, J. O'Rourke, and C. Toth eds, Chapman \& Hall/CRC (2017) 551--580.


\end{thebibliography}
\end{document}